\numberwithin{equation}{section}
\newtheorem{thm}{Theorem}[section]
\newtheorem{cor}[thm]{Corollary}
\newtheorem{lem}[thm]{Lemma}
\newtheorem{rem}[thm]{Remark}
\def\Diff{\mathop{\rm Diff}\nolimits}
\def\GL{\mathop{\rm GL}\nolimits}
\def\O{\mathop{\rm O}\nolimits}
\def\Id{\mathop{\rm Id}\nolimits}
\def\det{\mathop{\rm det}\nolimits}
\def\cop{\mathop{\rm cop}\nolimits}
\newcommand{\ra}{\rightarrow}
\newcommand{\fl}{\forall}
\newcommand{\wt}{\widetilde}
\newcommand{\wg}{\wedge}
\newcommand{\s}{\sigma}
\newcommand{\vars}{\varsigma}
\newcommand{\D}{\Delta}
\newcommand{\Zb}{\mathbb{Z}}
\newcommand{\ot}{\otimes}
\newcommand{\Hc}{\mathcal{H}}
\newcommand{\g}{\gamma}
\newcommand{\vp}{\varphi}
\newcommand{\ve}{\varepsilon}
\newcommand{\Cb}{\mathbb{C}}
\newcommand{\FA}{\mathfrak{A}}
\newcommand{\FN}{\mathfrak{N}}
\newcommand{\Fa}{\mathfrak{a}}
\newcommand{\Fg}{\mathfrak{g}}
\newcommand{\Fh}{\mathfrak{h}}
\newcommand{\Fl}{\mathfrak{l}}
\newcommand{\Fs}{\mathfrak{s}}
\newcommand{\Fo}{\mathfrak{o}}
\newcommand{\p}{\partial}
\def\tb{{\bf t}}
\def\Gb{{\bf G}}
\def\Nbo{{\bf N}}
\def\sb{{\bf s}}
\def\0b{{\bf 0}}
\def\Cb{{\mathbb C}}
\def\Nb{{\mathbb N}}
\def\Rb{{\mathbb R}}
\def\Zb{{\mathbb Z}}
\def\Ac{{\mathcal A}}
\def\Bc{{\mathcal B}}
\def\Cc{{\mathcal C}}
\def\Dc{{\mathcal D}}
\def\Ec{{\mathcal E}}
\def\Fc{{\mathcal F}}
\def\Gc{{\mathcal G}}
\def\Hc{{\mathcal H}}
\def\Ic{{\mathcal I}}
\def\Lc{{\mathcal L}}
\def\Qc{{\mathcal Q}}
\def\Uc{{\mathcal U}}
\def\Vc{{\mathcal V}}
\def\a{\alpha}
\def\d{\delta}
\def\k{\kappa}
\def\lb{\lambda}
\def\g{\gamma}
\def\om{\omega}
\def\s{\sigma}
\def\t{\theta}
\def\ve{\varepsilon}
\def\vp{\varphi}
\def\vp{\varphi}
\def\vars{\varsigma}
\def\D{\Delta}
\def\G{\Gamma}
\def\Om{\Omega}
\def\fl{\forall}
\def\ify{\infty}
\def\nb{\nabla}
\def\ot{\otimes}
\def\part{\partial}
\def\ts{\times}
\def\wdg{\wedge}
\def\ra{\rightarrow}
\def\text{\hbox}
\def\Diff{\mathop{\rm Diff}\nolimits}
\def\End{\mathop{\rm End}\nolimits}
\def\circint{{\oint}}
\def\fl{\forall}
\def\ify{\infty}
\def\nb{\nabla}
\def\ot{\otimes}
\def\ra{\rightarrow}
\def\rt{\rtimes}
\def\lt{\triangleleft}
\def\rt{\triangleright}
\def\lt{\triangleleft}
\def\cl{\blacktriangleright\hspace{-4pt} < }
\def\al{>\hspace{-4pt}\vartriangleleft}
\def\acl{\blacktriangleright\hspace{-4pt}\vartriangleleft }
\def\p{\partial}
\def\wt{\widetilde}
\def\td{\tilde}
\def\0D{\Delta^{(0)}}
\def\1D{\Delta^{(1)}}
\def\dbo{{\bf d}}
\def\tb{{\bf t}}
\def\Gb{{\bf G}}
\def\0b{{\bf 0}}
\def\build#1_#2^#3{\mathrel{
\mathop{\kern 0pt#1}\limits_{#2}^{#3}}}
\newcommand{\ps}[1]{~\hspace{-4pt}_{^{(#1)}}}
\newcommand{\ns}[1]{~\hspace{-4pt}_{_{{<#1>}}}}
\newcommand{\lu}[1]{\;^{#1}\hspace{-2pt}}
\def\odots{\ot\dots\ot}
\def\odots{\ot\cdots\ot}
\def\wdots{\wedge\dots\wedge}
\def\one{{\bf 1}}
\def\odots{\ot\dots\ot}
\def\dbar{{^-\hspace{-7pt}\d}}
\def\gbar{{-\hspace{-9pt}\g}}
\def\etabar{{-\hspace{-7pt}\eta}}
\def\wdots{\wedge\dots\wedge}
\def\one{{\bf 1}}
\def\0D{\Delta^{(0)}}
\def\1D{\Delta^{(1)}}
\def\Db{\blacktriangledown}
\def\dt{\left.\frac{d}{dt}\right|_{_{t=0}}}
\title[Hopf cyclic classes]{Geometric construction of Hopf cyclic characteristic classes}
\begin{document}
%%%%%%%%%%%%%%%%%%%%%%%%%%%%%%%%%%%%%%%%%%%%%%%
\author{Henri Moscovici}
 \address{Department of mathematics,  
The Ohio State University, 
Columbus, OH 43210, USA}
\email{henri@math.osu.edu}

\keywords{Hopf algebras; Hopf cyclic cohomology; characteristic classes of foliations}

\thanks{Supported in part by the National Science Foundation
award DMS-1300548}

%%%%%%%%%%%%%%%%%%%%%%%%%%%%%%%%%%%
\begin{abstract}

In earlier joint work with A. Connes on transverse index theory on foliations,
cyclic cohomology adapted to Hopf algebras has emerged as a decisive tool 
in deciphering the total index class of the hypoelliptic signature operator.
We have found a Hopf algebra $\Hc_n$,
playing the role of a `quantum structure group' for the  
 `space of leaves' of a codimension n foliation, whose 
Hopf cyclic cohomology is
 canonically isomorphic to the Gelfand-Fuks
 cohomology of the Lie algebra of formal vector fields.
However, with a few low-dimensional exceptions, no explicit construction
was known for its Hopf cyclic classes.
This paper provides an effective method 
for constructing the Hopf cyclic cohomology classes
of $\Hc_n$ and of $\Hc_n$ relative to $\O_n$,
in the spirit of the Chern-Weil theory, which completely elucidates their 
relationship with the characteristic classes of foliations. 
  \end{abstract}

 \maketitle
 
 \tableofcontents
 
 \section*{Introduction}

In our joint work with A. Connes 
on the local index formula for transversely elliptic operators on foliations~\cite{CM98},
a certain Hopf algebra $\Hc_n$ turned out to play
the role of a `quantum structure group' for the 
 `space of leaves' of any codimension $n$ foliation. Moreover, the
 Hopf-version of cyclic cohomology, which emerged in the same paper,
was shown for $\Hc_n$ to be
 canonically isomorphic via a quasi-isomorphism of van Est type
 to the Gelfand-Fuks
 cohomology of the Lie algebra of formal vector fields on $\Rb^n$.
This isomorphism furnished the decisive tool
in relating the total index class of the hypoelliptic signature operator~\cite{CM95} to
the characteristic classes of foliations. 
Furthermore, the transplantation of
the Gelfand-Fuks classes in the Hopf cyclic cohomological framework
broadened the scope of their applicability, as illustrated by the 
work on modular Hecke algebras~\cite{CM04}, which gave a `modular' interpretation
to the basic Hopf cyclic cocycles of $\Hc_1$. 
However, apart from $\Hc_1$ (cf. \cite{CM98, MR07}), explicit cocycle
representatives for all Hopf cyclic cohomology classes  
were known only for $\Hc_2$ (cf. \cite{RanSut}).
\smallskip

The present paper provides a geometric method for representing
the Hopf cyclic cohomology classes of $\Hc_n$, and of $\Hc_n$ relative to $\O_n$,
by concrete cocycles, in the spirit of Chern-Weil theory. Besides giving
an effective construction of the Hopf cyclic characteristic classes, this procedure
renders their 
relationship with the characteristic classes of foliations completely transparent. 
\smallskip

In addition to ideas and results
from~\cite{CM98} as well as their subsequent refinements obtained in collaboration with 
B. Rangipour (\cite{MR07, MR09, MR11}), our approach uses as 
key additional ingredients the `differentiable' modification,
defined \`a la Haefliger,
of the Bott bicomplex~\cite{Bott*, BSS} for equivariant cohomology
and of Dupont's simplicial de Rham 
DG-algebra~\cite{Dupont}. 

In their standard version the above complexes
compute the $\Diff (M)^\d$-equivariant cohomology of a manifold. We first show that 
their differentiable counterparts deliver Haefliger's 
differentiable cohomology~\cite{HaefDC}
of the \'etale groupoid associated to the tautological action of $\Diff (M)^\d$,
and thus the geometric characteristic classes of foliations.
We next prove that the quasi-isomorphism of van Est type constructed in~\cite{CM98}
transits through the differentiable  simplicial de Rham DG-algebra before
landing in the differentiable Bott complex.
The former being graded commutative, its cohomology classes can be constructed
by the usual Chern-Weil procedure ~\cite{Cartan}. 
The transition to the Bott complex is effected by 
integration along the fibers. Although not multiplicative, this operation provides
a quasi-isomorphism which offers the advantage
of being explicitly computable. Employing then chain maps (from \cite{CM98} and \cite{MR11}), we transfer the representative cocycles, constructed in terms of connection and curvature,
from the differentiable Bott complex to the
original cyclic model~\cite{CM98} for the Hopf cyclic cohomology of  $\Hc_n$
as well as to the quasi-isomorphic model of Chevalley-Eilenberg type
constructed in~\cite{MR11}. 
\smallskip

The upshot is a concrete construction of bases
for the Hopf cyclic cohomology of $\Hc_n$ and also for $\Hc_n$ relative to $\O_n$,
in both cohomological models mentioned above,
on a par with the classical geometric
construction of characteristic classes of foliations~\cite{Bott-LNM, BottHaef, KambTond}.
In particular this construction provides
``minimal'' representative cocycles for all Hopf cyclic cohomology classes,
reproducing the known feature of the Gelfand-Fuks cohomology 
of being representable by
cocycles involving jets of order no higher than two of the formal vector fields.

\section{Characteristic cocycles in differentiable cohomology} \label{ExpCoc}

  \subsection{Differentiable equivariant cohomology} \label{DEC}
  
Let $M$ be a smooth oriented manifold of dimension $n$, and let
$\Gb = \Diff (M)$ be its group of diffeomorphisms. 
Regarding $\Gb$ as a discrete group,
the equivariant cohomology $H^\bullet_{\Gb} (M, \Rb)$, originally defined by
means of the homotopy quotient as $H^\bullet( E\Gb \ts_{\Gb} M, \Rb)$, 
can be expressed in terms of de Rham complexes. These are
associated to the simplicial manifold 
\begin{align*} 
\triangle_{\Gb} M = \{\triangle_{\Gb} M[p] := 
 \Gb^p \ts M\}_{p\geq 0} \, ,
\end{align*}
with face maps $\p_i : \triangle_{\Gb} M[p] \ra \triangle_{\Gb} M[p-1]$,  \, $ 1 \leq i \leq p$, \,  given by 
\begin{equation*} 
\p_i (\phi_1, \ldots , \phi_p, \, x) = \left\{ \begin{matrix}
&(\phi_2, \ldots , \phi_p,\,  x) \, , \quad &i=0 \, , \\
&(\phi_1, \ldots , \phi_i \phi_{i+1}, \ldots , \phi_p, x) \, , \quad &1 < i < p \, , \\
&(\phi_1, \ldots , \phi_{p-1}, \, \phi_p (x)) \, , \quad &i=p \, .
\end{matrix} \right.
\end{equation*} 
 and degeneracies
$$
\s_i (\phi_1, \ldots , \phi_p, \, x) = (\phi_1, \ldots , \phi_i , e , \phi_{i+1},\ldots , \phi_p, x) \, ,
\quad 0 \leq i \leq p .
$$
 
The first such complex (cf. \cite{Bott*, BSS}) is the total complex of
the bicomplex
$\{C^\bullet \left(\Gb, \Om^\bullet (M)\right), \d, d \}$ defined as follows: 
$C^p \left(\Gb, \Om^q (M)\right)$ is spanned by cochains  
\begin{align*} 
c (\phi_1, \ldots , \phi_p) \in \Om^q (M) , \qquad  \phi_1, \ldots , \phi_p \in \Gb ,
\end{align*}
$d$ is the de Rham differential, and $\d$ is the  group cohomology boundary  
 \begin{align*} 
 \begin{split}
 \d c (\phi_1, \ldots , \phi_{p+1})\,  = & \sum_{i=0}^{p} (-1)^i  
 c \big(\p_i (\phi_1, \ldots , \phi_{p+1})\big) \\
 & + (-1)^{p+1} 
 \phi_{p+1}^* c (\phi_1, \ldots , \phi_p) .
 \end{split}
 \end{align*}  

Instead of the action groupoid notation implicitly used in the above formulas 
it will be more convenient to work with the homogeneous bicomplex 
$\{\bar{C}^\bullet \left(\Gb, \Om^\bullet (M)\right), \bar{\d}, d \}$. Its 
$(p, q)$-cochains 
\begin{align*} 
\bar{c} (\rho_0 , \ldots , \rho_{p}) \in \Om^q (M) , \qquad  \rho_0 , \ldots , \rho_{p} \in \Gb ,
\end{align*}
satisfy the covariance condition 
\begin{align} \label{cov1}
\bar{c} (\rho_0 \rho, \ldots , \rho_{p} \rho) = \rho^* \bar{c} (\rho_0, \ldots , \rho_p) ,
\quad \forall \, \rho, \rho_i \in \Gb ,
\end{align}
and the group cohomology boundary is given by
 \begin{equation*} 
 \bar{\d} \bar{c} (\rho_0, \ldots , \rho_{p}) =
  \sum_{i=0}^{p} (-1)^i \bar{c} (\rho_0, \ldots , \check{\rho_i},
 \ldots , \rho_{p}) , 
\end{equation*} 
the `check' mark signifying the omission of the element underneath.

The passage between the two isomorphic bicomplexes is via the relations
 \begin{align} \label{xchng}
 \begin{split}
  &c (\phi_1, \ldots , \phi_p) = \bar{c} (\phi_1 \cdots \phi_p ,\,  \phi_2 \cdots \phi_p ,\,
  \ldots , \phi_p, e) \, , \\
  \text{resp.} \quad 
 &\bar{c}(\rho_0, \ldots , \rho_p) = \rho_p^* c(\rho_0 \rho_1^{-1}, \rho_1 \rho_2^{-1} ,
 \ldots , \rho_{p-1} \rho_p^{-1}) .
 \end{split}
 \end{align}
 
\smallskip

The second complex computing $H^\bullet_{\Gb} (M, \Rb)$ is Dupont's
de Rham complex (cf. \cite{Dupont}) of compatible forms 
$\{\Om^\bullet (|\triangle_{\Gb} M|), d \}$
on the geometric realization $|\triangle_{\Gb} M|$. By definition,
\begin{align*} 
\Om^\bullet (|\triangle_{\Gb} M|) \subset \prod_{p=0}^\ify \Om^\bullet( \D^p \ts \triangle_{\Gb} M[p])
\end{align*}
 consists of sequences $\om = \{\om_p\}_{p\geq 0}$, with
 $\om_p \in \Om^\bullet(\D^p \ts \triangle_{\Gb} M[p])$, such that for all morphisms
 $\mu \in \D(p, q)$ in the simplicial category, 
 \begin{align} \label{sform}
 (\mu_\bullet \ts \Id)^\ast \om_q \, = \, 
 (\Id \ts \mu^\bullet )^\ast \om_p \,\in  \Om^\bullet \left(\D^p \ts \triangle_{\Gb} M[q]\right) .
\end{align}
Here \, $
\D^p = \{\tb = (t_0, \ldots , t_p) \in \Rb^{p+1} \, \mid \, t_i \geq 0, \quad t_0 + \ldots + t_p =1\} $,
$\mu_\bullet  : \D^p \ra \D^q$ (resp. $\mu^\bullet  : \triangle_{\Gb} M[q] \ra \triangle_{\Gb} M[p] $),
stands for the induced cosimplicial (resp. simplicial) map, and 
$\Om^k(\D^p \ts \triangle_{\Gb} M[q])$ denotes the $k$-forms on $\D^p \ts \triangle_{\Gb} M[q] $ which
are extendable to smooth forms on $V^p  \ts \triangle_{\Gb} M[q]$, where
$V^p = \{\tb = (t_0, \ldots , t_p) \in \Rb^{p+1} \, \mid \,  t_0 + \ldots + t_p =1\} $.
By~\cite[Thm 2.3]{Dupont}, the operation of integration along along the fibers
\begin{align} \label{circint}
\oint_{\D^p}: \Om^{\bullet} (\D^p \ts \triangle_{\Gb} M[p]) \ra 
 \Om^{\bullet -p} (\triangle_{\Gb} M[p])  \,
\end{align}
establishes a quasi-isomorphism
between the complexes  $\{\Om^\bullet (|\triangle_{\Gb} M|), d \}$ and
$\{ C^{\rm tot \bullet} \left(\Gb, \Om^\ast (M)\right), \d \pm d \}$. 
 
 As in the case of the Bott complex, there is a homogeneous description of the
 simplicial de Rham complex, $\{\Om^\bullet ({|\bar\triangle}_{\Gb} M|), d \}$,
 consisting of the $\Gb$-invariant compatible forms on the geometric realization 
  $|\bar\triangle_{\Gb} M|$. The simplicial manifold $\bar\triangle_{\Gb} M$ 
  is defined as follows:
\begin{align*} 
\bar\triangle_{\Gb} M = \{\bar\triangle_{\Gb} M[p] := 
 \Gb^{p+1} \ts M\}_{p\geq 0} \, ,
\end{align*}
with face maps 
$\bar\p_i : \bar\triangle_{\Gb} M[p] \ra \bar\triangle_{\Gb} M[p-1]$,  \, $ 1 \leq i \leq p$, \,  given by 
\begin{equation*} 
\bar\p_i (\rho_0, \ldots , \rho_p, \, x) \, = \, (\rho_0, \ldots , \check{\rho_i},
 \ldots , \rho_{p}) ,  \quad 0 \leq i \leq p ,
\end{equation*} 
 and degeneracies
$$
\bar\s_i (\rho_0, \ldots , \rho_p, \, x) = (\rho_0, \ldots , \rho_i , \rho_i, \ldots , \rho_p, x) \, ,
\quad 0 \leq i \leq p .
$$
 The compatible forms $\om = \{\om_p\}_{p\geq 0} \in \Om^\bullet ({|\bar\triangle}_{\Gb} M|$
 satisfy the invariance condition
  \begin{align} \label{cov2}
 \om (\rho_0, \ldots , \rho_p) = (\rho^{-1})^*\om (\rho_0 \rho, \ldots , \rho_{p} \rho) ,
\quad \forall \, \rho, \rho_i \in \Gb .
\end{align}
\smallskip

For the purposes of this paper, the relevant cohomology is
the \textit{differentiable} modification of the above constructs, 
in the sense of Haefliger~\cite[Ch.4, \S4]{HaefDC}. 

In the case of the Bott complex the modification amounts to pass to 
the subcomplex of differentiable cochains
$\{C_{\rm d}^{\rm tot \bullet} \left(\Gb, \Om^\ast (M)\right), \d \pm d \}$,
resp. $\{ \bar{C}_{\rm d}^{\rm tot \bullet} \left(\Gb, \Om^\ast (M)\right), \d \pm d \}$,
which is defined as follows:
a cochain $\om \in \bar{C}^p \left(\Gb, \Om^q (M)\right)$ 
is {\em differentiable} 
if for any local chart $U \subset M$ with coordinates $(x^1, \ldots , x^n)$,
\begin{align} \label{difco}
\om (\rho_0, \ldots , \rho_p, x) = 
\sum f_I \left(x, j^k_x(\rho_0), \ldots , j^k_x(\rho_p) \right) dx^I ,
\end{align}
with  $f_I$ smooth functions of $x \in U$ and the $k$-jets at $x$ of 
$\rho_0, \ldots , \rho_p$, for some $k \in \Nb$, and $dx^I = dx^{i_1} \wg \ldots \wg dx^{i_q}$
with $I = (i_1< \ldots < i_q)$ running through the set of strictly increasing $q$-indices.
 
The cohomology of the total complex 
$\{ \bar{C}_{\rm d}^{\rm tot \bullet} \left(\Gb, \Om^\ast (M)\right), \d \pm d \}$ 
will be denoted  $H_{\rm d, \Gb}^{\bullet}\left(M, \Rb \right)$.
\smallskip

In the case of Dupont's complex, the 
{\em differentiable simplicial de Rham complex } 
is the subcomplex $\{\Om_{\rm d}^\bullet (|\bar\triangle_{\Gb} M|), d \}$ of
$\{\Om^\bullet (|\bar\triangle_{\Gb} M|), d \}$  
consisting of the $\Gb$-invariant compatible forms
$\{\om_p\}_{p\geq 0}$ whose components 
satisfy the analogous condition:
 \begin{align} \label{diffo}
 \begin{split}
\om_p (\tb; \rho_0, \ldots , \rho_p, x) = 
\sum f_{I, J} \left(\tb; x, j^k_x(\rho_0), \ldots , j^k_x(\rho_p) \right) dt^I \wg dx^J ,
 \end{split}
\end{align}
with $f_{I, J}$ smooth in all variables.
 
We denote by $H_{\rm d}^\bullet (|\triangle_{\Gb} M|, \Rb)$ the cohomology of the
differentiable simplicial de Rham complex 
$\{\Om_{\rm d}^\bullet (|\bar\triangle_{\Gb} M|), d\}$.

\begin{thm} \label{difDup}
The chain map $\displaystyle \circint_{\D^\bullet} : \Om_{\rm d}^\bullet (|\bar\triangle_{\Gb} M|) \ra 
\bar{C}_{\rm d}^{\bullet} \left(\Gb, \Om^\ast (M)\right)$ induces an isomorphism
$H_{\rm d}^\bullet (|\triangle_{\Gb} M|, \Rb) \, \cong H_{\rm d, \Gb}^{\bullet}\left(M, \Rb \right)$.
\end{thm}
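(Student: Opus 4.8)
The plan is to follow Dupont's proof of \cite[Thm 2.3]{Dupont} and to verify that every operator occurring in it preserves differentiability. One cannot simply combine Dupont's non-differentiable quasi-isomorphism with the inclusions of the differentiable subcomplexes, since those inclusions are \emph{not} quasi-isomorphisms: the discrepancy between differentiable and continuous cohomology is precisely what makes Haefliger's differentiable cohomology interesting. Recall that Dupont's argument supplies, besides the integration map $I = \oint_{\D^\bullet}$, an extension map $E$ built from the Whitney elementary forms (polynomial in the barycentric coordinates $t_i$ and linear in their differentials $dt_i$) together with a chain homotopy $s$, satisfying $I \circ E = \Id$ and a homotopy $\Id - E \circ I = \pm (d s + s d)$ on the ambient complexes. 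The theorem will follow once I show that $E$ and $s$ restrict to the differentiable subcomplexes $\Om_{\rm d}^\bullet(|\bar\triangle_{\Gb} M|)$ and $\bar{C}_{\rm d}^{\bullet}(\Gb, \Om^\ast(M))$.

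The decisive observation is that the differentiability conditions \eqref{difco} and \eqref{diffo} constrain only how a cochain depends on the point $x \in M$ and on the diffeomorphisms $\rho_0, \ldots, \rho_p$, requiring the latter to enter through finite-order jets $j^k_x(\rho_i)$ with coefficients smooth in $x$ and in the jets. The operators $I$, $E$ and $s$, on the other hand, act purely in the simplicial direction: on the barycentric variables $\tb \in \D^p$, on the forms $dt^I$, and, through the face and degeneracy maps, on the list of group arguments by deletion or repetition, while along $M$ they involve only the universal de Rham and fiber-integration operations. Consequently none of them disturbs the shape prescribed by \eqref{difco} and \eqref{diffo}: the map $I$ replaces a smooth coefficient $f_{I,J}(\tb; x, j^k_x(\rho_0), \ldots)$ by $\oint_{\D^p} f_{I,J}\, dt^I$, still smooth in $x$ and the jets; the extension $E$ multiplies differentiable face-components by Whitney forms polynomial in $(\tb, dt)$; and $s$ integrates over subsimplices. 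In particular the jet order $k$ is never raised by $I$ or $s$, and $E$ only combines the finite orders coming from the face restrictions.

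It remains to check that the auxiliary structure maps respect differentiability. The face maps $\bar\p_i$ delete a group argument and hence cannot raise the jet order; the degeneracies $\bar\s_i$ repeat one; and the passage \eqref{xchng} between the homogeneous and inhomogeneous pictures, together with the covariance and invariance conditions \eqref{cov1} and \eqref{cov2}, is expressed through the pullbacks $\rho^*$ and the group multiplication. Under these, jets transform smoothly, indeed by the chain rule the $k$-jet of a composite $\rho_i \rho_j$ is a polynomial in the jets of $\rho_i$ and $\rho_j$, so \eqref{difco} and \eqref{diffo} are stable throughout. Granting this, Dupont's homotopy equivalence restricts verbatim to the differentiable subcomplexes, and $\oint_{\D^\bullet}$ therefore induces the asserted isomorphism $H_{\rm d}^\bullet(|\triangle_{\Gb} M|, \Rb) \cong H_{\rm d, \Gb}^{\bullet}(M, \Rb)$.

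The main obstacle I anticipate is the verification that $E$ genuinely lands in the differentiable simplicial subcomplex, that is, that its image satisfies the compatibility condition \eqref{sform} and the differentiability condition \eqref{diffo} simultaneously, with a jet order controlled uniformly across the infinitely many simplices, and that $E$ and $s$ remain a homotopy inverse and a homotopy after restriction rather than only on the ambient complexes. I expect to settle this by localizing: over a single nondegenerate simplex in a coordinate chart $U \subset M$ the extension is a finite sum of Whitney forms times the differentiable components of the cochain on the faces, where both conditions are transparent, and then by verifying that the gluing across faces governed by \eqref{sform} preserves the local jet expansion \eqref{diffo}.
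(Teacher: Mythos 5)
Your proposal takes essentially the same approach as the paper: the paper's own proof consists precisely of observing that integration along the fibers maps $\Om_{\rm d}^\bullet (|\bar\triangle_{\Gb} M|)$ into $\bar{C}_{\rm d}^{\bullet} \left(\Gb, \Om^\ast (M)\right)$, and that the chain maps in both directions together with the chain homotopies from the proof of Dupont's Theorem 2.3 preserve the differentiable subcomplexes. Your write-up simply makes explicit the details (Whitney extension, homotopy operator, jet-order bookkeeping) that the paper leaves implicit, including the correct observation that one cannot argue via the inclusions of subcomplexes, since those are not quasi-isomorphisms.
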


\begin{proof}
 Clearly, the integration along the fibers maps
$\Om_{\rm d}^\bullet (|\bar\triangle_{\Gb} M|)$ to
$\bar{C}_{\rm d}^{\bullet} \left(\Gb, \Om^\ast (M)\right)$. Moreover, the
natural chain maps in both directions as well as 
the chain homotopies relating them in the proof of Theorem 2.3 in~\cite{Dupont}
preserve the differentiable subcomplexes.  
\end{proof}

 \smallskip
 
  \subsection{Explicit van Est-Haefliger isomorphism} \label{ExpF}
 
We denote by $F^{k}M$ the frame bundle of order $k \in \Nb \cup \infty$,
 formed of $k$-jets at $0$ of local diffeomorphisms $\phi$ from a neighborhood 
 of $0 \in \Rb^n$ to a neighborhood of $\phi(0) \in M$. 
 Thus, $F^{1}M = FM$ is the usual
 frame bundle, while by definition $F^{\ify}M := \varprojlim F^{k}M$.
 Each $F^{k}M$  is a principal
 bundle over $M$ with structure group $\Gc^k$ formed of $k$-jets at $0$ of 
 local diffeomorphisms of $\Rb^n$ preserving $0$.  
The group $\Gb$ operates
on the left on each $F^{k}M$ by:
 $$
 \phi \cdot j_0^\ify (\rho) := j_0^\ify (\phi \circ \rho) , \qquad \phi \in \Gb, \, \rho \in F^{k}M .
$$
\smallskip
 
Let now $\Fa_n$ be Lie algebra of formal vector fields on $\Rb^n$.
 Any $\,v \in \Fa_n $ can be represented as 
 $\displaystyle \quad v = j_0^\infty \left(\frac{d}{dt} \mid_{t=0} \rho_t\right)$,
 with $\{\rho_t\}_{t \in \Rb}$ a $1$-parameter group of local diffeomorphisms of $\Rb^n$;
 it thus gives rise to a $\Gb$-invariant vector field on $F^{\ify}M$, defined at a point
 $j_0^\infty (\phi) \in F^{\ify}M$ by
$$
 \tilde{v} \mid_{j_0^\infty (\phi)}  = j_0^\infty \left(\frac{d}{dt} \mid_{t=0} (\phi \circ \rho_t ) \right).
 $$
Dually, any $\, \om  \in C^{m}(\Fa_n)$, where
$C^{\ast}(\Fa_n)$ denotes the Gelfand-Fuks cohomology complex \cite{GF} of $\Fa_n$,
 gives rise to a  $\Gb$-invariant
 form $\tilde\om \in \Om^m (F^{\ify}M)$, characterized by
$$
 \tilde\om (\tilde{v}_1 , \ldots , \tilde{v}_m) = \om (v_1, \ldots , v_m) .
 $$ 
Moreover, the assignment 
\begin{equation} \label{invid}
\om \in C^\bullet(\Fa_n) \mapsto \tilde\om \in \Om^\bullet (F^{\ify}M)^\Gb
\end{equation}
is a DGA-isomorphism, by means of which we shall tacitly identify the two DG-algebras.
 \smallskip
  
Choosing a torsion-free affine connection $\nabla$ on $M$, one defines 
a cross-section $\s_{\nabla} : FM \ra F^{\ify}M$
of the natural projection $\pi_1 : F^\ify M \ra FM$,
by the formula
\begin{equation} \label{jcon}
\s_{\nabla} (u) = j_0^{\ify} (\exp_x^{\nabla} \circ u) \ , \qquad u \in F_x M \, .
\end{equation}
This cross-section is clearly ${\rm GL}_n$-equivariant
\begin{equation} \label{Rinv}
\s_{\nabla} \circ R_a = R_a \circ \s_{\nabla} \, , \qquad 
 a \in {\rm GL}_n  \, ,
\end{equation}
as well as Diff-equivariant, 
\begin{equation} \label{nat}
\s_{\nabla^\phi} = {\phi}^{-1} \circ \s_{\nabla}  \circ {\phi} \ , 
\qquad \fl \, \phi \in \Gb  \, .
\end{equation}
Here $\nabla^\phi = \phi_\ast^{-1} \circ \nabla \circ \phi_\ast$, or more precisely
the derivative whose connection form is the pull-back ${\phi}^* (\om_\nabla)$ of
the connection form of $\nabla$.
 \smallskip
 
Let $\bar\triangle_{\Gb} FM$ be the  
simplicial manifold associated to the action of $\Gb$ by prolongation on $FM$.
We define the maps $\s_p : \D^p \ts \bar\triangle_{\Gb} FM[p] \ra F^\ify M$, $p \in \Nb$, 
by the formula
\begin{align} \label{xchng2}
\begin{split}
\s_p (\tb ; \rho_0 , \ldots , \rho_p, u) &= \,
\s_{\nabla (\tb; \rho_0 , \ldots , \rho_p)} (u)  ,\\
\text{where}   \quad
 \nabla (\tb; \rho_0 , \ldots , \rho_p) &= \, \sum_{i=0}^p \ t_i \, \nabla^{\rho_i} , \qquad
 \tb \in \Delta^p .
\end{split}
\end{align}
Manifestly, the collection $\, \hat{\s} = \{\s_p \}_{p \geq 0}$ 
descends to the geometric realization of
 $\bar\triangle_{\Gb} FM$, yielding a well-defined map 
 $\, \hat{\s} : |\bar\triangle_{\Gb} FM| \ra F^\ify M$; moreover, 
 this map is $\GL_n$-equivariant.

 \begin{thm} \label{main1}
 {\rm \bf (1)} Let $\om \in C^\bullet(\Fa_n)$ then
 $\hat{\s}^*(\tilde\om) \in \Om_{\rm d}^\bullet (|\bar\triangle_{\Gb} FM|)$, and the map
 $\,  \Cc_\nabla :  C^\bullet(\Fa_n) \ra \Om_{\rm d}^\bullet (|\bar\triangle_{\Gb} FM|)$,
 defined by
\begin{align} \label{crux}
 \Cc_\nabla (\om )\, = \, 
  \hat{\s}^*(\tilde\om) \in \Om_{\rm d}^\bullet (|\bar\triangle_{\Gb} FM|) ,
\end{align}
 is a quasi-isomorphism of DG-algebras.
 
{\rm \bf (2)}  The map $\, \Cc_\nabla$ is  $\GL_n$-equivariant 
and, by restriction to the subcomplex of \, $\O_n$-basic cochains,
it induces a quasi-isomorphism of DG-algebras  
 $\, \Cc^{\O_n}_\nabla: C^\bullet(\Fa_n, \O_n) \ra 
 \Om_{\rm d}^\bullet  (|\bar\triangle_{\Gb} (PM, \O_n)|)$; here $PM = FM/\O_n$
 and $\O_n$-basic forms on $FM$ are identified with forms on $PM$.
  \end{thm}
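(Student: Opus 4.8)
The plan is first to check that $\Cc_\nabla$ is a well-defined morphism of DG-algebras landing in the differentiable subcomplex, and then to prove the quasi-isomorphism by a van Est argument based on the contractibility of the fibres of $\pi_1:F^\ify M\ra FM$. For well-definedness, observe that by construction each $\s_p$ in \eqref{xchng2} is obtained from the single section $\s_\nabla$ evaluated at the affine interpolation $\sum_i t_i\nabla^{\rho_i}$; compatibility with the (co)simplicial structure is exactly the condition that makes $\hat\s$ descend to $|\bar\triangle_{\Gb} FM|$, so $\hat\s^*(\tilde\om)$ satisfies \eqref{sform} automatically. For the invariance \eqref{cov2}, I would use the chain rule in the form $\nabla^{\rho_i\rho}=(\nabla^{\rho_i})^\rho$ together with the Diff-equivariance \eqref{nat}, which shows that $\s_p$ intertwines the $\Gb$-action on the homogeneous simplicial manifold with the prolonged action on $F^\ify M$; since $\tilde\om$ is $\Gb$-invariant by \eqref{invid}, its pull-back is invariant.

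The decisive well-definedness point is the differentiability condition \eqref{diffo}. Here I would note that the connection form of $\nabla^{\rho_i}$ depends on $\rho_i$ only through its $2$-jet, that $\s_\nabla(u)=j_0^\ify(\exp_x^\nabla\circ u)$ depends smoothly on the connection via the exponential map, and that any Gelfand-Fuks cochain $\om\in C^\bullet(\Fa_n)$ involves only finitely many jet coordinates. Composing these, the coefficients of $\hat\s^*(\tilde\om)$ become smooth functions of $\tb$, of $x$, and of finitely many jets of the $\rho_i$, which is precisely \eqref{diffo}. That $\Cc_\nabla$ is a morphism of DG-algebras is then immediate, since $\om\mapsto\tilde\om$ is a DGA-isomorphism and $\hat\s^*$ commutes with $d$ and with wedge products.

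The quasi-isomorphism is the main obstacle. The geometric mechanism is that the fibre of $\pi_1:F^\ify M\ra FM$ is a torsor under the pro-unipotent group of $\ify$-jets with trivial $1$-jet, hence contractible; thus for a fixed $\nabla$ the section $\s_\nabla$ is a fibrewise homotopy equivalence and $\s_\nabla^*$ is a quasi-isomorphism. The role of the simplex $\D^p$ and of the affine interpolation $\sum_i t_i\nabla^{\rho_i}$ is to upgrade this to a $\Gb$-equivariant statement: any two of the connections $\nabla^{\rho_i}$ are joined by the canonical affine path, so the higher simplices furnish the chain homotopies expressing independence of the choice of connection. Concretely, I would filter $\Om_{\rm d}^\bullet(|\bar\triangle_{\Gb} FM|)$ by the simplicial degree, identify the associated-graded term via the contractibility of the fibres, and verify that the resulting spectral sequence collapses to yield $H^\bullet(\Fa_n)$. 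The care required throughout is to check that every comparison map and homotopy preserves the Haefliger-differentiable subcomplexes, which holds because each is again manufactured from $\s_\nabla$ and finite jets; alternatively, one may identify the map induced by $\Cc_\nabla$ on cohomology with the abstract van Est quasi-isomorphism of \cite{CM98}.

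For part (2), the $\GL_n$-equivariance of $\Cc_\nabla$ is inherited from that of $\s_\nabla$ in \eqref{Rinv}, which is preserved under affine combination of connections and hence by $\hat\s$, as already noted after \eqref{xchng2}. Restricting to $\O_n\subset\GL_n$, the map therefore sends $\O_n$-basic cochains $C^\bullet(\Fa_n,\O_n)$ to $\O_n$-basic compatible forms, which are identified with forms on $PM=FM/\O_n$. Finally, since $\O_n$ is compact, passage to the $\O_n$-basic subcomplex is exact (averaging gives a projection onto the invariants), so the quasi-isomorphism of part (1) descends to the required quasi-isomorphism $\Cc_\nabla^{\O_n}$.
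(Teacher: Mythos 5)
Your well-definedness argument and your treatment of part (2) are sound and essentially match the paper's. The gap lies in the core of part (1): nothing in your plan actually bridges the differentiable simplicial complex and the Gelfand--Fuks complex $C^\bullet(\Fa_n)$. The paper's proof rests on two separate quasi-isomorphism statements, and you have only an imprecise form of one of them. First, the paper factors $\Cc_\nabla$ through $\Om_{\rm d}^\bullet (|\bar\triangle_{\Gb} F^\ify M|)$: since $\pi_1\circ\s_{\td\nabla}=\Id$ for any connection, one gets $(\Id \ts \pi_1) \circ (\Id \ts \hat{\s})  = \Id$ at the simplicial level, so $(\Id\ts\hat\s)^*$ is a \emph{left inverse} of $(\Id\ts\pi_1)^*$; the latter is a quasi-isomorphism because the fibers of $\pi_1$ are contractible, hence so is the former. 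You have the contractibility idea, but you propose to exploit it by filtering $\Om_{\rm d}^\bullet (|\bar\triangle_{\Gb} FM|)$ by simplicial degree and claiming the spectral sequence ``collapses to $H^\bullet(\Fa_n)$''. That cannot work as stated: the levels of $\bar\triangle_{\Gb} FM$ are $\Gb^{p+1}\ts FM$, the projection $\pi_1$ does not appear in that complex at all, so contractibility of its fibers says nothing about the associated graded; and identifying the $E_2$ page and proving collapse to Gelfand--Fuks cohomology is essentially the van Est--Haefliger theorem itself, i.e.\ the statement being proved.

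Second --- and this is the ingredient missing entirely from your plan --- the paper proves that the inclusion of the invariant forms $\Om^\bullet(F^\ify M)^{\Gb}\cong C^\bullet(\Fa_n)$ into $\Om_{\rm d}^\bullet (|\bar\triangle_{\Gb} F^\ify M|)$ is a quasi-isomorphism, via Haefliger's explicit horizontal homotopy
\begin{equation*}
(H\a)_{p-1}(\tb; \rho_0, \ldots, \rho_{p-1} , j_0^\ify(\rho)) =
\a_p (\tb; \rho^{-1}, \rho_0, \ldots, \rho_{p-1} , j_0^\ify(\rho)) .
\end{equation*}
This formula exists precisely because a point of $F^\ify M$ is itself the jet of a diffeomorphism, which can be fed back in as an extra group argument; no such homotopy is available on $FM$, which is why the detour through $F^\ify M$ is unavoidable. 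Combined with the observation that $(\Id\ts\hat\s)^*$ restricted to invariant forms coincides with $\Cc_\nabla$, these two facts yield the theorem. Your fallback --- identifying the induced map with the abstract van Est quasi-isomorphism of \cite{CM98} --- does not repair this: even granting the abstract isomorphism of cohomologies, one must still show that the concrete map $\Cc_\nabla$ induces it (or at least induces an isomorphism), which again requires the two lemmas above. Your remark for part (2) about averaging over the compact group $\O_n$ is the right idea; the paper implements it by averaging the horizontal homotopy itself over $\O_n$.
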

 
 \begin{proof} Since $\tilde\om$ is $\Gb$-invariant, $\, \hat{\s}^*(\tilde\om)$ is 
indeed a compatible form. It is also quite obvious that it belongs to the differentiable
subcomplex $\, \Om_{\rm d}^\bullet (|\bar\triangle_{\Gb} FM|)$. 
Furthermore,
 $\, \hat{\s}^*(d\tilde\om) =   d \big(\hat{\s}^*(\tilde\om)\big)$, and so
$\,  \Cc_\nabla$ is a well-defined map of complexes.
 
 Observe now that for any connection $\td\nabla$,
 \begin{equation*} 
(\pi_1 \circ \s_{\td\nabla}) (u) = j_0^{1} (\exp_x^{\td\nabla} \circ u) = u, \quad u \in F_x M \, .
\end{equation*}
Upgrading both $\pi_1$ and $\hat{\s}$ in the obvious way to simplicial maps
$ \Id \ts \pi_1 : |\bar\triangle_{\Gb} F^\ify M| \ra |\bar\triangle_{\Gb} FM|$ and
$\Id \ts \hat{\s}  : |\bar\triangle_{\Gb} FM| \ra |\bar\triangle_{\Gb} F^\ify M|$,
it follows that
\begin{align*} 
(\Id \ts \pi_1) \circ (\Id \ts \hat{\s})  \, =\, \Id .
\end{align*}
Hence $ (\Id \ts \hat{\s})* : \Om_{\rm d}^\bullet (|\bar\triangle_{\Gb} F^\ify M|) \ra
 \Om_{\rm d}^\bullet (|\bar\triangle_{\Gb} FM|)$
is a left inverse for 
$(\Id \ts \pi_1)^* : \Om_{\rm d}^\bullet (|\bar\triangle_{\Gb} FM|) \ra 
\Om_{\rm d}^\bullet (|\bar\triangle_{\Gb} F^\ify M|)$. Because the fibers of $\pi_1$ are
contractible, $(\Id \ts \pi_1)^*$ induces an isomorphism in (differentiable) cohomology.
Therefore so does its inverse $ (\Id \ts\hat{\s})^* $.
 
 On the other hand, the usual horizontal homotopy (cf.~\cite[\S IV.4]{HaefDC}),
  defined by the formula
 \begin{align*}
 \begin{split}
 (H\a)_{p-1}&(\tb; \rho_0, \ldots, \rho_{p-1} , j_0^\ify(\rho)) = \\
&\a_p (\tb; \rho^{-1}, \rho_0, \ldots, \rho_{p-1} , j_0^\ify(\rho)), \qquad
 \a \in  \Om_{\rm d}^\bullet (|\bar\triangle_{\Gb} F^\ify M| ,
 \end{split}
 \end{align*}
 shows that the natural inclusion
 of $\Om^\bullet (F^\ify M)^\Gb$ into  $\Om_{\rm d}^\bullet (|\bar\triangle_{\Gb} F^\ify M|)$ 
is also quasi-isomorphism. 
Recalling the identification \eqref{invid}, the proof is achieved by noting that
 when restricted to $\Om^\bullet (F^\ify M)^\Gb$ the map
 $ (\Id \ts\triangle \s )^*$ coincides with $\Cc_\nabla$.
\smallskip
 
 The second claim has a similar proof. Identifying
 the $\O_n$-basic forms on $F^\ify M$ with forms on 
 $P^\ify M = F^\ify M/\O_n$, the appropriate homotopy takes the form
  \begin{align*}
 \begin{split}
 (H\a)_{p-1}(\tb; &\rho_0, \ldots, \rho_{p-1} , \, j_0^\ify(\rho) \O_n) = \\
&\int_{\O_n} \a_p (\tb; k^{-1}\rho^{-1} , \rho_0, \ldots, \rho_{p-1} , \, j_0^\ify(\rho) \O_n) dk .
 \end{split}
 \end{align*}
 \end{proof}

Combining the above theorem with Theorem \ref{difDup},
one obtains the following 
explicit form of the van Est-Haefliger isomorphism~\cite[\S IV.4]{HaefDC}.

\begin{thm} \label{main2}
 The maps   $\displaystyle \Dc_\nb = \circint_{\D^\bullet}  \Cc_\nabla
 : {C}^\bullet(\Fa_n) \ra \bar{C}_{\rm d}^{\rm tot \, \bullet} \left(\Gb, \Om^\ast (FM)\right)$,
  resp. $\displaystyle  \Dc^{\O_n}_\nabla = \circint_{\D^\bullet} \Cc^{\O_n}_\nabla
 : C^\bullet(\Fa_n, \O_n) \ra 
\bar{C}_{\rm d}^{\rm tot \, \bullet} \left(\Gb, \Om^\ast (PM)\right)$, 
are quasi-isomorphisms of complexes. 
\end{thm}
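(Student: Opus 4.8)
The plan is to recognize both maps as composites of quasi-isomorphisms that have already been established, so that the theorem reduces to the formal fact that a composite of quasi-isomorphisms is again one. By the very definitions in the statement, $\Dc_\nb = \circint_{\D^\bullet} \circ \Cc_\nabla$ and $\Dc^{\O_n}_\nabla = \circint_{\D^\bullet} \circ \Cc^{\O_n}_\nabla$, so it suffices to check that each factor is a quasi-isomorphism in the relevant setting and that the composite is well-defined at the level of cochains.

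First I would invoke Theorem \ref{main1}: part (1) supplies the first factor $\Cc_\nabla : C^\bullet(\Fa_n) \ra \Om_{\rm d}^\bullet (|\bar\triangle_{\Gb} FM|)$ as a quasi-isomorphism of DG-algebras, and part (2) supplies its $\O_n$-basic counterpart $\Cc^{\O_n}_\nabla : C^\bullet(\Fa_n, \O_n) \ra \Om_{\rm d}^\bullet (|\bar\triangle_{\Gb} (PM, \O_n)|)$. Next I would apply Theorem \ref{difDup}, but with the manifold called $M$ there replaced by the frame bundle $FM$ (resp.\ by $PM = FM/\O_n$), carrying the prolonged $\Gb$-action in place of the tautological action of $\Diff(M)$. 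This yields that $\circint_{\D^\bullet} : \Om_{\rm d}^\bullet (|\bar\triangle_{\Gb} FM|) \ra \bar{C}_{\rm d}^{\rm tot \, \bullet}\left(\Gb, \Om^\ast(FM)\right)$, and its $\O_n$-basic version over $PM$, induce isomorphisms in differentiable cohomology. Composing the two factors shows that $\Dc_\nb$ and $\Dc^{\O_n}_\nabla$ are quasi-isomorphisms.

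I would emphasize one point about the conclusion: it is only a quasi-isomorphism of \emph{complexes}, not of DG-algebras. Although $\Cc_\nabla$ is multiplicative by Theorem \ref{main1}, integration along the fibers $\circint_{\D^\bullet}$ is not, so multiplicativity is not inherited by the composite; this is exactly why the statement is phrased for complexes rather than DG-algebras.

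The only genuinely non-formal point, and the step I expect to require care, is the application of Theorem \ref{difDup} to $FM$ and $PM$ rather than to $M$ itself. Theorem \ref{difDup} was stated for $\Gb = \Diff(M)$ acting on $M$, whereas here the same group $\Gb$ acts on $FM$ by prolongation. I would check that its proof transfers verbatim: the chain maps and homotopies from Dupont's Theorem 2.3 are valid for any simplicial manifold, and the only place where the group enters is the differentiable condition \eqref{diffo}. Under prolongation a $k$-jet of $\phi \in \Gb$ along $M$ determines the corresponding jet of its action on $FM$ (of one order higher), so the differentiable subcomplexes over $FM$ and over $PM$ are precisely the ones preserved by those maps and homotopies. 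Once this bookkeeping is confirmed, the fiber-integration quasi-isomorphism of Theorem \ref{difDup} applies in both the absolute and the $\O_n$-basic settings, and the theorem follows.
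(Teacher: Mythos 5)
Your proposal is correct and follows essentially the same route as the paper, which obtains Theorem \ref{main2} in a single step by combining Theorem \ref{main1} with Theorem \ref{difDup} applied (tacitly) to $FM$, resp. $PM$, with the prolonged $\Gb$-action. Your two supplementary observations --- that Theorem \ref{difDup} must be transferred from $M$ to $FM$ and $PM$, and that fiber integration destroys multiplicativity so the conclusion is only one of complexes --- are points the paper leaves implicit (the latter is mentioned only afterwards, just before Corollary \ref{UCW2}), but they refine rather than alter the argument.
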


\smallskip 

\subsection{Characteristic cocycles} \label{CharCo}

Although explicit, the map $\Dc_\nb$ is quite intricate and thus not
amenable to concrete computations.
Instead, we proceed now to describe an alternative construction of the
$\Diff$-equivariant geometric characteristic classes, in the spirit of the Chern-Weil theory (cf. \cite{Cartan}),
in terms of cocycles manufactured out of the connection and curvature forms.  
 \smallskip
   
 The universal connection and curvature forms 
 $\vartheta = (\vartheta^i_j)$ and $R = (R^i_j)$, defined as in
 \cite[\S 2]{Bott*}, generate
 a DG-subalgebra $CW^\bullet (\Fa_n)$ of $C^\bullet (\Fa_n)$.   
 By the Gelfand-Fuks theorem (cf.~\cite{GF, Godbillon}), the inclusion 
 $CW^\bullet (\Fa_n) \hookrightarrow C^\bullet (\Fa_n)$ is a quasi-isomorphism.
 Actually, there is a faithful embedding of the
truncated Weil complex  $\hat{W}(\Fg\Fl_n)$ which identifies it with the
subcomplex $CW^\bullet (\Fa_n) $ of $C^\bullet (\Fa_n) $.
We recall that $\,\hat{W}(\Fg\Fl_n) = W(\Fg\Fl_n)/ \Ic_{2n}$,  \,where
$\, W(\Fg\Fl_n) = \wg^\bullet \Fg\Fl^*_n \ot S(\Fg\Fl_n)$  is the Weil algebra
of $\Fg\Fl_n$, and
$\Ic_{2n}$ is  the ideal generated by the elements of $S(\Fg\Fl_n) $ of degree $> 2n$.
 These DG-algebras are $\GL_n$ algebras as well. Let
  $CW^\bullet (\Fa_n, \O_n)$, resp. $\hat{W}(\Fg\Fl_n,  \O_n)$, denote
  their subalgebras consisting of $ \O_n$-basic elements. The above identification
 $\hat{W}(\Fg\Fl_n) \equiv CW^\bullet (\Fa_n)$ then restricts to an identification
  $\hat{W}(\Fg\Fl_n,  \O_n) \equiv CW^\bullet (\Fa_n ,  \O_n)$. 

With $\nb$ being as before a fixed torsion-free connection and
 $  \om_\nb = (\om_j^i)$, resp.  $\Om_\nb = (\Om_j^i )$, 
 denoting its matrix-valued connection,
 resp. curvature form on $FM$, one has the naturality relation: 
 
\begin{lem}\label{Luc}
\quad $\s_{\nabla}^* (\wt{\vartheta}_j^i) = \om_j^i$ \, and \quad
$\s_{\nabla}^* (\wt{R}_j^i) = \Om_j^i$.
\end{lem}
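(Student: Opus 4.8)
The plan is to prove the connection identity $\s_\nabla^*(\wt{\vartheta}_j^i) = \om_j^i$ first and to deduce the curvature identity from it formally. Indeed, by its very definition as the image of the Weil generator under the embedding $\hat{W}(\Fg\Fl_n)\hookrightarrow CW^\bullet(\Fa_n)$, the universal curvature obeys the structure equation $R_j^i = d\vartheta_j^i + \vartheta_k^i\wg\vartheta_j^k$ in $CW^\bullet(\Fa_n)$. Since the identification \eqref{invid} is an isomorphism of DG-algebras, the same relation holds between the corresponding invariant forms on $F^\infty M$, namely $\wt{R}_j^i = d\wt{\vartheta}_j^i + \wt{\vartheta}_k^i\wg\wt{\vartheta}_j^k$. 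As $\s_\nabla^*$ is a morphism of DG-algebras, once the connection identity is established one obtains
\begin{equation*}
\s_\nabla^*(\wt{R}_j^i) = d\big(\s_\nabla^*\wt{\vartheta}_j^i\big) + \s_\nabla^*\wt{\vartheta}_k^i \wg \s_\nabla^*\wt{\vartheta}_j^k = d\om_j^i + \om_k^i\wg\om_j^k = \Om_j^i ,
\end{equation*}
the last equality being Cartan's second structure equation for $\nabla$ on $FM$.

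It remains to prove $\s_\nabla^*(\wt{\vartheta}_j^i) = \om_j^i$. Under \eqref{invid}, $\wt{\vartheta}_j^i$ corresponds to the Gelfand--Fuks $1$-cochain $v\mapsto \p_j v^i(0)$ extracting the linear part at the origin of a formal vector field $v = \sum_i v^i\,\p/\p x^i$; thus for the $\Gb$-invariant field $\tilde v$ one has $\wt{\vartheta}_j^i(\tilde v) = \p_j v^i(0)$. Both $\s_\nabla^*\wt{\vartheta}_j^i$ and $\om_j^i$ are $\Fg\Fl_n$-valued $1$-forms on $FM$, so it suffices to compare them on vertical and on $\nabla$-horizontal vectors. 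For the vertical vector $X^A_u = \dt\, R_{\exp tA}(u)$ attached to $A\in\Fg\Fl_n$, the $\GL_n$-equivariance \eqref{Rinv} gives $\s_\nabla(R_{\exp tA} u) = R_{\exp tA}\,\s_\nabla(u)$, so $d\s_\nabla(X^A_u)$ is the fundamental field of the right $\GL_n$-action on $F^\infty M$; writing it as $\tilde v$ with $v(y) = Ay$ yields $\s_\nabla^*\wt{\vartheta}_j^i(X^A_u) = \p_j v^i(0) = A_j^i = \om_j^i(X^A_u)$.

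The crux is the horizontal case. Let $X\in T_u FM$ be $\nabla$-horizontal and realize it as $X = \dt\, u_t$, where $u_t$ is the $\nabla$-parallel frame along the curve $x_t = \pi(u_t)$ with $u_0 = u$, $x_0 = x$. Setting $\phi_t = \exp_{x_t}^\nabla\circ u_t$ and $\rho_t = \phi_0^{-1}\circ\phi_t$, one has $\s_\nabla(u_t) = j_0^\infty(\phi_0\circ\rho_t)$, so $d\s_\nabla(X) = \tilde v|_{\s_\nabla(u)}$ with $v = \dt\,(\phi_0^{-1}\circ\phi_t)$ and hence $\s_\nabla^*\wt{\vartheta}_j^i(X) = \p_j v^i(0)$. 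Now $\phi_0^{-1} = (\exp_x^\nabla\circ u)^{-1}$ is precisely the normal coordinate chart centred at $x$ determined by the frame $u$; in these coordinates $D\phi_0^{-1}$ is the identity, whence the linear part of $v$ at the origin reduces to $\dt\,(u_t)_j^i$, the coordinate derivative of the frame components. Since $u_t$ is $\nabla$-parallel it obeys $\dfrac{d(u_t)_j^i}{dt} = -\Gamma_{kl}^i(x_t)\,\dot{x}_t^k\,(u_t)_j^l$, and because $\nabla$ is torsion-free its Christoffel symbols vanish at the centre of its normal coordinates, i.e.\ $\Gamma_{kl}^i(x) = 0$. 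Hence $\p_j v^i(0) = \dt\,(u_t)_j^i = 0 = \om_j^i(X)$.

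Combining the two cases gives $\s_\nabla^*\wt{\vartheta}_j^i = \om_j^i$ on all of $TFM$, and the curvature identity then follows as explained above. The only genuinely geometric ingredient, and the sole real obstacle, is the horizontal computation: identifying $\phi_0^{-1}$ with a normal coordinate chart and invoking the vanishing of the Christoffel symbols at its centre for a torsion-free connection. Everything else is formal bookkeeping of jets together with the DG-algebra isomorphism \eqref{invid}.
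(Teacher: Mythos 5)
Your proof is correct, but for the key identity $\s_{\nabla}^* (\wt{\vartheta}_j^i) = \om_j^i$ you take a genuinely different route from the paper. Both arguments dispose of the curvature identity the same way, via the structure equation \eqref{curv} and the fact that $\s_{\nabla}^*$ is a morphism of DG-algebras. For the connection identity, however, the paper argues abstractly: by the Diff-equivariance \eqref{nat} of $\s_\nabla$, the assignment $\om_{\nabla} \mapsto \s_{\nabla}^* (\wt{\vartheta})$ is a natural operator on the affine space of torsion-free connections, and the uniqueness theorem for natural operators on torsion-free connections (Kol\'a\v{r}--Michor--Slov\'ak, \S 25.3) forces this operator to be the identity. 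You instead compute directly: after identifying $\wt{\vartheta}^i_j$ under \eqref{invid} with the Gelfand--Fuks cochain $v \mapsto \p_j v^i(0)$, you verify the equality separately on vertical vectors (using the $\GL_n$-equivariance \eqref{Rinv} to reduce to the defining property of a connection form on fundamental fields) and on $\nabla$-horizontal vectors (realizing such a vector by a parallel frame field, transporting it to $F^\ify M$ through \eqref{jcon}, and observing that in the normal coordinate chart $\phi_0^{-1}$ the parallelism equation together with the vanishing of the Christoffel symbols of a torsion-free connection at the centre yields $\p_j v^i(0)=0$). Your computation checks out, including the subtle point that $D(\phi_0^{-1}\circ \phi_t)(0)$ reduces to the frame components $(u_t)^i_j$ in normal coordinates. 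What each approach buys: the paper's proof is shorter and conceptual (naturality plus rigidity), but black-boxes the essential geometric fact into an external uniqueness theorem whose torsion-free hypothesis is easy to overlook; your proof is longer but elementary and self-contained, and it pinpoints exactly where torsion-freeness is used.
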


 \begin{proof} (Cf.~\cite[Lemma 18]{CM01}.)  Since
\begin{equation} \label{curv}
R_j^i = d \, \vartheta_j^i + \vartheta_k^i \wedge \vartheta_j^k \, ,
\end{equation}
the second identity is a consequence of the first. 
To prove the first, we note that by (\ref{nat}) the  
operator $\om_{\nabla} \mapsto \s_{\nabla}^* (\wt{\vartheta})$, acting   
on the (affine) space of torsion-free 
connections on $FM$, is natural, i.e. $\Gb$-equivariant. 
The uniqueness of such operators on
torsion-free connections (cf.~\cite[\S 25.3]{K-M-S}) ensures that 
the only such operator is the identity. 
 \end{proof}

  In homogeneous group coordinates (see \eqref{xchng}), the
  simplicial connection form-valued matrix
   $\hat{\om_\nb} = \{ \hat\om_p \}_{p \in \Nb}$ associated to $\nabla$
  has components
\begin{align} \label{scone}
 \hat\om_p (\tb ; \rho_0, \ldots , \rho_p) : = \sum_{i=0}^p t_i \rho_i^* (\om_\nb) ,  
\end{align}
and the 
 simplicial matrix-valued curvature form
 $\hat{\Om}_\nb : = d \hat{\om}_\nb + \hat{\om}_\nb \wg \hat{\om}_\nb$
has components \, $\hat{\Om}_p = \hat{\Om}_p^{(1, 1)} + \hat{\Om}_p^{(0, 2)}$,
given by
\begin{align}  \label{scurv}
\begin{split}
&\hat{\Om}_p (\tb ; \rho_0, \ldots , \rho_p) \, = \, \sum_{i=0}^p dt_i \wdg \rho_i^* (\om_\nb) \, + \\
&\sum_{i=0}^p t_i \big(\rho_i^* (\Om_\nb) - \rho_i^* (\om_\nb) \wdg \rho_i^* (\om_\nb)\big) 
+ \sum_{i, j=0}^p t_i t_j \,  \rho_i^* (\om_\nb) \wdg  \rho_j^* (\om_\nb) .
\end{split}
 \end{align} 
 The forms $\hat{\om}_j^i$ and $\hat{\Om}_j^i$ clearly
belong to the differentiable de Rham complex $\Om_{\rm d}^\bullet (|\bar\triangle_{\Gb} FM|)$.
\smallskip

In view of the above discussion, Theorem \ref{main1} 
together with Lemma \ref{Luc} have the following consequence.

\begin{cor} \label{UCW1} 
{\rm  \bf (1a)} The forms  $\, \hat{\om}^i_j$ and  $\, \hat{\Om}^i_j$ generate
 a DG-subalgebra $CW_{\rm d}^\bullet (|\bar\triangle_{\Gb} FM|)$, and
 $\Cc_\nb$ restricts to an isomorphism of   $CW^\bullet (\Fa_n) \equiv \hat{W}(\Fg\Fl_n)$
  onto  $CW_{\rm d}^\bullet (|\bar\triangle_{\Gb} FM|)$. 
 
 {\rm  \bf (2a)}  By restriction to the respective DG-subalgebras of $ \O_n$-basic elements, 
 $\Cc_\nb$ induces an isomorphism of
 $CW^\bullet (\Fa_n,  \O_n)) \equiv \hat{W}(\Fg\Fl_n,  \O_n) $
onto $CW_{\rm d}^\bullet (|\bar\triangle_{\Gb} PM|)$.
 \end{cor}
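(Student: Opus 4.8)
The plan is to reduce the whole statement to a single computation, namely that $\Cc_\nb$ carries the two families of generators of $CW^\bullet(\Fa_n)$ to the simplicial forms of \eqref{scone} and \eqref{scurv}:
\[
\Cc_\nb(\vartheta^i_j) = \hat\om^i_j , \qquad \Cc_\nb(R^i_j) = \hat\Om^i_j .
\]
Granting these, part \textbf{(1a)} is nearly immediate: by Theorem \ref{main1}(1) the map $\Cc_\nb$ is a homomorphism of DG-algebras, so it sends the subalgebra $CW^\bullet(\Fa_n)$ generated by the $\vartheta^i_j$ and $R^i_j$ onto the subalgebra generated by the $\hat\om^i_j$ and $\hat\Om^i_j$. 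The latter is closed under $d$, since $d\hat\om^i_j = \hat\Om^i_j - \hat\om^i_k \wg \hat\om^k_j$ and $d\hat\Om^i_j$ is given by the Bianchi identity, hence lies again in the algebra they generate; I would take this DG-subalgebra as the definition of $CW_{\rm d}^\bullet (|\bar\triangle_{\Gb} FM|)$, so that surjectivity comes for free.

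First I would establish the connection identity $\Cc_\nb(\vartheta^i_j) = \hat\om^i_j$, which is the crux. By definition $\Cc_\nb(\vartheta^i_j) = \hat{\s}^*(\wt{\vartheta}^i_j)$, and over each simplex $\s_p$ restricts, for fixed $\tb$, to the jet cross-section $\s_{\nabla(\tb;\rho)}$ of the single torsion-free connection $\nabla(\tb;\rho) = \sum_i t_i \nabla^{\rho_i}$. Since the space of connections is affine and an affine combination (with $\sum_i t_i = 1$) of torsion-free connections is again torsion-free with connection form the corresponding affine combination $\sum_i t_i \rho_i^*(\om_\nb)$, Lemma \ref{Luc} computes the $FM$-directional part of $\hat{\s}^*(\wt{\vartheta}^i_j)$ to be exactly $\sum_i t_i \rho_i^*(\om_\nb) = \hat\om^i_j$. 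It then remains to check that no $dt$-terms appear, and this is the one genuinely technical point. As $\tb$ varies the section $\s_{\nabla(\tb;\rho)}$ moves only within the fibers of $\pi_1 : F^\ify M \ra FM$, because $\pi_1 \circ \s_{\td\nabla} = \Id$ for every $\td\nabla$; thus $\partial_{t_i}\s_p$ is $\pi_1$-vertical, and $\wt{\vartheta}^i_j$, being the order-one ($\Fg\Fl_n$) component under the identification \eqref{invid}, annihilates these higher-order vertical directions. Hence the $dt$-contribution vanishes and $\Cc_\nb(\vartheta^i_j) = \hat\om^i_j$.

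The curvature identity is then formal: applying the DG-algebra map $\Cc_\nb$ to the structure equation \eqref{curv} gives $\Cc_\nb(R^i_j) = d\,\hat\om^i_j + \hat\om^i_k \wg \hat\om^k_j = \hat\Om^i_j$, which reproduces \eqref{scurv}, its $(1,1)$-part $\sum_i dt_i \wg \rho_i^*(\om_\nb)$ coming precisely from $d\hat\om$. To upgrade the surjection of \textbf{(1a)} to an isomorphism I would argue injectivity of $\Cc_\nb$ on $CW^\bullet(\Fa_n)$. Being a homomorphism, $\Cc_\nb$ automatically carries the defining relations of $\hat W(\Fg\Fl_n)$ to relations among the $\hat\om^i_j, \hat\Om^i_j$; in particular the truncation $\Ic_{2n}$ transfers, since the $\hat\Om$-polynomials of symmetric degree $>n$ are pullbacks under $\hat{\s}$ of the corresponding $\wt R$-polynomials, which already vanish on $F^\ify M$ by \eqref{invid}. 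That no \emph{further} relations appear follows from the faithfulness of \eqref{invid} together with the fact, established in the proof of Theorem \ref{main1}, that on the invariant forms $\Om^\bullet(F^\ify M)^\Gb$ the map $\Cc_\nb$ coincides with the pullback $(\Id \ts \hat{\s})^*$ along the classifying map $\hat{\s}$, which is injective on this copy of $\hat W(\Fg\Fl_n)$.

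For part \textbf{(2a)} I would run the same argument $\O_n$-equivariantly. By Theorem \ref{main1}(2) the map $\Cc_\nb$ is $\GL_n$-equivariant, hence $\O_n$-equivariant, and the two generator identities above are manifestly $\GL_n$-natural; restricting to the subalgebras of $\O_n$-basic elements and invoking the compatible identification $\hat W(\Fg\Fl_n, \O_n) \equiv CW^\bullet(\Fa_n, \O_n)$, together with the identification of $\O_n$-basic forms on $FM$ with forms on $PM = FM/\O_n$, yields the isomorphism onto $CW_{\rm d}^\bullet (|\bar\triangle_{\Gb} PM|)$. The main obstacle throughout is the vanishing of the $dt$-terms in the connection identity; once that point is settled, everything else is either the structure equation or a transfer along $\Cc_\nb$ of the already-established faithful presentation by $\hat W(\Fg\Fl_n)$.
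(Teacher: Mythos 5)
Your proposal follows essentially the same route as the paper, which presents this corollary as an immediate consequence of Theorem \ref{main1} and Lemma \ref{Luc}: one applies Lemma \ref{Luc} to the torsion-free affine combination $\nabla(\tb;\rho_0,\ldots,\rho_p)=\sum_i t_i\,\nabla^{\rho_i}$ to see that $\Cc_\nb$ sends the universal connection and curvature generators to $\hat{\om}^i_j$ and $\hat{\Om}^i_j$, and then invokes the DG-algebra property of $\Cc_\nb$ from Theorem \ref{main1}. Your added details — the vanishing of the $dt$-components via the $\pi_1$-verticality of $\partial_{t_i}\s_p$, and the transfer of the truncation relations — make explicit what the paper leaves implicit; the one soft spot is that your closing injectivity clause (that $(\Id\ts\hat{\s})^*$ is injective on the copy of $\hat{W}(\Fg\Fl_n)$ inside $\Om^\bullet(F^\ify M)^\Gb$) is asserted rather than argued, though the paper itself supplies no more justification for the ``isomorphism onto'' claim either.
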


The operation of integration along the fibers does not preserve the 
 cup product (which is graded commutative at the source but not in the target).
 Nevertheless, by Theorem \ref{main2}, it still induces isomorphism in 
 cohomology.
 
\begin{cor} \label{UCW2}      
{\rm  \bf (1b)}  The restriction of $\Dc_\nabla$  to $CW^\bullet (\Fa_n,  \O_n)$
is a quasi-isomorphism to 
$\{C_{\rm d}^{\rm tot \, \bullet} \left(\Gb, \Om^\ast (FM)\right), \d \pm d \}$.
 
{\rm  \bf (2b)} The restriction of $\Dc^{\O_n}_\nabla$ to  $CW^\bullet (\Fa_n,  \O_n) $
is a quasi-isomorphism to  
$\{C_{\rm d}^{\rm tot \, \bullet} \left(\Gb, \Om^\ast (PM)\right), \d \pm d \}$.
 \end{cor}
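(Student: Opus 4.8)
The plan is to obtain both statements as a formal consequence of Theorem \ref{main2} together with the Gelfand--Fuks quasi-isomorphism $CW^\bullet(\Fa_n) \hookrightarrow C^\bullet(\Fa_n)$ recalled above, by a two-out-of-three argument applied to the factorization $\Dc_\nb = \circint_{\D^\bullet} \circ \, \Cc_\nb$. First I would observe that the restriction of $\Dc_\nb$ to the Chern--Weil subalgebra is nothing but the composite of the inclusion $\iota : CW^\bullet(\Fa_n) \hookrightarrow C^\bullet(\Fa_n)$ with the map $\Dc_\nb$ of Theorem \ref{main2}. Since $\iota$ is a quasi-isomorphism by the Gelfand--Fuks theorem and $\Dc_\nb$ is a quasi-isomorphism by Theorem \ref{main2}, the composite $\Dc_\nb \circ \iota = \Dc_\nb|_{CW^\bullet(\Fa_n)}$ is a quasi-isomorphism; this is claim (1b).

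To make the target explicit and keep the picture transparent, I would re-run the argument through the differentiable simplicial de Rham complex. By Corollary \ref{UCW1}(1a), $\Cc_\nb$ restricts to an \emph{isomorphism} $CW^\bullet(\Fa_n) \xrightarrow{\sim} CW_{\rm d}^\bullet(|\bar\triangle_{\Gb} FM|)$, whereas by Theorem \ref{main1}(1) it is a quasi-isomorphism on the full complex; comparing these in the evident commuting square with the inclusions shows that $CW_{\rm d}^\bullet(|\bar\triangle_{\Gb} FM|) \hookrightarrow \Om_{\rm d}^\bullet(|\bar\triangle_{\Gb} FM|)$ is itself a quasi-isomorphism. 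Because $\circint_{\D^\bullet}$ is a quasi-isomorphism on all of $\Om_{\rm d}^\bullet$ by Theorem \ref{difDup}, its restriction to the subcomplex $CW_{\rm d}^\bullet$ (the composite of that inclusion with $\circint_{\D^\bullet}$) is again a quasi-isomorphism onto $\{C_{\rm d}^{\rm tot \, \bullet}(\Gb, \Om^\ast(FM)), \d \pm d\}$, and composing with the isomorphism $\Cc_\nb|_{CW^\bullet(\Fa_n)}$ recovers (1b) while exhibiting the target. Claim (2b) follows from the identical reasoning with the $\O_n$-basic versions inserted throughout: Corollary \ref{UCW1}(2a) furnishes the isomorphism $CW^\bullet(\Fa_n, \O_n) \xrightarrow{\sim} CW_{\rm d}^\bullet(|\bar\triangle_{\Gb} PM|)$, the Gelfand--Fuks theorem applies verbatim to the $\O_n$-basic subcomplexes, and Theorem \ref{main2} gives that $\Dc^{\O_n}_\nabla$ is a quasi-isomorphism on $C^\bullet(\Fa_n, \O_n)$.

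I do not anticipate a genuine obstacle, since all the substantive homotopical work is already packaged in Theorems \ref{main1}, \ref{difDup} and \ref{main2}. The one point demanding care is that integration along the fibers $\circint_{\D^\bullet}$ is \emph{not} multiplicative, so each map involved is a quasi-isomorphism of complexes only, not of DG-algebras; this is harmless because every arrow in the two-out-of-three chase is an honest chain map and the conclusion concerns cohomology alone. The only residual thing to verify is that $\circint_{\D^\bullet}$ sends the Chern--Weil subcomplex into a subcomplex of the target, which is immediate from the explicit formulas \eqref{scone}--\eqref{scurv} for $\hat\om_\nb$ and $\hat\Om_\nb$.
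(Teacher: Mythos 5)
Your proposal is correct and takes essentially the same route as the paper, which justifies Corollary \ref{UCW2} exactly by composing the quasi-isomorphism $\Dc_\nb$ of Theorem \ref{main2} with the Gelfand--Fuks quasi-isomorphism $CW^\bullet(\Fa_n)\hookrightarrow C^\bullet(\Fa_n)$ (and their $\O_n$-basic analogues); your second paragraph merely re-expresses this same argument through Corollary \ref{UCW1} and Theorem \ref{difDup}. Your reading of (1b) as the restriction to $CW^\bullet(\Fa_n)$ rather than the literal $CW^\bullet(\Fa_n,\O_n)$ is also the intended one, since otherwise the source and target cohomologies would not agree.
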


To describe concrete bases of cohomology classes constructed in the Chern-Weil manner,
we let $I (\Fg \Fl_n) = S(\Fg \Fl_n)^{\GL_n}$ be the algebra of invariant polynomials,
or equivalently, the subalgebra of $\GL_n$-basic elements of $W (\Fg \Fl_n)$. 
Once the
torsion-free connection $\nabla$ is chosen, to any polynomial
$ \, P \in I (\Fg \Fl_n) $ one associates a closed simplicial
differential form $P(\hat{\Om}) \in \Om_{\rm d}^\bullet (|\triangle_{\Gb} FM|)$.  
On the total space of the frame bundle this form is exact, and 
can be expressed as a boundary
by a standard transgression formula (cf.~\cite{ChernSim}):
\begin{align} \label{TP}
\begin{split}
&P(\hat\Om_\nb) = d (TP(\hat\om_\nb)) , \quad \text{with} \\
&TP(\hat\om_\nb) =   k  \int_0^1 P\left(\hat\om_\nb, \hat{\Om}_t, \ldots , \hat{\Om}_t \right)  dt , 
\quad k = \deg(P),\\
&\text{where} \quad \hat{\Om}_t= t \hat\Om_\nb + (t^2 - t) \hat\om_\nb \wg \hat\om_\nb .
\end{split}
\end{align}

Corollary \ref{UCW2} allows now to transfer a Vey basis~\cite{Godbillon} of $H^* (\Fa_n)$
to a basis of $H_{\rm d} (|\triangle_\Gb FM| , \Rb)$ as follows. Let $\{c_k \}_{1 \leq k \leq n}$ 
be a system of generators of the algebra $I (\Fg \Fl_n)$, for example 
the coefficients of the powers of $t$ in the expansion
\begin{align}
\det \left(\Id - \frac{t}{2\pi i} A\right) \, = \, \sum_{k=1}^n t^k c_k (A), \quad A \in \Fg \Fl_n(\Cb) .
\end{align}
These give the classical Chern forms $c_k (\hat\Om_\nb) \in  \Om_{\rm d}^{2k} (|\triangle_{\Gb} FM|)$,
and by transgression the Chern-Simon forms 
$Tc_k (\hat\om_\nb) \in  \Om_{\rm d}^{2k-1} (|\triangle_{\Gb} FM|)$. 
 \smallskip
 
The image $\displaystyle C_k (\hat\Om_\nb) = \circint_{\D^\bullet} c_k (\hat\Om_\nb)$ is
the cocycle $C_k (\hat\Om_\nb) = \{C_k^{(p)}(\hat\Om_\nb)\}_{p\geq 0}$ whose components
in homogeneous group coordinates are
\begin{align} \label{Chcoc}
 C_k^{(p)} (\hat\Om_\nb) (\phi_0,\ldots, \phi_p) \,= \, 
(-1)^p \circint_{\D^p} c_k\left(\hat\Om_\nb (\tb ; \phi_0, \ldots , \phi_p)\right) .
\end{align}
Similarly, the image 
$\displaystyle  TC_k (\hat{\Om}_\nb) = \circint_{\D^\bullet} Tc_k (\hat{\Om}_\nb)$
 is the transgressed cochain 
 $\, TC_k(\hat{\om}_\nb)= \{TC_k^{(r)}(\hat{\om}_\nb)\} $ with
  homogeneous components given by 
  \begin{align}  \label{TPcoc}
TC_k^{(r)}(\hat{\om}_\nb) (\phi_0,\ldots, \phi_r) \,= \, 
(-1)^p \circint_{\D^p} Tc_k \big(\hat\om_\nb(\tb ; \phi_0, \ldots , \phi_r)\big) .
\end{align}

The Vey basis  can now be transferred 
as follows. Consider the collection $\Vc_n$ of all pairs $(I, J)$ of
 subsets of $\{1, \ldots , n\}$ of the form
 $I = \{i_1 < \ldots < i_p \}$ and $J=\{j_1 \leq \ldots \leq j_q\}$, such that
 $|J| := j_1 +\ldots + j_q \leq n$, $\,   i_1 \leq j_1$ and $\,  i_1 +  |J| > n$.

 \begin{cor} \label{Vbasis}
With $(I, J)$ running over the set $\Vc_n$, the forms
 \begin{align*}
Tc_I(\hat\om_\nb) \wg c_J (\hat{\Om}_\nb) := 
Tc_{i_1}( \hat{\om}_\nb)\wg \ldots \wg Tc_{i_p}(\hat{\om}_\nb) \wg  
c_{j_1}(\hat{\Om}_\nb) \wg  \ldots \wg c_{j_q} (\hat{\Om}_\nb)  
\end{align*}
are closed and their cohomology classes form
a basis of $H_{\rm d}^{\bullet} (|\triangle_{\Gb} FM|, \Rb)$. 

The cocycles obtained by their integration along fibers,
 \begin{align} \label{basis}
C_{I, J} (\nabla)\, := \circint_{\D^\bullet} Tc_I(\hat\om_\nb) \wg c_J (\hat{\Om}_\nb) \, , 
\quad (I,J) \in \Vc_n  \, ,
 \end{align}
provide a complete set of representatives for a basis of $H_{\rm d , \Gb}^{\bullet} (FM, \Rb)$.
 \end{cor}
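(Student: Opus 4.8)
The plan is to transport a Vey basis of the Gelfand--Fuks cohomology $H^\bullet(\Fa_n)$ across the two quasi-isomorphisms already in hand, namely the map $\Cc_\nb$ of Theorem \ref{main1} and the fibre integration $\circint_{\D^\bullet}$ of Theorem \ref{main2}. All of the analytic content has been absorbed into those results, so the argument is essentially a transport of structure combined with one item of classical input. No new complex needs to be analysed.

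First I would record that the forms in the statement are precisely the $\Cc_\nb$-images of the Vey monomials living in the truncated Weil complex. By Corollary \ref{UCW1}(1a), $\Cc_\nb$ restricts to a DG-algebra isomorphism $\hat{W}(\Fg\Fl_n) \equiv CW^\bullet(\Fa_n) \xrightarrow{\ \sim\ } CW_{\rm d}^\bullet(|\bar\triangle_{\Gb} FM|)$, and by the simplicial form of Lemma \ref{Luc} it carries the universal connection and curvature $(\vartheta, R)$ to $(\hat\om_\nb, \hat\Om_\nb)$. Since the Chern polynomials $c_k$ and their Chern--Simons transgressions $Tc_k$ of \eqref{TP} are \emph{universal} expressions in $\vartheta$, $R = d\vartheta + \vartheta\wg\vartheta$ and $d$, and $\Cc_\nb$ is a map of DG-algebras, one gets $\Cc_\nb(c_k) = c_k(\hat\Om_\nb)$ and $\Cc_\nb(Tc_k) = Tc_k(\hat\om_\nb)$, hence $\Cc_\nb(Tc_I \wg c_J) = Tc_I(\hat\om_\nb) \wg c_J(\hat\Om_\nb)$. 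Closedness then follows purely formally in $\hat{W}(\Fg\Fl_n)$: there $d(Tc_i) = c_i$ and $d(c_j) = 0$, so $d(Tc_I \wg c_J) = \sum_\ell \pm \bigl(\prod_{m\ne\ell} Tc_{i_m}\bigr)\, c_{i_\ell}\, c_J$, and each summand carries the symmetric-algebra factor $c_{i_\ell}c_J$, of polynomial degree $i_\ell + |J| \ge i_1 + |J| > n$, which by the defining inequality $i_1 + |J| > n$ of $\Vc_n$ lies in $\Ic_{2n}$ and so vanishes in the truncated complex. Applying the DG-isomorphism $\Cc_\nb$ shows that $Tc_I(\hat\om_\nb) \wg c_J(\hat\Om_\nb)$ is closed in $\Om_{\rm d}^\bullet(|\bar\triangle_{\Gb} FM|)$.

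Next I would invoke the classical input: by the Gelfand--Fuks / Vey theorem (cf. \cite{GF, Godbillon}), the classes $[Tc_I \wg c_J]$ with $(I,J) \in \Vc_n$ form a basis of $H^\bullet(\Fa_n)$, a basis already carried inside the subcomplex $CW^\bullet(\Fa_n)$ whose inclusion into $C^\bullet(\Fa_n)$ is a quasi-isomorphism. Since $\Cc_\nb : C^\bullet(\Fa_n) \to \Om_{\rm d}^\bullet(|\bar\triangle_{\Gb} FM|)$ is a quasi-isomorphism (Theorem \ref{main1}(1)) compatible with the $CW$-identification of Corollary \ref{UCW1}, it carries this basis to a basis of $H_{\rm d}^\bullet(|\triangle_{\Gb} FM|, \Rb)$; this is the first assertion. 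For the second, the fibre integration $\Dc_\nb = \circint_{\D^\bullet}\Cc_\nb$ is a quasi-isomorphism onto the differentiable Bott complex by Theorem \ref{main2}, its restriction to the Chern--Weil subalgebra $CW^\bullet(\Fa_n)$ being the content of Corollary \ref{UCW2}. Applying $\circint_{\D^\bullet}$ to the basis just produced then yields the cocycles $C_{I,J}(\nabla) = \circint_{\D^\bullet}\bigl(Tc_I(\hat\om_\nb) \wg c_J(\hat\Om_\nb)\bigr)$ of \eqref{basis}, whose classes form a basis of $H_{\rm d,\Gb}^\bullet(FM, \Rb)$.

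The only genuinely external ingredient is the classical statement that the $Tc_I \wg c_J$, $(I,J)\in\Vc_n$, constitute a basis of $H^\bullet(\Fa_n)$; everything else is transport across maps already proved to be quasi-isomorphisms. The point that demands care is the compatibility of the Chern--Simons transgression with $\Cc_\nb$ --- that the simplicial transgression \eqref{TP} really is the $\Cc_\nb$-image of the Weil-algebra transgression --- which, as noted, reduces to $\Cc_\nb$ being a DG-algebra homomorphism sending $(\vartheta,R)$ to $(\hat\om_\nb,\hat\Om_\nb)$, together with the degree bookkeeping $i_1 + |J| > n$ that forces the cocycle condition and is precisely the defining constraint of $\Vc_n$. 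Finally, since integration along fibres is not multiplicative, I would stress that the basis property on the Bott side is obtained at the level of \emph{cohomology}, through the quasi-isomorphism $\Dc_\nb$, and not by any product compatibility of $\circint_{\D^\bullet}$.
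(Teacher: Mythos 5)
Your proposal is correct and follows essentially the same route as the paper, which states Corollary \ref{Vbasis} without separate proof precisely because it is the transport, via the quasi-isomorphisms of Corollaries \ref{UCW1} and \ref{UCW2} (i.e.\ $\Cc_\nb$ and $\Dc_\nb = \circint_{\D^\bullet}\Cc_\nb$), of the classical Vey basis of $H^\bullet(\Fa_n)$ realized inside $CW^\bullet(\Fa_n)\equiv\hat{W}(\Fg\Fl_n)$. Your explicit verification of closedness via the truncation ideal $\Ic_{2n}$ and the inequality $i_1+|J|>n$, and your remark that the Bott-complex basis is obtained cohomologically through $\Dc_\nb$ rather than through any multiplicativity of fibre integration, simply make explicit what the paper leaves implicit.
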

 
\smallskip 
  
 The same procedure applies to the relative case. The representatives of the even Chern
 classes are still given by the formula \eqref{Chcoc} with $k = 2i$, while
 the odd Chern forms can be transgressed as follows (cf. \cite[Prop. 5]{Guel}). 
 Denote 
 by $ \Fg \Fl_n  = \Fs_n \oplus  \Fo_n$ the standard decomposition into symmetric and 
 skew-symmetric parts, and let \, $\Fs : \Fg \Fl_n \ra \Fs_n$, resp.  $\Fo : \Fg \Fl_n \ra \Fo_n$,
be the corresponding projections. Then
 \begin{align} \label{transodd}
 \begin{split}
&c_{ 2k-1} (\hat\Om_\nb)  = d (Tc_{ 2k-1}(\hat\om_\nb)), \quad \text{with} \\
&Tc_{ 2k-1}(\hat\om_\nb) = 
(2k-1) \int_0^1 c_{ 2k-1}\left(\Fs(\hat{\om}_\nb),
 \hat{\Om}_t, \ldots , \hat{\Om}_t \right)  dt , \\
&\text{where} \quad \hat{\Om}_t = 
t \Fs(\hat{\Om}_\nb) + \Fo (\hat{\Om}_\nb) + (t^2 - 1) \Fs(\hat{\om}_\nb) \wg \Fs(\hat{\om}_\nb) .
\end{split}
 \end{align}
 
To construct a Vey basis, one now takes the collection $\Vc O_n$ of all pairs $(I, J)$
 of (possibly empty) subsets of $\{1, \ldots , n\}$, with
 $I = \{i_1 < \ldots < i_p \}$ containing only odd integers 
  and $J=\{j_1 \leq \ldots \leq j_q\}$, with  $|J| \leq n$, such that
  $\,   i_0 \leq j_0$ and $\,  i_0 +  |J| > n$. Here $i_0= i_1$ if $I \neq \emptyset$ or
  $i_0 = \ify$ otherwise, and $j_0$ stands for the smallest odd integer in $J$ or
  $j_0 = \ify$ if there is none.

\begin{cor} \label{VObasis}
With $(I, J)$ running over the set $\Vc O_n$, the forms
 \begin{align*}
Tc_I(\hat\om_\nb) \wg c_J (\hat{\Om}_\nb) := 
Tc_{i_1}( \hat{\om}_\nb)&\wg \ldots \wg Tc_{i_p}(\hat{\om}_\nb) 
 \wg c_{j_1}(\hat{\Om}_\nb) \wg  \ldots \wg c_{j_q} (\hat{\Om}_\nb)   
\end{align*}
are closed and their classes form
a basis of $H_{\rm d}^{\bullet} (|\triangle_{\Gb}PM|, \Rb)$. 

The cocycles obtained by their integration along fibers,
 \begin{align} \label{basisO}
C_{I, J} (\nabla)\, := \circint_{\D^\bullet} Tc_I(\hat\om_\nb) \wg c_J (\hat{\Om}_\nb) \, , 
\quad (I,J) \in \Vc O_n  \, ,
 \end{align}
form a complete set of representatives for a basis of $H_{\rm d , \Gb}^{\bullet} (PM, \Rb)$.
 \end{cor}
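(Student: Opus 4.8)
The plan is to transport a basis of the relative truncated Weil cohomology $H^\bullet(\hat{W}(\Fg\Fl_n, \O_n)) \equiv H^\bullet(CW^\bullet(\Fa_n, \O_n))$ through the chain of (quasi-)isomorphisms already established, exactly as in Corollary~\ref{Vbasis}, the only new ingredient being the relative transgression \eqref{transodd}. By \eqref{transodd} the forms $Tc_{2k-1}(\hat{\om}_\nb)$ are $\O_n$-basic, hence descend to $PM = FM/\O_n$; together with the $\O_n$-basic even Chern forms $c_{2i}(\hat{\Om}_\nb)$ they generate the subalgebra $CW_{\rm d}^\bullet(|\bar\triangle_\Gb PM|)$, and under the isomorphism of Corollary~\ref{UCW1}(2a) the monomials in the statement are precisely the images $\Cc_\nb(Tc_I \wg c_J)$ of the corresponding elements of $CW^\bullet(\Fa_n, \O_n) \equiv \hat{W}(\Fg\Fl_n, \O_n)$.

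First I would check closedness directly in $\hat{W}(\Fg\Fl_n, \O_n)$. Each curvature factor $c_j(\hat{\Om}_\nb)$ is closed, so by the Leibniz rule and the relation $d\,Tc_{i_k}(\hat{\om}_\nb) = c_{i_k}(\hat{\Om}_\nb)$ from \eqref{transodd},
\[
d\big(Tc_{i_1}(\hat{\om}_\nb) \wg \cdots \wg Tc_{i_p}(\hat{\om}_\nb) \wg c_J(\hat{\Om}_\nb)\big) = \sum_{k=1}^{p} \pm\, Tc_{i_1}(\hat{\om}_\nb) \wg \cdots \wg c_{i_k}(\hat{\Om}_\nb) \wg \cdots \wg c_J(\hat{\Om}_\nb),
\]
where in the $k$-th summand the factor $Tc_{i_k}$ has been replaced by $c_{i_k}(\hat{\Om}_\nb)$. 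Each summand contains the curvature monomial $c_{i_k}(\hat{\Om}_\nb) \wg c_J(\hat{\Om}_\nb)$, of polynomial degree $i_k + |J| \ge i_1 + |J| > n$; such a monomial lies in the truncation ideal $\Ic_{2n}$ and so vanishes already in $\hat{W}(\Fg\Fl_n) = W(\Fg\Fl_n)/\Ic_{2n}$, hence a fortiori in its $\O_n$-basic subalgebra. Thus the product is closed, and this is precisely the purpose of the conditions $i_1 \le j_1$ and $i_1 + |J| > n$ defining $\Vc O_n$.

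Next I would identify the cohomology and transport the basis. By Theorem~\ref{main1}(2), $\Cc^{\O_n}_\nb : C^\bullet(\Fa_n, \O_n) \ra \Om_{\rm d}^\bullet(|\bar\triangle_\Gb PM|)$ is a quasi-isomorphism, while the inclusion $CW^\bullet(\Fa_n, \O_n) \hookrightarrow C^\bullet(\Fa_n, \O_n)$ is a quasi-isomorphism (the $\O_n$-relative form of the Gelfand-Fuks theorem, obtained by averaging over the compact group $\O_n$). Since these fit into a commutative square with the isomorphism of Corollary~\ref{UCW1}(2a), the inclusion $CW_{\rm d}^\bullet(|\bar\triangle_\Gb PM|) \hookrightarrow \Om_{\rm d}^\bullet(|\bar\triangle_\Gb PM|)$ is also a quasi-isomorphism, whence $H_{\rm d}^\bullet(|\triangle_\Gb PM|, \Rb) \cong H^\bullet(\Fa_n, \O_n)$. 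The classical Vey basis theorem (cf.~\cite{Godbillon}) asserts that $\{Tc_I \wg c_J : (I,J) \in \Vc O_n\}$ represents a basis of $H^\bullet(\hat{W}(\Fg\Fl_n, \O_n)) = H^\bullet(\Fa_n, \O_n)$, and Corollary~\ref{UCW1}(2a) carries it to the claimed basis of $H_{\rm d}^\bullet(|\triangle_\Gb PM|, \Rb)$. For the second assertion, $\Dc^{\O_n}_\nb = \circint_{\D^\bullet} \Cc^{\O_n}_\nb$ sends $Tc_I \wg c_J$ to $C_{I,J}(\nabla)$ by \eqref{basisO}, and its restriction to $CW^\bullet(\Fa_n, \O_n)$ is a quasi-isomorphism onto $\{C_{\rm d}^{\rm tot\,\bullet}(\Gb, \Om^\ast(PM)), \d \pm d\}$ by Corollary~\ref{UCW2}(2b); hence the $C_{I,J}(\nabla)$ form a complete set of representatives for a basis of $H_{\rm d, \Gb}^\bullet(PM, \Rb)$.

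The main obstacle is not the transport of classes, which is formal once the earlier theorems are in hand, but the two inputs specific to the relative case. First, one must confirm that the transgression \eqref{transodd} genuinely lands in the $\O_n$-basic subcomplex, so that $Tc_{2k-1}(\hat{\om}_\nb)$ descends to $PM$ and represents the odd generator of $\hat{W}(\Fg\Fl_n, \O_n)$; this rests on the symmetric part $\Fs(\hat{\om}_\nb)$ transforming tensorially under $\O_n$ and is the computation behind \cite[Prop.~5]{Guel}. Second, one must match the combinatorial conditions defining $\Vc O_n$ with those singling out the classical Vey basis of $H^\bullet(\Fa_n, \O_n)$; this is the genuinely nontrivial cohomological input, which I would cite from \cite{Godbillon} rather than reprove.
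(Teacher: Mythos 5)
Your proposal is correct and follows essentially the same route as the paper: the corollary is justified there by exactly this transfer of the classical relative Vey basis through Corollary \ref{UCW1}(2a) and Corollary \ref{UCW2}(2b), with the $\O_n$-basic transgression \eqref{transodd} taken from \cite{Guel} and the Vey basis itself cited from \cite{Godbillon}. Your explicit check of closedness via the truncation ideal $\Ic_{2n}$, and the commutative-square argument identifying $H_{\rm d}^{\bullet}(|\triangle_{\Gb}PM|,\Rb)$ with $H^\bullet(\Fa_n,\O_n)$, merely spell out details the paper leaves implicit under ``the same procedure applies to the relative case.''
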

  \smallskip
   
 In particular, the representatives of the Chern classes are the cocycles
 $C_{\emptyset, \{k\}}(\nabla) \equiv C_k (\hat{\Om}_\nb)$, with $k$ even.

%%%%%%%%%%%%%%%%%%%%%%%%%%%%%%%%%%%%%
 
   \section{Cyclic cohomological models for the Hopf algebra $\Hc_n$}  \label{Hn}

For the convenience of the reader, we collect here a modicum of salient facts 
from \cite{CM98, CM99, MR09, MR11} 
about the Hopf algebra $\Hc_n $ and the Hopf cyclic
cohomological models associated to it.

\subsection{Canonical representation and the standard cyclic model} \label{Canmod}

The Hopf algebra $\Hc_n $ serves as a ``quantum'' analogue of the structure 
group of the universal ``space of leaves'' for codimension $n$ foliations. As 
such, it arises naturally as the symmetry structure of the convolution algebra 
$C_c^{\ify}  (\bar{\G}_n)$ of the \'etale groupoid $\bar{\G}_n$ of germs of 
local diffeomorphisms of $\Rb^n$ acting by prolongation on the frame bundle $F\Rb^n$.
For the clarity of the exposition it is convenient to replace $C_c^{\ify}  (\bar{\G}_n)$ by 
the crossed product algebra $ {\Ac} = C_c^{\ify} (F\Rb^n ) \rtimes \Gb$, where
$\Gb = \Diff {\Rb}^n$ is treated as a discrete group.

In order to implement the operational construction of $\Hc_n$, one
identifies $F\Rb^n$ with the affine group $G = \Rb^n \times \GL_n$ 
in the obvious way, and one
endows it with the canonical form  $\t = ( \t^k) = ({\bf y}^{-1} \, dx)^k$ and with the
 flat connection   $\, \om = ( \om^i_j ) = ({\bf y}^{-1} \, d{\bf y})^i_j \,$. 
 With the usual summation convention,
 the basic horizontal vector fields are
 $X_k = y_k^{\mu} \part_{\mu}$, and the fundamental vertical vector fields are
 $Y_i^j = y_i^{\mu} \part_{\mu}^j$, $i, j, k =1, \ldots, n$,
 where $\displaystyle \part_{\mu} = \frac{\part} {\part x^{\mu}}, \,
\part_{\mu}^j = \frac{\part}{\part y_j^{\mu}}$ .
 The collection $\{ X_k, Y_i^j \}$ forms the standard
 basis  of the Lie algebra $\Fg$ of left-invariant vector fields on  $G$.
 
 The group $\Gb$ acts on $F\Rb^n$ by prolongation,
\begin{equation} \label{frameact}
\phi (x, {\bf y}) := \left( \phi (x), {\phi}^{\prime} (x) \cdot
{\bf y} \right) \, , \quad \text{where} \quad {\phi}^{\prime}(x)^{i}_{j} := 
\part_{j} \, {\phi}^{i}  (x) \, .
\end{equation}
The algebra $\Ac$ can be regarded as the subalgebra of the
endomorphism algebra $\End_{\Cb}\big(C_c^{\ify} (F\Rb^n )\big) $ generated by the
multiplication and the translation operators
\begin{equation*}
M_f (\xi)  = f \, \xi \, , \quad  U_{\vp}^* (\xi) = \xi \circ \vp  \, , \quad
  f, \, \xi \in C_c^{\ify} (F\Rb^n ) , \, \, \vp \in \Gb.
\end{equation*}
Letting the vector fields $Z \in \Fg$ act on $\Ac$ by 
\begin{equation*}
Z (f \, U_{\vp}^*) \, = \,Z( f) \, U_{\vp}^* , \qquad f
\, U_{\vp}^* \in {\Ac} \, ,
\end{equation*}
the resulting operators in $\Lc(\Ac)$ satisfy generalized Leibnitz rules, which in the 
Sweedler notation take the form
\begin{equation} \label{Zrule}
Z (a \, b) \, = \, \sum_{(Z)} Z_{(1)} (a) \, Z_{(2)}(b)  ,
\quad a, b \in {\Ac} .
\end{equation}
In particular, $X_k  \in \Lc ({\Ac}) $ satisfy
\begin{equation*} 
X_k(a \, b) \, = \, X_k (a) \, b \, + \, a \,X_k (b) \, + \,
\d_{jk}^i (a) \, Y_{i}^{j} (b) \, ,
\end{equation*}
where  \, $\d_{jk}^i (f \, U_{\phi}^*) \, =\, \g_{jk}^i  (\phi) \, f \, U_{\vp}^*$, with
\begin{equation} \label{gijk}
\g_{jk}^i (\phi) (x, {\bf y}) \, =\, \left( {\bf y}^{-1} \cdot
{\phi}^{\prime} (x)^{-1} \cdot \part_{\mu} {\phi}^{\prime} (x) \cdot
{\bf y}\right)^i_j \, {\bf y}^{\mu}_k \, ;
\end{equation}
The operators $\, \d_{jk}^i \in \Lc(\Ac)$ are derivations, but their
successive commutators with
the $X_{\ell}$'s yield operators
$\, \d_{jk \,  \ell_1 \ldots \ell_r}^i = [X_{\ell_r} , \ldots
[X_{\ell_1} , \d_{jk}^i] \ldots ]$,
 which involve multiplication by higher order jets of diffeomorphisms
\begin{align} \label{highg}
\begin{split} 
&\d_{jk \,  \ell_1 \ldots \ell_r}^i \, ( f \, U_{\phi}^*) =
 \g_{jk \,  \ell_1 \ldots \ell_r}^i (\phi)\, f \, U_{\phi}^* \qquad   \text{where}\\ 
 & \g_{jk \,  \ell_1 \ldots \ell_r}^i (\phi) =
 X_{\ell_r} \cdots X_{\ell_1} \big(\g_{jk}^i (\phi)\big)  \, , \quad \phi \in \Gb ,
 \end{split}
\end{align}
and satisfy progressively more elaborated Leibnitz rules.
 
The subspace $\Fh_n$ of $\Lc (\Ac)$ generated by the operators
$X_k$, $Y^i_j$, and  $\d_{jk \,  \ell_1  \ldots \ell_r}^i$
forms a Lie algebra $\Fh_n$. By definition, 
$\Hc_n$ is the algebra of operators in $\, \Lc (\Ac)$
generated by $\Fh_n$ and the scalars.
For $n >1$ the operators $\, \d_{jk \,  \ell_1 \ldots \ell_r}^i \,$
are not algebraically independent. They are subject to the ``structure identities''
\begin{equation}  \label{bianchi}
 \d_{j \ell \,  k}^i \, - \,  \d_{j k \,  \ell}^i \, = \,
 \d_{j k}^s \, \d_{s \ell}^i  \ - \, \d_{j \ell}^s \, \d_{s k}^i \, ,
\end{equation}
reflecting the flatness of the standard connection.

The algebra $\Hc_n$ is isomorphic to the quotient 
$\FA ({\Fh}_n)/\Ic$ of the universal enveloping
algebra $\FA ({\Fh}_n)$ by the ideal $\Ic$ generated by the above identities. It possesses
a distinguished character  $\d :\Hc_n \ra \Cb$, which 
 extends the modular character of
 ${\Fg \Fl}_n (\Rb)$, and is induced from the
 character of ${\Fh}_n$ defined by
 \begin{equation} \label{chard}
\d(Y_i^j) = \d_i^j , \quad  \d(X_k) = 0, \quad \d(\d_{jk \,  \ell_1 \ldots \ell_r}^i) =0 .
 \end{equation}

As coalgebras, $\Hc_n$ and $\FA ({\Fh}_n)$ differ drastically however.
The coproduct of $\Hc_n$  
stems from the interplay between the action of $\Hc_n$ and the
product in $\Ac$. More precisely, it confers to $ \Hc_n $ the only
Hopf algebra structure for which
$ \Ac $ is a left $\, \Hc_n $-module algebra. Concretely,
the formula (\ref{Zrule})  extends to all $h \in \Hc_n$,
\begin{equation} \label{HA}
 h (a  b) \ = \ \sum_{(h)} \, h_{(1)} (a) \, h_{(2)} (b) \, ,
\quad  h_{(1)}, h_{(2)} \in \Hc_n , \quad a, b \in \Ac \, ,
\end{equation}
and this uniquely determines a
{\it coproduct} $\D : \Hc \ra \Hc_n \ot \Hc_n$, by
\begin{equation*} 
\D ( h) \ = \ \sum_{(h)} \, h_{(1)} \ot h_{(2)} .
\end{equation*}
 The {\it counit} is \, $ \ve( h)  \, = \,  h (1)$, and there is a canonical
 {\it twisted antipode} determined by the canonical trace of
the crossed product algebra $ {\Ac}$, namely
 \begin{equation} \label{tr}
\tau \, (f \, U_{\vp}^* )  \, = \, \left\{ \begin{matrix}
\displaystyle
  \int_{F\Rb^n} \, f  \, \varpi \, , \quad \text{if}
\quad \vp = Id \, , \cr\cr \quad 0 \, , \qquad \qquad
\text{otherwise} \, ;
\end{matrix} \right.
\end{equation}
here $\varpi$  is the volume form attached to the canonical framing
given by the flat connection
$\varpi \, = \, \bigwedge_{k=1}^n \t^k \wedge \bigwedge_{(i, j)}
\om^i_j $ \, (ordered lexicographically).  
This trace is $\d$-invariant with respect to the action of $\Hc_n$, that is
\begin{equation} \label{it}
\tau (h(a)) \, = \,  \d(h)\, \tau(a) , \qquad h \in \Hc_n, \,  \, a \in \Ac\, .
\end{equation}
The Leibnitz rule \eqref{HA} together with the fact that the pairing $(a, b) \mapsto \tau(a\, b)$ 
 is non-degenerate also ensure the existence and uniqueness of
an anti-automorphism $ S_\d : \Hc_n \ra \Hc_n$,  $S_\d^2 \, = \,\Id$, 
 satisfying  
\begin{equation} \label{sit}
\tau (h(a) \, b) \, = \, \tau \, (a \, S_\d (h) (b)) , \quad h \in \Hc_n \, , \, a, b  \in \Ac ,
\end{equation}
as well as the involutive property
\begin{equation} \label{invant}
S_\d^2 \, = \, \Id .
\end{equation}
Finally, the {\it antipode} of $\Hc_n$ is $S = \check{\d} \ast S_\d $, where 
$\check{\d} $ is the convolution inverse of $\d$.
\smallskip

Using \eqref{sit},
the standard Hopf cyclic model for $\Hc_n$ is ``imported'' from the standard cyclic
model of the algebra $\Ac$, via the characteristic map
\begin{align} \label{char-map}
\begin{split} 
&h^1 \ot \ldots \ot h^q \in \Hc_n^{\ot^q}  \longmapsto
 \chi_{\tau} (h^1 \ot \ldots \ot h^q) \in C^q (\Ac)
\, , \\  
&\chi_{\tau} (h^1 \ot \ldots \ot h^q) (a^0 , \ldots , a^q) = 
 \tau (a^0  h^1 (a^1) \ldots h^q (a^q)) , \quad a^j \in \Ac .
\end{split}
\end{align}
This map is faithul and gives rise to a cyclic structure~\cite{Cext} on 
$\, \{ C^q (\Hc_n ; \delta) := \Hc_n^{\ot^q} \}_{q \geq 0}$, with faces, degeneracies and cyclic
 operator given by
\begin{eqnarray*} 
\d_0 (h^1 \ot \ldots \ot h^{q-1}) &=& 1 \ot h^1
\ot \ldots \ot h^{q-1} , \\  
\d_j (h^1 \ot \ldots \ot h^{q-1}) &=& h^1 \ot \ldots \ot \D h^j \ot
\ldots \ot h^{q-1}, \quad 1 \leq j \leq q-1 , \\  
\d_n (h^1 \ot \ldots \ot h^{q-1}) &=& h^1 \ot \ldots \ot h^{q-1}
\ot 1; \\
 \s_i (h^1 \ot \ldots \ot h^{q+1}) &=& h^1 \ot \ldots \ot \ve
(h^{i+1}) \ot \ldots \ot h^{q+1} , \quad 0 \leq i \leq q \, ; \\  
   \tau_q (h^1 \ot \ldots \ot h^q) &=& S_\d (h^1) \cdot (h^2
\ot \ldots \ot h^q \ot 1) \, .
\end{eqnarray*}
The cyclicity condition $\, \tau_q^{q+1} = \Id \,$ is satisfied precisely
 because of the involutive property \eqref{invant}, to which is actually equivalent.

The periodic Hopf cyclic cohomology $HP^\bullet (\Hc_n; \Cb_\d)$
of $\Hc_n$ with coefficients in the modular pair $(\d, 1)$ is, by definition
(cf. \cite{CM98, CM99}), the $\Zb_2$-graded cohomology of the total complex 
$\, CC^{\rm tot \bullet} (\Hc_n ; \Cb_\d)$ associated to
the bicomplex 
$\, \{C C^{*, *} (\Hc_n ;\Cb_\d), \, b , \, B \}$, where
\begin{equation*}  
CC^{p, q} (\Hc_n ; \Cb_\d) \, = \, \left\{
\begin{matrix}  C^{q-p} (\Hc_n; \Cb_\d) \, , \quad q \geq p \, , \cr
  0 \, ,  \quad \qquad  \qquad q <  p \, , \end{matrix}   \right.
\end{equation*}
 \begin{align*}  
b = \sum_{k=0}^{q+1} (-1)^k \d_k , \qquad
  B = ( \sum_{k=0}^q (-1)^{q \, k}\tau_q^k ) \sigma_{q-1}\tau_q  .
\end{align*} 

To define the periodic Hopf cyclic cohomology of $\Hc_n$ relative to $\O_n$,
one considers the quotient
 $\Qc_n = \Hc_n \ot_{\Uc (\Fo_n)} \Cb \equiv \Hc_n / \Hc_n \Uc^+ (\Fo_n)$, which is
 an $\Hc_n$-module coalgebra with respect to the coproduct and counit
 inherited from $\Hc_n$.
Then $\{ C^q  (\Hc_n, \O_n ; \Cb_\d) : = \, \big(\Qc_n^{\ot q}\big)^{\O_n} \}_{q \geq 0}$,
 is endowed with a cyclic structure given by restricting to $\O_n$-invariants the operators
 \begin{eqnarray*}  
 \d_0 (c^1 \ot \ldots \ot c^{q-1}) &=& 
  \dot{1} \ot c^1 \ot \ldots \ot
  \ldots \ot c^{q-1} ,   \\  
\d_i  (c^1 \ot \ldots \ot c^{q-1})  &=& 
c^1 \ot \ldots \ot \D c^i \ot \ldots \ot c^{q-1} , \quad 1 \leq i \leq q-1; \\ 
\d_n  (c^1 \ot \ldots \ot c^{q-1})  &=& c^1 \ot \ldots \ot c^{q-1}
\ot \dot{1}\, ; \\  
 \s_i  (c^1 \ot \ldots \ot c^{q+1})  &=& c^1 \ot \ldots \ot \ve
(c^{i+1}) \ot \ldots \ot c^{q+1} ,\quad 0 \leq i \leq q \, ; \\   
   \tau_q ( \dot{h}^1 \ot c^2 \ot \ldots \ot c^q) &=& 
    {S_\d}(h^1) \cdot (c^2
   \ot \ldots \ot c^q \ot \dot{1}).
\end{eqnarray*}
The resulting periodic cyclic cohomology is denoted 
$HP^\bullet (\Hc_n, \O_n ; \Cb_\d)$.
 
 \subsection{Bicrossed product and Chevalley-Eilenberg cyclic model} \label{bcp}
 
The Hopf algebra $\Hc_n$ can be reconstructed as bicrossed product of a 
 matched pair of Hopf algebras of classical type (cf. \cite{CM98, MR09}).
 This structure arises naturally
 from the canonical splitting of the group ${\Gb}$ as a set-theoretical
  product $ {\Gb} = G \cdot \Nbo$
 of the group  $G$ of  affine motions of $\Rb^n$ and the group 
 \begin{equation*} 
\Nbo = \{ \psi \in {\Gb} ; \quad \psi (0) = 0, \, \,  \psi' (0) = \Id \} .
   \end{equation*}
If $\phi \in {\Gb} $ and $\, \phi_0 := \phi - \phi (0)$, then its canonical decomposition
is
 \begin{equation} \label{Kac1}
\phi \, = \,  \vp \circ \psi \, , \qquad \vp \in G , \, \psi \in \Nbo ,
 \end{equation}
where
 \begin{align}    \label{Kac2}
 \begin{split}
 \vp (x)&= \, \phi'_0 (0) \cdot x \, + \, \phi (0) , \quad x \in \Rb^n \\
  \psi (x)& = \,   \phi'_0 (0)^{-1} \left(\phi (x) - \phi (0)\right) .
   \end{split}
 \end{align}
Reversing the order in the above decomposition one simultaneously
obtains a pair of well-defined operations, one of $\Nbo$ on $G$ and the other of $G$ on
$\Nbo$:
 \begin{equation} \label{actions}
  \psi \circ \vp \, = \,  (\psi \rt \vp) \circ  (\psi \lt \vp) ,
  \qquad \text{for} \quad \vp \in G \quad \text{and} \quad \psi \in \Nbo .
 \end{equation}
 The operation  $\rt$ is a left action of $\Nbo$ on $G$, and $\lt$ is
a right action of $G$ on $\Nbo$.  Via the identification  $\, G \simeq F\Rb^n$, one 
recognizes $\rt$ as being exactly the action by prolongation \eqref{frameact}.
 
\smallskip
 
To reconstruct $\Hc_n$ one actually uses the pronilpotent group of jets
\begin{align*}
\FN \, : = \, \{j_0^\ify (\psi) \, \mid \, \psi \in \Nbo \} ,
\end{align*}
on which  the jet components are regarded as affine coordinates. 
Thus, the algebra  $\Fc$ of regular functions on $\FN$ consists of 
polynomial expression in the coordinates
\begin{equation*}
\a^i_{j {j_1}j_2\dots j_r}(\psi)= \p_{j_r}\dots \p_{{j_1}} \p_j
\psi^i(x)\mid_{x=0} , \, 1\leq i, j, {j_1}, j_2, \dots,  j_r \leq n .
\end{equation*}
Since  $\, \a^i_j (\psi) = \d^i_j$,  
and for $r \geq 1$ the coefficients $\a^i_{j {j_1}j_2\dots
j_r} (\psi)$ are symmetric in the lower indices but otherwise
arbitrary,  $\Fc$ can be viewed as the free commutative
algebra over $\Cb$ generated by the indeterminates $\{\a^i_{j
{j_1}j_2\dots j_r} ; \, 1 \leq j < {j_1} <j_2 < \dots <  j_r \leq n \}$.
\smallskip

The algebra $\Fc$ inherits from the group $\FN$ a canonical
Hopf algebra structure, in the standard fashion,
with the coproduct $\, \D : \Fc \ra \Fc \ot \Fc $, the antipode 
$\, S : \Fc \ra \Fc$, and the counit $\ve : \Fc \ra \Cb$ determined
by  
\begin{eqnarray*}  
\D(f)(\psi_1, \psi_2) &=& f(\psi_1 \circ\psi_2) , \qquad 
\psi_1, \psi_2 \in N , \\ \notag 
S(f)(\psi)  &=& f(\psi^{-1}) ,
\qquad  \psi \in N, \quad f \in \Fc , \\ \notag
 \ve (f) &=& f(e) .
\end{eqnarray*} 
The coefficients of the Taylor expansion at $e \in G$ of the prolongation of $\psi \in \Nbo$,
\begin{equation} \label{fcoord}
 \eta^i_{j k \ell_1 \ldots \ell_r} (\psi) =  \g^i_{j k \ell_1 \ldots
\ell_r} (\psi)(e) ,  
 \end{equation}
are easily seen to be regular functions on $\FN$, which also generate $\Fc$ as an algebra.
 Letting $ {\Hc_{\rm ab}}$ denote the (commutative) Hopf subalgebra of
$\Hc_n$ generated by the operators
 $\{ \d^i_{j k \ell_1 \ldots \ell_r} ; \, 1\leq i, j,k, \ell_1,  \dots,  \ell_r \leq n \}$,
 one proves using the structure identities \eqref{bianchi}
  that the assignment $ \etabar : \, {\Hc_{\rm ab}}^{\rm cop} \ra \Fc^{\rm cop}$,
\begin{equation} \label{iota}
\etabar (\d^i_{j k \ell_1 \ldots \ell_r}) \, = \, \eta^i_{j k \ell_1
\ldots \ell_r} , \qquad \fl \, 1\leq i, j,k, \ell_1,  \dots,  \ell_r
\leq n \ .
\end{equation} 
defines an isomorphism of Hopf algebras.

 %%%%%%%%%%%%%%%%%%%%%%%%%%%%
  With $\Fg $ denoting the Lie algebra of $G$,
  let  $\Uc := \Uc(\Fg)$ be its universal enveloping algebra.
  The right action $\lt $ of $G$ on $\Nbo$ induces an action of $\Fg$ on $\Fc$,
 \begin{eqnarray}\label{u>f}
(X \rt f)(\psi) = \frac{d}{dt}\mid_{t=0} f (\psi \lt \exp tX) ,
\quad f \in \Fc , \, X \in \Fg.
\end{eqnarray}
 and hence a left action $\rt $ of $\Uc$ on $ \Fc$. Explicitly,
 for any $u \in \Uc $,
 \begin{equation}  \label{giota}
(u \rt \eta^i_{j k \ell_1 \ldots \ell_r}) (\psi)\, = \,u
\big(\g^i_{j k \ell_1 \ldots \ell_r} (\psi)\big)(e) , \qquad \psi
\in \Nbo .
\end{equation}
The right hand side of \eqref{giota}, before evaluation at  $e \in
G$, describes the effect of the action of $u \in \Uc $ on
$\d^i_{j k \ell_1 \ldots \ell_r} \in  {\Hc_{\rm ab}}$. From the very definition 
\eqref{iota}, it follows that $\, \etabar :  {\Hc_{\rm ab}} \ra \Fc$
  identifies the  $\,\Uc $-module  $ {\Hc_{\rm ab}}$
  with the $\,\Uc $-module $\Fc$. In particular $\, \Fc$ is
  $\,\Uc $-module algebra.
 \smallskip

On the other hand, $\Uc $ carries a natural right $\Fc$-comodule structure
$\,\Db:\Uc  \ra\Uc  \ot \Fc$, which can be suggestively described by assigning to each
element $\, u \in \Uc $ a function from $\FN$ to $\Uc $ defined by
   \begin{equation} \label{comod3}
({\Db} u)  (\psi) \, = \,  \tilde{u} (\psi) (e) , \quad \text{where}
\quad
 \tilde{u} (\psi) \, = \,  U_\psi \, u\, U^{\ast}_\psi .
 \end{equation}
This coaction actually endows
 $\Uc (\Fg)$ with the structure of a right $\Fc$-comodule coalgebra.

Thus equipped, $\Uc$ and $\Fc$ form a matched pair
of Hopf algebras, i.e. (with the usual conventions of notation)
satisfy the compatibility conditions
 \begin{align} \notag
&\epsilon(u\rt f)=\epsilon(u)\epsilon(f), \qquad u\in\Uc, \,  f\in \Fc\\  \notag
&\Delta(u\rt f)=u\ps{1}\ns{0} \rt f\ps{1}\ot
u\ps{1}\ns{1}(u\ps{2}\rt f\ps{2}), \\   \notag
 &\Db(1)=1\ot
1, \\  \notag
 &\Db(uv)=u\ps{1}\ns{0} v\ns{0}\ot
u\ps{1}\ns{1}(u\ps{2}\rt v\ns{1}),\\   \notag
 &u\ps{2}\ns{0}\ot
(u\ps{1}\rt f)u\ps{2}\ns{1}=u\ps{1}\ns{0}\ot
u\ps{1}\ns{1}(u\ps{2}\rt f).
\end{align}
One can then form the bicrossed product Hopf algebra $\Fc\acl \Uc$,  
which has the crossed coproduct
 $\Fc\cl \Uc$ as underlying coalgebra, the crossed product
$\Fc\al \Uc$ as underlying algebra, and whose the antipode is given by
\begin{equation} \notag
S(f\acl u)=(1\acl S(u\ns{0}))(S(fu\ns{1})\acl 1) .
\end{equation}
The reconstruction of $\Hc_n$ is now made precise by
the statement that the Hopf algebras $\Fc\acl \Uc$ and ${\Hc_n}^{\rm cop}$ are
canonically isomorphic~\cite[Thm. 2.15]{MR09}, via the identification which at the level of vector
spaces can be described as 
$\, \etabar^{-1} \ot \Id_\Uc : \Fc\acl \Uc \ra {\Hc_n}^{\rm cop}$.

 \bigskip
 
%%%%%%%%%%%%%%%%%%%

Exploiting the bicrossed product structure, and taking advantage of the 
extended framework for Hopf cyclic cohomology with coefficients~\cite{hkrs},
the complex $\, CC^{\rm tot \bullet} (\Hc_n ; \Cb_\d)$ can be replaced (cf.
\cite{MR09, MR11}) by quasi-isomorphic  
bi-cyclic complexes. The latter amalgamate
two classical types of cohomological constructs,
Lie algebra cohomology with coefficients and coalgebra cohomology
with coefficients, with the essential distinction though
that the coefficients are not only acted upon but also `act back'.
  
The first such bicomplex $C^{\bullet, \bullet}(\wg \Fg^\ast, \bigotimes\Fc)$ 
is described by the diagram
\begin{align} \label{C3}
\begin{xy} \xymatrix{  \vdots & \vdots
 &\vdots &&\\
 \wdg^2\Fg^\ast  \ar[u]^{\p_{\Fg}}\ar@<.6 ex>[r]^{b_\Fc~~~~~~~}& \ar@<.6 ex>[l]^{~~B_\Fc} \wdg^2\Fg^\ast\ot\Fc \ar[u]^{\p_{\Fg}} \ar@<.6 ex>[r]^{b_\Fc}& \ar@<.6 ex>[l]^{~~B_\Fc}\wdg^2\Fg^\ast\ot\Fc^{\ot 2} \ar[u]^{\p_{\Fg}} \ar@<.6 ex>[r]^{~~~~~~~~~b_\Fc} & \ar@<.6 ex>[l]^{~~B_\Fc}\hdots&  \\
 \Fg^\ast  \ar[u]^{\p_{\Fg}}\ar@<.6 ex>[r]^{b_\Fc~~~~~}& \ar@<.6 ex>[l]^{~~B_\Fc} \Fg^\ast\ot\Fc \ar[u]^{\p_{\Fg}} \ar@<.6 ex>[r]^{b_\Fc}& \ar@<.6 ex>[l]^{~~B_\Fc} \Fg^\ast\ot \Fc^{\ot 2} \ar[u]^{\p_{\Fg}} \ar@<.6 ex>[r]^{~~~~~b_\Fc }&\ar@<.6 ex>[l]^{~~B_\Fc} \hdots&  \\
 \Cb  \ar[u]^{\p_{\Fg}}\ar@<.6 ex>[r]^{b_\Fc~~~~~~~}& \ar@<.6 ex>[l]^{~~B_\Fc} \Cb\ot \Fc \ar[u]^{\p_{\Fg}}\ar@<.6 ex>[r]^{b_\Fc}& \ar@<.6 ex>[l]^{~~B_\Fc} \Cb\ot \Fc^{\ot 2} \ar[u]^{\p_{\Fg}} \ar@<.6 ex>[r]^{~~~~~b_\Fc} &\ar@<.6 ex>[l]^{~~B_\Fc} \hdots& ;}
\end{xy}
\end{align}
 the coboundary $\p_\Fg $ involves the action of $\Fg$ on the 
coefficients $\Cb_\d\ot\Fc^{\ot q}$,
while $b_\Fc$ and $B_\Fc$ involve the coaction $\Db_\Fg$. 
More precisely,
\begin{align*} 
\begin{split}
&b_\Fc(\one\ot\a\ot f^1\odots f^q)\, =\, \one \ot\a\ot  1\ot f^1\odots f^q\\
&\qquad + \sum_{1\ge i\ge q} (-1)^i \one \ot\a\ot  f^1\odots \D(f^i)\odots f^q\\
&\qquad + (-1)^{q+1}\one\ot\a\ns{0}\ot   f^1\odots f^q\ot  S(\a\ns{1})  ; \\
&B_\Fc= \big(\sum_{i=0}^{q-1}(-1)^{(q-1)i}\tau_\Fc^{i}\big) \s \tau_\Fc  (\Id - (-1)^q \tau_\Fc), 
\qquad \text{with}  \\  
&\tau_\Fc(\one\ot \a\ot f^1\odots f^q)=\one\ot\a\ns{0}\ot
S(f^1)\cdot(f^2\odots f^q\ot S(\a\ns{1}))\\ 
&\text{and} \quad \s(\one\ot\a\ot f^1\odots f^q)= \ve(f^q)\ot\a\ot f^1\odots f^{q-1} .
\end{split}
\end{align*}

The above bicomplex has a  homogeneous version 
$C^{\bullet, \bullet}_\Fc (\wg \Fg^\ast, \bigotimes\Fc)$, with
 \begin{equation}\label{coinv}
C^{p,q}_{\Fc}(\wg \Fg^\ast, \bigotimes\Fc):= ( \wg^p\Fg^\ast\ot  \Fc^{\ot q+1})^\Fc  ,
\end{equation}
defined as follows. An element  $ \sum \a\ot \td f\in ( \wg^p\Fg^\ast\ot  \Fc^{\ot q+1})^\Fc $
if it satisfies the $\Fc$-coinvariance condition:
\begin{equation*} 
\sum \a\ns{0}\ot \;\td{f}\ot S(\a\ns{1})=\sum \; \a\ot\td{f}\ns{0}\ot \td{f}\ns{1} ;
\end{equation*}
here for  $\td f=f^0\odots f^q$, we have denoted
\begin{align*} 
\td f\ns{0}\ot \td f\ns{1}\, = \, f^0\ps{1}\odots
f^q\ps{1}\ot f^0\ps{2}\cdots f^q\ps{2}.
\end{align*}
The identification between the two complexes 
is made by the isomorphism 
 \begin{align*} 
\begin{split}
 &\Ic:  \wg^p\Fg^\ast\ot\Fc^{\ot q}\xrightarrow{\simeq} (\wg^p\Fg^\ast\ot\Fc^{\ot q+1})^\Fc , \qquad
\qquad  \Ic( \a\ot \td f) \,=\\
 &\a\ns{0}\ot f^1\ps{1}\ot S(f^1\ps{2})f^2\ps{1}\odots S(f^{q-1}\ps{2})f^q\ps{1}  \ot S(\a\ns{1} f^q\ps{2}).
 \end{split}
\end{align*}

\smallskip

A closely related bicomplex replaces the tensor powers of the algebra $\Fc$ with 
homogeneous cochains on the group of jets $\FN$ with values in
 $\wg  \Fg^\ast$, namely $\bar{C}^{\bullet, \bullet}(\FN,\wg \Fg^\ast) $, defined as follows:
 \begin{align}  \label{g*N}
\begin{split}
& \bar{C}^q(\FN,\wedge^p \Fg^\ast) = \{ c:  {\FN}^{q+1} \ra \wedge^p\Fg^\ast  \mid \\
&c (\psi_0 \psi, \dots ,\psi_q \psi)= \,\psi^{-1} \rt c (\psi_0,\dots,\psi_q) , \, \, \fl \, \psi \in \FN \} ,
\end{split}
\end{align} 
with boundary operators $\bar{\p} $ and $\bar{B}$,
\begin{align*} 
\begin{xy} \xymatrix{ \vdots & \vdots
 &\vdots  & \\
\bar{C}^0 \big(\FN , \wdg^2\Fg^\ast \big)
  \ar@<.6 ex>[r]^{\bar{b}} \ar[u]^{\bar{\p}}&\ar@<.6 ex>[l]^{\bar{B}}
\bar{C}^1 \big(\FN , \wdg^2\Fg^{\ast}  \big)
  \ar@<.6 ex>[r]^{\bar{b}} \ar[u]^{\bar{\p}}
 & \ar@<.6 ex>[l]^{\bar{B}}\bar{C}^2 \big(\FN ,  \Fg^{\ast}  \big)
 \ar[u]^{\bar{\p}}  \ar@<.6 ex>[r]^{\bar{b}}& \ar@<.6 ex>[l]^{\bar{B}}\hdots\\
\bar{C}^0 \big(\FN , \Fg \big)
 \ar@<.6 ex>[r]^{\bar{b}} \ar[u]^{\bar{\p}}& \ar@<.6 ex>[l]^{\bar{B}}\bar{C}^1 \big(\FN , \Fg^{\ast}  \big)
\ar@<.6 ex>[r]^{\bar{b}} \ar[u]^{\bar{\p}}& \ar@<.6 ex>[l]^{\bar{B}}\bar{C}^2 \big(\FN ,  \Fg^{\ast}\big)
\ar[u]^{\bar{\p}} \ar@<.6 ex>[r]^{\bar{b}}& \ar@<.6 ex>[l]^{\bar{B}}\hdots\\
\bar{C}^0 \big(\FN , \Cb \big) \ar@<.6 ex>[r]^{\bar{b}}\ar[u]^{\bar{\p}}& \ar@<.6 ex>[l]^{\bar{B}} \bar{C}^1
\big(\FN , \Cb  \big)
 \ar@<.6 ex>[r]^{\bar{b}}\ar[u]^{\bar{\p}}& \ar@<.6 ex>[l]^{\bar{B}}\bar{C}^2 \big(\FN , \Cb \big)\ar[u]^{\bar{\p}}
\ar@<.6 ex>[r]^{\bar{b}}& \ar@<.6 ex>[l]^{\bar{B}} \hdots  }
\end{xy}
\end{align*}
 defined as follows: 
\begin{align*} 
\begin{split}
&{(\bar{\p} c) (\psi_0, \dots, \psi_{q}) =\p c(\psi_0, \dots, \psi_{q}) -
 \sum_k \alpha^k\wdg (Z_k\rt c)(\psi_0, \dots, \psi_{q}) }\\
 &\text{where} \, \{Z_k\} \, \text{and} \, \{\alpha_k\} \, \text{are dual bases of} \,  \Fg \, \text{and} \, \Fg^* , 
 \quad \text{and} \\
 &(Z\rt c)(\psi_0, \dots, \psi_{q})= \sum_i \dt c(\psi_0, \dots, 
 {\psi_{i}\lt\exp(tZ)}, \dots, \psi_{q}) ;\\
&\bar{b} c (\psi_0, \dots, \psi_{q+1}) =\sum_{i=0}^{q+1}(-1)^i
c (\psi_0,\dots,\hat{\psi_i},\dots, \psi_{q+1}) ; \\
& \bar{B}= (\sum_{i=0}^{q-1}(-1)^{(q-1)i}\bar{\tau}^{i}) \bar{\s} \bar{\tau} , \qquad
\bar{\tau} (c)(\psi_0, \dots, \psi_q)= c(\psi_1,  \dots, \psi_q,\psi_0), \\
&\text{and} \qquad \qquad 
\bar{\s}(c)(\psi_0,\dots, \psi_{q-1})=c(\psi_0,\dots, \psi_{q-1}, \psi_{q-1}) .
\end{split}
\end{align*} 
 It is isomorphic to the bicomplex \eqref{coinv} via the chain map
 \begin{align*}  
\begin{split}
& \kappa: C^{\bullet, \bullet}_\Fc (\wg \Fg^\ast, \bigotimes\Fc)
\ra \bar{C}^\bullet (\FN,\wedge^p \Fg^\ast) , \\
&\kappa\big(\sum \a\ot \td f\big) \,(\psi_0,\dots, \psi_q) \,=\,\sum f^0(\psi_0)\dots f^q(\psi_q)\a .
\end{split}
\end{align*}
\smallskip
 
Finally, since $\Fc$ is commutative  one can restrict both sides to the 
quasi-isomorphic subcomplexes of totally antisymmetric cochains
 \begin{align} \label{grp-iso-a}
 \kappa_\wg := \kappa \circ \a_\Fc: C^{\bullet, \bullet}_\Fc (\wg \Fg^\ast, \wg\Fc)
\ra \bar{C}_\wg^\bullet (\FN,\wedge \Fg^\ast) , 
 \end{align}
where $\a_\Fc : C^{\bullet, \bullet}_\Fc (\wg \Fg^\ast, \wg\Fc) \ra
C^{\bullet, \bullet}_\Fc (\wg \Fg^\ast, \bigotimes\Fc) $ is the
antisymmetrization map.
Diagrammatically,
\begin{align*} 
\begin{xy} \xymatrix{  \vdots & \vdots
 &\vdots &&\\
 \wdg^2\Fg^\ast  \ar[u]^{\p_{\wg}}\ar[r]^{b_{\wg}~~~~~~~}&  (\wdg^2\Fg^\ast\ot\wdg^2 \Fc)^\Fc \ar[u]^{\p_{\wg}} \ar[r]^{b_{\wg}}& (\wdg^2\Fg^\ast\ot\wdg^3\Fc)^\Fc \ar[u]^{\p_{\wg}} \ar[r]^{~~~~~~~~~b_{\wg}} & \hdots&  \\
 \Fg^\ast  \ar[u]^{\p_{\wg}}\ar[r]^{b_{\wg}~~~~~}&  (\Fg^\ast\ot\wdg^2 \Fc)^\Fc \ar[u]^{\p_{\wg}} \ar[r]^{b_{\wg}}& (\Fg^\ast\ot\wdg^3\Fc)^\Fc \ar[u]^{\p_{\wg}} \ar[r]^{~~~~~b_{\wg} }& \hdots&  \\
 \Cb  \ar[u]^{\p_{\wg}}\ar[r]^{b_{\wg}~~~~~~~}&  (\Cb\ot\wdg^2 \Fc)^\Fc \ar[u]^{\p_{\wg}} \ar[r]^{b_{\wg}}& (\Cb\ot\wdg^3\Fc)^\Fc \ar[u]^{\p_{\wg}} \ar[r]^{~~~~~b_{\wg}} & \hdots&,  }
\end{xy}
\end{align*}
The boundary operators of the bicomplex 
$C^{\bullet, \bullet}_\Fc (\wg \Fg^\ast, \bigotimes\Fc) $ acquire a simpler form
when restricted to $C^{\bullet, \bullet}_\Fc (\wg \Fg^\ast, \wg\Fc)$.
Thus, $B_\Fc =0$ and the others are given by
 \begin{align*}
\begin{split}
&b_{\wg}(\a\ot f^0\wdots f^q)= \a\ot 1\wdg f^0\wdots f^q \, ; \\
&\p_{\wg}(\a\ot f^0\wdots f^q= \\
&\qquad\p\a\ot f^0\wdots f^q \, -\, \sum_k \alpha^k\wdg \a\ot \wedge\ot Z_k\rt(f^0\wdots f^q) .
\end{split}
\end{align*}
The total cohomology of the above bicomplex is canonically isomorphic to 
$HP^\bullet (\Hc_n ; \Cb_\d)$ and will be denoted by $HP_{\acl}^\bullet(\Hc_n)$.

\smallskip
 
The relative (to $\O_n$) version of the above cohomology will be denoted $HP_{\acl}^\bullet(\Hc_n, \O_n)$.  
It is canonically isomorphic to 
$HP^\bullet (\Hc_n , \O_n; \Cb_\d)$, via the quasi-isomorphism of relative complexes 
obtained by restricting
to $\O_n$-basic cochains on both sides. This amounts to replacing
$\Fg = \Rb^n \rtimes \Fg\Fl_n (\Rb)$ by
$\Fg/\Fo_n $, and then restricting to $\O_n$-invariant
cochains. The group $\O_n$ acts on $\FN$ by the restriction of the right action of $\Gb$.
The chain map $ \kappa_\wg $ induces a quasi-isomorphism
 \begin{align} \label{grp-iso-rel}
 \kappa_\wg^{\O_n}  : C^{\rm tot \bullet}_\Fc (\wg (\Fg/\Fo_n)^\ast, \wg\Fc)^{\O_n}
\ra \bar{C}_\wg^{\rm tot \bullet} (\FN,\wedge (\Fg/\Fo_n)^\ast)^{\O_n} . 
 \end{align}

  \section{Hopf cyclic characteristic cocycles} \label{HCCC}
  
For the transfer of the characteristic cocycles to Hopf cyclic cohomology
we shall use two analogues of the classical van Est isomorphism. 
The first one, recalled below, was established in \cite{CM98} 
and provided the means to 
identify the Hopf cyclic cohomology of $\Hc_n$ with 
 the Gelfand-Fuks cohomology of the Lie algebra $\Fa_n$.
The second one, derived in \S \ref{HtoDC}, identifies
the Hopf cyclic cohomology to the differentiable cohomology.
The transferral proper of the characteristic cocycles is then achieved in \S \ref{TCC}.

  \subsection{From Lie algebra to Hopf cyclic cohomology}  \label{LtoH}

We begin by recalling the first quasi-isomorphism, in the form refined in \cite{MR11}.
The setting is very similar to that
described in \S \ref{DEC}, only here it is specialized to $M=\Rb^n$, endowed with
the standard flat connection, still denoted by $\nabla$. 
As in \S \ref{bcp},
we identify $F\Rb^n$ and the affine group $G$.
\smallskip

Instead of the map $\, \hat{\s} : |\bar\triangle_{\Gb} FM| \ra F^\ify M$ of \eqref{xchng2},
we now consider the map $\, \hat{\vars} : |\bar\triangle_{\Nbo} F\Rb^n| \ra F^\ify \Rb^n$, whose 
simplicial components are defined, in homogeneous group coordinates, by
\begin{align} \label{vars}
\begin{split}
&\vars_p (\tb ; \psi_0 , \ldots , \psi_p, \vp) \, = \,
 \vp \cdot \big(\sb_{(\psi_0,\dots,\psi_p)}  (\tb) \lt \vp \big)^{-1},  \quad  \text{where} \\
& \sb_{(\psi_0,\dots,\psi_p)}  (\tb) \, = \,
    \sum_{i=0}^p t_i \, j_0^\infty(\psi_i) , \quad \vp \in G, \, \, \psi_0,\dots,\psi_p \in \Nbo .
\end{split}
\end{align}
The composition $\, \displaystyle \Dc \, = \, \circint_{\D^\bullet} \circ \hat{\vars}^* $
defines a new map of complexes $ \Dc : C^\bullet (\Fa_n) \ra
 \bar{C}_{\rm d}^{\rm tot \, \bullet} \left(\Nbo, \Om^\ast (FM)\right)$, which
 satisfies the enhanced covariance property 
\begin{align} \label{Dcov}
\Dc  (\om) (\psi_0 \lt \phi,\dots,\psi_p \lt \phi) =
\phi^\ast \left(\Dc (\om) (\psi_0,\dots, \psi_p)\right), \, \, \fl   \phi \in \Gb.
\end{align} 
Taking $\phi \in G$, this relation shows that
 $\Dc (\om)$ is completely determined by its values at the identity $e \in G$.
One is led then to define
  \begin{align} \label{E}
\Ec (\om) (\psi_0,\dots, \psi_p): = \, \Dc (\om)(\psi_0,\dots, \psi_p)
 \mid_{\vp=e} \,
  \in \, \wedge^\bullet \Fg^\ast.
\end{align}
A more explicit expression for $\Ec (\om) $ is obtained as follows.
Fix  a basis $\{\a_k\}$ of $\Fg^*$, and denote by $\{\td\a_k\}$ the corresponding
 left invariant forms on $G$, and by $\{Z^k\}$ be the dual basis 
 of left invariant vector fields. 
Define $\,  \nu(\vp, \psi) : = \, \vp \circ (\psi \lt \vp)^{-1}$, and let $\imath :\Gb \ra \Gb$ 
be the inversion map $\imath(\rho) = \rho^{-1}$. Then
 \begin{align*}  
 \begin{split}
 \Ec (\om) &= \circint_{\D^\bullet}  \hat{\vars}^*\big(\mu (\om) \big) , 
 \quad  \text{where} \quad \mu (\om) \, = \,  \sum_{|I| = r}
\imath^\ast \big( \iota_{Z_e^I} \nu^\ast ({\tilde\om})\big) \ot \a_I ,\\
&  \text{with} \quad I =(i_1 < \ldots < i_r) \quad \text{and} \quad \a_I = \a_{i_1} \wg \ldots \wg \a_{i_r} ,
 \end{split}
 \end{align*}
 or in a more suggestive notation,
 \begin{align}  \label{dupont3}
 \Ec (\om) (\psi_0,\dots, \psi_p) \, = \, \int_{\D (\psi_0,\dots, \psi_p)}  \mu (\om) .
\end{align} 
The way in which $\Dc (\om)$ can be recovered from $\Ec (\om)$ is made 
precise by the following identity:  
\begin{align*} 
 \begin{split}
\Dc  (\om) (\psi_0,\dots, \psi_p) \mid_\vp &= \, 
\int_{\D (\psi_0 \lt \vp,\dots, \psi_p \lt \vp)}  \td{\mu} (\om) \mid_\vp \, , \\
 \quad  \text{where} \quad
 \td{\mu} (\om) &= \sum_{|I| = r} \imath^\ast \big( \iota_{Z_e^I} \nu^\ast ({\tilde\om})\big) \ot \td\a_I .
  \end{split}
\end{align*} 
 Note that the only difference between $\mu(\om)$ and $ \td{\mu} (\om)$ is the replacement
of the $\a_I \in \wedge^\bullet \Fg^\ast$  by the associated left invariant forms
$\td{\a}_I \in \Om^\bullet (G)$. 

Thus, denoting by $L_\vp$ the left translation by $\vp \in G$,
 the above identity can be stated in the equivalent form 
\begin{align} \label{EtoD}
 \Dc  (\om) (\psi_0,\dots, \psi_p) \mid_\vp \, = \,
 L_\vp^\ast \big( \Ec  (\om) (\psi_0 \lt \vp,\dots, \psi_p \lt \vp) \big) .
\end{align}
\smallskip

The first part of the van Est theorem, proved in~\cite{CM98} and in the form stated below
in~\cite{MR11}, can be formulated as follows.

\begin{thm} \label{vE1}
 For any $\om \in C(\Fa_n)$,  
$\Ec (\om) \in C^{\bullet}_\Fc (\wg \Fg^\ast, \wg\Fc)$  
and the resulting map 
$\, \Ec : C^\bullet (\Fa_n) \ra C^{\rm tot \bullet}_\Fc (\wg \Fg^\ast, \wg\Fc)$ 
 is a quasi-isomorphism.
The induced map 
$\, \Ec^{\O_n} : C^\bullet (\Fa_n, \O_n) \ra 
C^{\rm tot \bullet}_\Fc (\wg (\Fg/\Fo_n)^\ast, \wg\Fc)^{\O_n}$ 
is also a quasi-isomorphism.
\end{thm}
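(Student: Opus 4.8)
The plan is to build on what has already been assembled: the composite $\Dc = \circint_{\D^\bullet} \circ \hat{\vars}^\ast$ is a map of complexes into the differentiable $\Nbo$-equivariant Bott complex $\bar{C}_{\rm d}^{\rm tot\,\bullet}(\Nbo, \Om^\ast(F\Rb^n))$, it obeys the covariance \eqref{Dcov}, and it is recovered from its restriction $\Ec$ to $\vp = e$ through \eqref{EtoD}. First I would verify that $\Ec(\om)$ genuinely lands in $C^{\rm tot\,\bullet}_\Fc(\wg\Fg^\ast, \wg\Fc)$, most transparently by checking that it defines a homogeneous cochain in $\bar{C}_\wg^\bullet(\FN, \wg\Fg^\ast)$ and then transporting it by the isomorphism $\kappa_\wg$ of \eqref{grp-iso-a}. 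Three points need attention: that $\Ec(\om)(\psi_0, \ldots, \psi_p)$ depends polynomially on the jets of the $\psi_i$, so that its coefficients lie in the algebra $\Fc$ of regular functions on $\FN$ --- this is guaranteed by the differentiability built into the target of $\Dc$ together with the explicit fiber-integration formula \eqref{dupont3}; that it is totally antisymmetric in its group arguments, which reflects the flatness of the standard connection and the commutativity of $\Fc$; and that it satisfies the coinvariance (equivalently, the covariance in \eqref{g*N}), which I would read off from \eqref{Dcov} by specializing $\phi$ to $\Nbo$.

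Next I would establish that $\Ec$ is a chain map. Here the work is lightened by the observation, recorded just before the statement, that on the antisymmetric subcomplex one has $B_\wg = 0$, so the target differential is merely $\p_\wg \pm b_\wg$; there is no cyclic operator to track. It then suffices to match the Gelfand--Fuks differential $d$ with $\p_\wg \pm b_\wg$ under the reduction to $\vp = e$. This is inherited from the chain-map property of $\Dc$ exactly as in Theorem \ref{main2}: the de Rham differential along $G = F\Rb^n$, restricted to left-invariant forms $\wg\Fg^\ast$, becomes the Chevalley--Eilenberg differential $\p_\wg$, while the group coboundary in the $\Nbo$-direction, processed through the simplicial integration $\circint_{\D^\bullet}$ as in Theorem \ref{difDup}, becomes the coalgebra coboundary $\bar b \equiv b_\wg$.

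It remains to prove that $\Ec$ is a quasi-isomorphism, and this is the content of the van Est theorem of \cite{CM98}, in the refined bicomplex form of \cite{MR11}. One route is to identify $\Ec$, written in the homogeneous coordinates \eqref{vars}, with the comparison map of those references and invoke their result directly. The route I expect to be cleanest, however, is to argue intrinsically: the analogue for the $\Nbo$-action of Theorems \ref{main1} and \ref{main2} shows that $\Dc$ is a quasi-isomorphism onto differentiable $\Nbo$-equivariant cohomology, and the covariance \eqref{Dcov} under the \emph{full} left $G$-action then collapses this cohomology onto the $\Fg$-direction together with the functions on $\FN$ --- precisely the bicomplex $C^{\rm tot\,\bullet}_\Fc(\wg\Fg^\ast, \wg\Fc)$ --- by means of the contractibility of $G$ and the horizontal homotopy used in the proof of Theorem \ref{main1}. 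The relative statement follows by the same scheme after restricting to $\O_n$-basic cochains on both sides: one replaces $\Fg$ by $\Fg/\Fo_n$, uses the $\GL_n$-equivariance of $\hat{\vars}$, and averages by the $\O_n$-homotopy $\int_{\O_n}(\,\cdot\,)\,dk$ already employed in part (2) of Theorem \ref{main1}.

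The hard part will be the collapse argument of the last paragraph --- showing that $G$-covariance \emph{identifies} the differentiable $\Nbo$-equivariant cohomology with the Chevalley--Eilenberg bicomplex, rather than merely mapping into it. This is where the contractibility of $G$ and the precise interplay between the left action $\rt$ and the right action $\lt$ in the decomposition $\Gb = G \cdot \Nbo$ must be used in full, and it is exactly the step that the van Est arguments of \cite{CM98, MR11} were designed to handle.
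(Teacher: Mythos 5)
First, be aware that the paper contains no proof of Theorem \ref{vE1}: it is introduced as a result ``proved in \cite{CM98} and in the form stated below in \cite{MR11}'', and nothing beyond that citation is offered. So your first route --- identifying $\Ec$, written in the homogeneous coordinates \eqref{vars}, with the comparison map of \cite{CM98, MR11} and invoking their theorem --- is not merely one option; it is exactly the paper's treatment, and it is the only complete argument in your proposal. Your preliminary verifications (that the coefficients of $\Ec(\om)$ lie in $\Fc$, antisymmetry, the coinvariance of \eqref{g*N} read off from \eqref{Dcov}, and the chain-map property using $B_\Fc=0$ on antisymmetric cochains) are consistent with the paper's setup and unobjectionable.

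The intrinsic route you declare ``cleanest'' has a genuine gap beyond the one you flag, and one of its steps fails as stated. You claim that ``the analogue for the $\Nbo$-action of Theorems \ref{main1} and \ref{main2}'' shows $\Dc$ is a quasi-isomorphism onto the differentiable $\Nbo$-equivariant complex. But the proof of Theorem \ref{main1} hinges on the horizontal homotopy $(H\a)_{p-1}(\tb;\rho_0,\dots,\rho_{p-1},j_0^\ify(\rho)) = \a_p(\tb;\rho^{-1},\rho_0,\dots,\rho_{p-1},j_0^\ify(\rho))$, which inserts the inverse of an arbitrary point $j_0^\ify(\rho)\in F^\ify\Rb^n$ into a group slot; an $\Nbo$-equivariant differentiable cochain can only be evaluated on jets of elements of $\Nbo$, and $\rho^{-1}$ generically has $\rho^{-1}(0)\neq 0$ and $(\rho^{-1})'(0)\neq\Id$, so this homotopy is simply undefined in the $\Nbo$-equivariant complex. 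Moreover, even granting such a homotopy, the argument of Theorem \ref{main1} would identify the cohomology of the target with that of $\Om^\bullet(F^\ify\Rb^n)^{\Nbo}$, which is strictly larger than $C^\bullet(\Fa_n)\cong\Om^\bullet(F^\ify\Rb^n)^{\Gb}$ (for instance, $\Nbo$ is far from transitive on $F^\ify\Rb^n$, so there are many nonconstant $\Nbo$-invariant functions); nothing in Section 1 shows that the inclusion of the smaller complex induces an isomorphism in cohomology. That statement, combined with the ``collapse'' step you yourself single out as the hard part, is precisely the content of the van Est theorem of \cite{CM98, MR11}; it does not follow formally from the $\Gb$-equivariant results, which is why the paper (and your fallback route) simply quote it.
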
 

\smallskip
 
The full version of the van Est theorem (cf.~\cite{CM98})
involves the map $\Phi$ of Connes~\cite[III.2.$\d$]{book}, so we
recall its definition specialized to our context.

Consider the DG-algebra,
$\, {\Bc_\Gb} (G)= \Om^*_c(G) \ot \wg \, \Cb [\Gb']$, where $\Gb' = \Gb \setminus \{e\}$,
with the differential $d \ot \Id$.
One labels the generators of $\Cb [\Gb']$ as
$\g_{\phi}$, $\phi \in \Gb$, with $\g_e = 0$, and one forms the crossed product
$\, \Cc_\Gb (G)  = {\Bc_\Gb}(G) \rtimes \Gb$, 
with the commutation rules
\begin{align*}
&U_{\phi}^\ast \, \om \, U_{\phi} = \phi^\ast \, \om  , &\qquad
\, \om \in \Om^*_c(G),\\
& U_{\phi_1}^\ast \, \g_{\phi_2} \, U_{\phi_1} =\g_{\phi_2 \circ \phi_1} -
\g_{\phi_1} , &\qquad  \phi_1 , \phi_2 \in \Gb \, .
\end{align*}
$\Cc_\Gb (G) $ is also a DG-algebra, equipped with the differential  
\begin{equation} \label{dbo}
{\dbo} (b \, U_{\phi}^\ast) = db \, U_{\phi}^\ast - (-1)^{\p b} \, b \, \g_{\phi} \,
U_{\phi}^\ast  , \qquad b \in {\Bc_\Gb} (G) , \quad \phi \in \Gb,
\end{equation}

A cochain $\lambda \in \bar{C}^{q}(\Gb, \Om^p(G))$ determines a linear form
$\wt{\lambda}$ on $\Cc_\Gb (G) $ as follows: 
\begin{align}   \label{prePhi}
\begin{split}
&\wt{\lambda} (b \,U_{\phi}^\ast) = 0 \qquad  \text{for} \quad  \phi \ne \one ; \\
& \text{if}  \quad \phi = \one \quad  \text{and} \quad
 b=\om \ot \g_{\rho_1} \ldots \g_{\rho_q} \qquad  \text{then} \\
&\wt{\lambda}(\om \ot \g_{\rho_1} \ldots \g_{\rho_q}) = \int_{G}
 \lambda(1, \rho_1 , \ldots ,\rho_q) \wg \om .
   \end{split}
\end{align}
The map $\Phi$ from $ \bar{C}^{\bullet}(\Gb, \Om^\bullet(G))$
to the $(b, B)$-complex of the algebra $\Ac = C_c^\ify (G)  \rtimes \Gb$ 
is now defined for  $\lambda \in \bar{C}^{q}(\Gb, \Om^p(G))$ by
\begin{align}   \label{mapPhi}
\begin{split}
\Phi(\lambda)(a^0, \ldots, a^m)&=
\frac{p!}{(m+1)!}
\sum_{j=0}^l(-1)^{j(m-j)}\wt{\lambda}({\dbo}a^{j+1}\cdots {\dbo}a^m\; a^0\; 
{\dbo}a^1\cdots {\dbo}a^j) \\
   \text{where} \quad m &=\dim G-p+q  , \qquad a^0, \ldots, a^m \in \Ac .
   \end{split}
\end{align}
By~\cite[III.2.$\d$, Thm. 14]{book}, $\Phi$ is a chain map to the total
 $(b, B)$-complex of the algebra $\Ac$.
 \smallskip

It is shown in~\cite[pp. 233-234]{CM98}), that if
$\lambda \in \bar{C}^{q}(\Gb, \Om^p(G))$ is of the form $\lambda = \Dc  (\om)$
with  $\om \in C(\Fa_n)$
then $\Phi(\lb)$ has the expression  
\begin{align}  \label{Phim}
 \Phi(\lambda)(a^0, \ldots, a^q) \, = \,
\sum_\a \tau (a^0  h_\a^1 (a^1) \ldots h_\a^q (a^q)) , \qquad h_\a^i \in \Hc_n ;
\end{align}
the tensor $\, \sum_\a  h_\a^1 \ot \ldots \ot h_\a^q \in \Hc_n^{\ot \, q}$ is
uniquely determined, because the characteristic map  \eqref{char-map} is
faithful. Via the corresponding identification, $\Phi(\lb)$ becomes a chain in the 
$(b, B)$-complex which defines the Hopf cyclic cohomology of $ \Hc_n $.
By restricting $\Phi$ to the subcomplex 
\begin{align}  \label{sups}
\bar{C}_{\Dc}^{\rm tot}(\Gb, \Om^*(G)) :=\Dc \big(C(\Fa_n)\big) 
\subset \bar{C}_{\rm d}^{\rm tot}(\Gb, \Om^*(G)) ,
\end{align}
one thus obtains a map
\begin{align}  \label{ups}
\begin{split}
 \Phi_{\rm d}: &\bar{C}_{\Dc}^{\rm tot}(\Gb, \Om^*(G)) \ra CC^{\rm tot \bullet} (\Hc_n, \Cb_\d)\\
 \Phi_{\rm d}(\lb) &= \, \sum_\a  h_\a^1 \ot \ldots \ot h_\a^q \, \in \, \Hc_n^{\ot \, q} .
 \end{split}
\end{align}
$ \Phi_{\rm d}$ is tautologically a chain map, because the Hopf cyclic structure of
$\Hc_n$ was imported from that of the cyclic complex of $\Ac$.
 \smallskip

Furthermore, by restriction to $\O_n$-basic forms on $G$ one obtains
the relative version of the above chain map, which lands in the relative version
of the above Hopf cyclic complex $\, \Hc_n^{\natural}(\O_n; \d) $. 
\smallskip

With this at hand, the van Est type result proved in~\cite[Theorem 11]{CM98} 
can be rephrased as follows.

\begin{thm} \label{vE}
 The composition $\Phi_{\rm d} \circ \Dc : C^\bullet (\Fa_n) \ra CC^{\rm tot *} (\Hc_n; \Cb_\d)$,  together with
its restriction  $ C^\bullet (\Fa_n , \O_n) \ra CC^{\rm tot *} (\Hc_n, O_n; \Cb_\d)$,
are quasi-isomorphisms.
 \end{thm}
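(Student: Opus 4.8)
The plan is to deduce the statement from the already-established quasi-isomorphism $\Ec$ of Theorem \ref{vE1}, using the fact that $\Dc(\om)$ and $\Ec(\om)$ encode the same data and that the target of $\Ec$ computes $HP^\bullet(\Hc_n; \Cb_\d)$. Indeed, by \eqref{EtoD} the value $\Dc(\om)(\psi_0,\dots,\psi_p)\mid_\vp$ is obtained from $\Ec(\om)(\psi_0\lt\vp,\dots,\psi_p\lt\vp)$ by the left translation $L_\vp^\ast$; thus the $\Gb$-covariant cochain $\Dc(\om)$ is recovered from its restriction $\Ec(\om)$ to $\vp=e$, and conversely. Since $\Ec$ is a quasi-isomorphism onto $C^{\rm tot\bullet}_\Fc(\wg\Fg^\ast,\wg\Fc)$, whose total cohomology is canonically $HP^\bullet_{\acl}(\Hc_n)\cong HP^\bullet(\Hc_n;\Cb_\d)=H^\bullet\big(CC^{\rm tot\bullet}(\Hc_n;\Cb_\d)\big)$, it suffices to show that $\Phi_{\rm d}\circ\Dc$ realizes this same quasi-isomorphism at the chain level.

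First I would make the comparison explicit. Formula \eqref{Phim} already exhibits $\Phi(\Dc(\om))$ as the characteristic-map image \eqref{char-map} of a uniquely determined tensor $\sum_\a h^1_\a\ot\cdots\ot h^q_\a\in\Hc_n^{\ot q}$, that is, $\Phi_{\rm d}(\Dc(\om))=\sum_\a h^1_\a\ot\cdots\ot h^q_\a$; uniqueness is forced by the faithfulness of \eqref{char-map}. The content of the argument is then to identify this tensor with the image of $\Ec(\om)\in C^{\rm tot\bullet}_\Fc(\wg\Fg^\ast,\wg\Fc)$ under the canonical quasi-isomorphism of \cite{MR09,MR11} relating the Chevalley--Eilenberg bicomplex to the standard cyclic complex $CC^{\rm tot\bullet}(\Hc_n;\Cb_\d)$. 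This matching is exactly the computation carried out in \cite[pp.\ 233--234]{CM98}, where $\Phi$ applied to $\Dc(\om)$ is unwound against the $\dbo$-expansion \eqref{dbo}: integration over $G$ of the top-degree component isolates the jet coordinates that, under $\etabar$ and the bicrossed-product identification of \S\ref{bcp}, become the generators $\d^i_{jk\ell_1\dots\ell_r}$ of $\Hc_n$, while the restriction to $\vp=e$ producing $\Ec(\om)$ matches the passage to the coinvariants \eqref{coinv}. Granting this identification, $\Phi_{\rm d}\circ\Dc$ equals the composite of $\Ec$ with the canonical quasi-isomorphism to $CC^{\rm tot\bullet}(\Hc_n;\Cb_\d)$, both of which induce isomorphisms on cohomology; hence $\Phi_{\rm d}\circ\Dc$ is a quasi-isomorphism.

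The relative statement follows by restricting every ingredient to $\O_n$-basic cochains. Theorem \ref{vE1} supplies the relative quasi-isomorphism $\Ec^{\O_n}:C^\bullet(\Fa_n,\O_n)\ra C^{\rm tot\bullet}_\Fc(\wg(\Fg/\Fo_n)^\ast,\wg\Fc)^{\O_n}$, the relative chain map \eqref{grp-iso-rel} supplies the corresponding identification with $CC^{\rm tot\bullet}(\Hc_n,\O_n;\Cb_\d)$, and the $\O_n$-averaging homotopy used in the proof of Theorem \ref{main1}(2) transports the comparison of the previous paragraph verbatim to the $\O_n$-basic subcomplexes.

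The main obstacle is the middle step — the chain-level identity $\Phi_{\rm d}(\Dc(\om))=(\text{canonical map})(\Ec(\om))$. The difficulty is reconciling Connes' analytically flavored $\Phi$ (integration over $G$ against the top component of $\Dc(\om)$, governed by the $\dbo$-differential \eqref{dbo}) with the purely algebraic bicrossed-product description of $\Hc_n$: one must match the $(b,B)$-combinatorics of the cyclic complex of $\Ac$ against the $(\p_\Fg,b_\Fc,B_\Fc)$-structure of the Chevalley--Eilenberg bicomplex, tracking how $\circint_{\D^\bullet}$ and $\hat{\vars}^\ast$ feed into both. Since this reconciliation is precisely the content of \cite[Theorem 11]{CM98}, refined in \cite{MR11}, the present theorem is a faithful repackaging of that result rather than a new computation, and no further estimate is needed.
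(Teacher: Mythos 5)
The paper does not actually prove Theorem \ref{vE}: it introduces it with the words ``the van Est type result proved in \cite[Theorem 11]{CM98} can be rephrased as follows,'' so the paper's own ``proof'' is a citation of prior work (in the form refined in \cite{MR11}). Your proposal, as you concede in your final paragraph, ultimately rests on the very same citation, so in substance the two treatments coincide: the theorem is imported, not derived, and to that extent your write-up is consistent with the paper.

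However, read as a genuine derivation from Theorem \ref{vE1}, your argument is circular, and the circularity sits exactly at the step you call the crux. The chain-level identity $\Phi_{\rm d}(\Dc(\om)) = \Psi(\Ec(\om))$, with $\Psi$ the canonical comparison map between $C^{\rm tot \bullet}_\Fc (\wg \Fg^\ast, \wg\Fc)$ and $CC^{\rm tot \bullet}(\Hc_n;\Cb_\d)$, is not established anywhere you point to: the paper cites \cite[pp.~233--234]{CM98} only for formula \eqref{Phim}, i.e.\ for the fact that $\Phi(\Dc(\om))$ lies in the range of the characteristic map \eqref{char-map} (which is what makes $\Phi_{\rm d}$ well defined), and the Chevalley--Eilenberg bicomplex and the map $\Ec$ simply did not exist in \cite{CM98}. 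The factorization of $\Dc$ through $\Ec$ that this paper actually proves is Lemma \ref{TED}, $\Theta\circ\Ec=\Dc$, whose target is the differentiable Bott complex, not the cyclic complex of $\Hc_n$; so your $\Psi$ would have to be $\Phi_{\rm d}\circ\Theta$, and the claim that this map is a quasi-isomorphism is equivalent to the theorem being proved --- it cannot be extracted from Theorem \ref{vE1} alone. Two smaller miscitations: \eqref{grp-iso-rel} maps into the group-cochain complex $\bar{C}_\wg^{\rm tot \bullet} (\FN,\wg (\Fg/\Fo_n)^\ast)^{\O_n}$, not into $CC^{\rm tot \bullet}(\Hc_n,\O_n;\Cb_\d)$, and the $\O_n$-averaging homotopy from Theorem \ref{main1} plays no role in the relative statement. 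None of this damages your conclusion, because the conclusion is carried entirely by the appeal to \cite[Theorem 11]{CM98} --- exactly as in the paper --- but the intermediate scaffolding should not be mistaken for an independent proof.
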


\smallskip
  
\subsection{From Hopf cyclic to differentiable cohomology} \label{HtoDC}

In~\cite[\S 3.2]{MR11}  we have constructed another map 
of bicomplexes, $\Theta: C^{\bullet}_\Fc (\wg \Fg^\ast, \wg\Fc) \ra
\bar{C}_{\rm d}^{\bullet}(\Gb, \Om^*(G))$, and we are now in a position
to prove that it too is a quasi-isomorphism.
  
In order to define it, we recall
the isomorphism $\etabar:  \Hc^{\cop}_{\rm ab} \rightarrow \Fc$ of \eqref{iota},
and denote its inverse $\, \dbar = \etabar^{-1}$. Given $f \in \Fc$, one defines the function
$\gbar(f) :\Gb \ra C^\ify (G)$ by
 \begin{align} \label{gbar}
 \dbar(S(f))(U_\phi) \, = \,  \gbar(f)(\phi) \,U^*_\phi ,  \qquad \fl \, \phi \in \Gb .
\end{align}
The left hand side uses the natural action of $\Hc_n$ on the crossed product algebra
$\td\Ac = C^\ify (G) \rtimes \Gb$. 
 The function $ \gbar(f)(\phi) \in C^\ify (G) $,
depends smoothly (in fact algebraically) on the components of the $k$-jet 
of $\phi$, for some $k \in \Nb$.
For example, one can easily see that
\begin{align} \label{bargeta1}
 \gbar(S(\eta^i_{j k}))(\phi^{-1}) \, = \, \g^i_{j k}(\phi) , \qquad \phi \in \Gb .
\end{align} 
 
With this notation, $\, \Theta: C^{\bullet}_\Fc (\wg \Fg^\ast, \wg\Fc) \ra
\bar{C}_{\rm d}^{\bullet}(\Gb, \Om^*(G))$ is given by the formula
\begin{align} \label{Theta}
\begin{split}
\Theta&\big(\sum_{|I|=q} \a_I\ot \lu{I}f^0 \wg \cdots
 \wg \lu{I}f^p \big)(\phi_0, \dots ,\phi_p)=\\
&\sum_{I} \sum_{\s \in S_{p+1}} 
(-1)^\s   \gbar(S( \lu{I}f^{\s(0)}))(\phi_0^{-1})\dots \gbar(S(\lu{I}f^{\s(p)}))(\phi_p^{-1}){{\td\a_I}} ;
\end{split}
\end{align}
here, as in \S \ref{LtoH},
$\td\a_I$ stands for the left-invariant form on $G$ corresponding to $\a_I \in \wg \Fg^*$.
\smallskip

We shall first show that the map $\Theta$ satisfies
a property completely similar 
to that described by formula \eqref{EtoD}. Given the form 
of the expression in  the right hand side of \eqref{Theta}, to justify this
it suffices to prove the following lemma.

\begin{lem} \label{step1}
For any $f \in \Fc$, $\psi \in \Nbo$ and $\vp \in G$, one has
\begin{align} \label{TtoD}
 \gbar(S(f))(\psi^{-1}) (\vp)  =  \gbar(S(f))((\psi \lt \vp)^{-1})(e) .
 \end{align} 
\end{lem}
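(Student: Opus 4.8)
\emph{Plan.} The plan is to peel off the dependence on $f$ by an algebra argument, reducing \eqref{TtoD} to a covariance property of the jet functions $\g^i_{jk\ell_1\dots\ell_r}$ on the frame bundle $G\simeq F\Rb^n$, and then to prove that property by a direct computation in the lowest jet order followed by an induction that lifts it to all orders.

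First I would reduce to the algebra generators of $\Fc$. For each fixed $\rho\in\Gb$ the assignment $f\mapsto\gbar(S(f))(\rho)$ is an algebra homomorphism $\Fc\ra C^\ify(G)$: indeed $\dbar=\etabar^{-1}$ is an algebra isomorphism onto the commutative subalgebra $\Hc_{\rm ab}$ (cf. \eqref{iota}), and by \eqref{highg} each generator $\d^i_{jk\ell_1\dots\ell_r}$ acts on $\td\Ac$ as multiplication of $f\,U^*_\rho$ by the function $\g^i_{jk\ell_1\dots\ell_r}(\rho)$, so products of generators act by the corresponding products of functions. Since both sides of \eqref{TtoD} are obtained from $f$ by such a homomorphism followed by a point evaluation (at $\vp$, resp. at $e$), it suffices to verify \eqref{TtoD} for $f$ ranging over the generators $\eta^i_{jk\ell_1\dots\ell_r}$ of $\Fc$. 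For these, \eqref{bargeta1} and its higher-jet analogues, obtained in the same way from \eqref{highg}, give $\gbar(S(\eta^i_{jk\ell_1\dots\ell_r}))(\rho)=\g^i_{jk\ell_1\dots\ell_r}(\rho^{-1})$; substituting $\rho=\psi^{-1}$, resp. $\rho=(\psi\lt\vp)^{-1}$, turns \eqref{TtoD} into the covariance identity
\begin{align*}
\g^i_{jk\ell_1\dots\ell_r}(\psi)(\vp)=\g^i_{jk\ell_1\dots\ell_r}(\psi\lt\vp)(e), \qquad \psi\in\Nbo,\ \vp\in G.
\end{align*}

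Next I would settle the base case $r=0$ directly. Writing $\vp(x)=Ax+b$, so that $\vp$ corresponds to the frame $(b,A)=(\vp(0),\vp'(0))$, and using the explicit form of $\psi\lt\vp$ read off from \eqref{Kac2}--\eqref{actions}, namely $(\psi\lt\vp)(x)=(\psi'(b)A)^{-1}\big(\psi(Ax+b)-\psi(b)\big)$, the normalization $(\psi\lt\vp)'(0)=\Id$ reduces the right-hand side to $\p_k\p_j(\psi\lt\vp)^i(0)$; a chain-rule computation then identifies it with the value of $\g^i_{jk}(\psi)$ at $(b,A)$, which is the left-hand side. I would then upgrade this to the two-variable cocycle form
\begin{align*}
\g^i_{jk}(\psi)(v\cdot w)=\g^i_{jk}(\psi\lt v)(w), \qquad v,w\in G,
\end{align*}
by combining the $r=0$ identity with the right-action property $\psi\lt(v w)=(\psi\lt v)\lt w$.

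Finally, to reach all jet orders I would argue by induction, using that $\g^i_{jk\ell_1\dots\ell_r}(\rho)=X_{\ell_r}\cdots X_{\ell_1}\big(\g^i_{jk}(\rho)\big)$ and that the horizontal fields $X_\ell$ are left-invariant on $G$. Differentiating the level-$r$ cocycle identity in the variable $w$ along $X_{\ell_{r+1}}$, and invoking left-invariance in the form $X^{(w)}_{\ell}\big[F(v\cdot w)\big]=(X_\ell F)(v\cdot w)$, propagates the cocycle identity from level $r$ to level $r+1$; setting $w=e$ then yields the covariance identity for every $r$. I expect the main obstacle to be twofold: the convention bookkeeping in the reduction (the precise higher-jet form of \eqref{bargeta1} and the action of $\dbar(S(f))$ on $U^*_\phi$ in \eqref{gbar}), and the substantive point that horizontal differentiation must intertwine with the frame shift $\vp\mapsto\psi\lt\vp$. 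It is exactly the passage to the two-variable cocycle form, rather than working with a single evaluation point, that renders this intertwining transparent through left-invariance.
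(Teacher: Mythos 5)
Your proposal is correct, and its skeleton matches the paper's: reduce to the generators $\eta^i_{jk\ell_1\dots\ell_r}$ via multiplicativity of $\gbar$, use the higher-jet form of \eqref{bargeta1} (the paper's \eqref{bargeta2}, which it proves by the commutator computation $\d^i_{jk\ell}=[X_\ell,\d^i_{jk}]$ rather than by citing \eqref{highg} directly), and thereby reduce everything to the covariance identity $\g^i_{jk\ell_1\dots\ell_r}(\psi)(\vp)=\g^i_{jk\ell_1\dots\ell_r}(\psi\lt\vp)(e)$, which is exactly the paper's \eqref{fing}. Where you genuinely diverge is in how that identity is established. The paper never computes a jet explicitly: it invokes the cocycle property $\g^i_{jk}(\phi_1\phi_2)=\g^i_{jk}(\phi_1)\circ\phi_2+\g^i_{jk}(\phi_2)$ together with the vanishing of $\g^i_{jk}$ on affine maps, applied twice --- right composition with $\vp\in G$ gives $\g^i_{jk}(\psi\vp)=\g^i_{jk}(\psi)\circ\vp$, left composition with $\rho\in G$ gives $\g^i_{jk}(\rho\phi)=\g^i_{jk}(\phi)$ --- propagates both to all jet orders by differentiating along the left-invariant $X_\ell$'s, and then splices them with the factorization $\psi\vp=(\psi\rt\vp)(\psi\lt\vp)$. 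You instead verify the $r=0$ case by a bare-hands chain-rule computation from \eqref{Kac2}, recast it as the two-variable identity $\g^i_{jk}(\psi)(v\cdot w)=\g^i_{jk}(\psi\lt v)(w)$ using $\psi\lt(vw)=(\psi\lt v)\lt w$, and induct on jet order via left-invariance of the $X_\ell$'s. Note that the paper's two identities, read together as functions on $G$, say precisely $\g^i_{jk\dots}(\psi)\circ L_\vp=\g^i_{jk\dots}(\psi\lt\vp)$, i.e.\ your two-variable form; so the two arguments prove the same intermediate statement, yours by elementary computation plus induction (self-contained, no appeal to the cocycle property as a known fact), the paper's by a structural argument that exploits affinity of $G$ on both sides of the composition and is shorter once the cocycle property is granted. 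Both rely on the same mechanism --- left-invariance of the horizontal fields --- to climb jet orders, so the trade-off is purely between self-containedness and economy.
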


\begin{proof}  Using the cocycle property of $\, \g^i_{j k}$ and the fact that
  $ \vp \in G$ is affine, one has for any $\psi \in \Nbo$,
  \begin{equation*} 
\g_{jk}^i (\psi \vp)  \, =\,  \g_{jk}^i (\psi) \circ \vp \, +
\, \g_{jk}^i (\vp)  \, =\,   \g_{jk}^i (\psi)\circ \vp .
\end{equation*}
By successive differentiation with respect to left invariant vector fields $X_k$,
one obtains
\begin{align*} 
 \g^i_{j k \ell_1 \ldots \ell_r} (\psi \vp)  \, = \,  \g^i_{j k \ell_1 \ldots \ell_r} (\psi )\circ \vp.
\end{align*}
Letting $e =(0, \one)$ be the base frame we note that, by our identification of $G$
with $F\Rb^n$, $\vp(e) \equiv \vp$. 
Thus, the above identity evaluated at $e$ gives
\begin{align} \label{highgpsi}
\g^i_{j k \ell_1 \ldots \ell_r}  (\psi \vp) (e) \, = \, 
\g^i_{j k \ell_1 \ldots \ell_r}  (\psi)({\vp}(e)) \equiv  \g^i_{j k} (\psi)(\vp) .
\end{align}

Again by the cocycle property, if $\rho \in G$ then
 \begin{equation*} 
\g_{jk}^i (\rho \phi)  \, =\,  \g_{jk}^i (\rho) \circ \phi \, +
\, \g_{jk}^i (\phi)  \, =\,   \g_{jk}^i (\phi) .
\end{equation*}
Therefore, by differentiation,
  \begin{align} \label{ltact1}
   \g^i_{j k \ell_1 \ldots \ell_r} (\rho \phi)\, = \, \g^i_{j k \ell_1 \ldots \ell_r} ( \phi) , \qquad \rho \in G, 
   \quad \phi \in \Gb.
   \end{align}
Writing now $\, \psi \vp \, = \, (\psi \rt \vp) \, (\psi \lt \vp)$, for $\vp \in G$, $\psi \in \Nbo$,
on applying \eqref{ltact1}
one obtains
\begin{align*}
 \g^i_{j k \ell_1 \ldots \ell_r} (\psi \vp)  \, = \,  \g^i_{j k \ell_1 \ldots \ell_r} (\psi \lt  \vp) ,
\end{align*}
and so by \eqref{highgpsi},
\begin{align} \label{fing}
 \g^i_{j k \ell_1 \ldots \ell_r} (\psi) (\vp)  \, = \,  \g^i_{j k \ell_1 \ldots \ell_r} (\psi \lt  \vp)(e) .
\end{align}

On the other hand, the identity \eqref{bargeta1} is valid for higher order jets.
Indeed, applying the definition \eqref{gbar} to $f = \eta^i_{j k \ell}$, one has
\begin{align*}
\begin{split}
& \gbar(S(\eta^i_{j k \ell}))(\phi^{-1})\, U^\ast_\phi = \d^i_{j k \ell} (U^\ast_\phi) = 
[X_\ell ,  \d^i_{j k}] (U^\ast_\phi) = X_\ell \big( \d^i_{j k} (U^\ast_\phi) \big) \\
&=  X_\ell \big( \g^i_{j k}(\phi) \, U^\ast_\phi \big) 
= \g^i_{j k \ell} (\phi)\, U^\ast_\phi.
\end{split}
\end{align*}
Repeated applications give the general identity
\begin{align}  \label{bargeta2}
\gbar(S(\eta^i_{j k \ell_1 \ldots \ell_r}))(\phi^{-1})\, = \, \g^i_{j k \ell_1 \ldots \ell_r} (\phi) .
\end{align}

The relations \eqref{highgpsi} and \eqref{bargeta2} taken together imply
\begin{align*}
\gbar(S(\eta^i_{j k \ell_1 \ldots \ell_r}))(\psi^{-1}) (\vp) \, = \, 
\gbar(S(\eta^i_{j k \ell_1 \ldots \ell_r}))\big((\psi \lt \vp)^{-1}\big) (e) ,
\end{align*}
which proves the statement for a set of generators 
of the algebra $\Fc$.

To complete the proof it
remains to notice that
 \begin{align*}
 \gbar (f_1 f_2) =  \gbar (f_1) \, \gbar (f_2) , \qquad f_1 f_2 \in \Fc ,
\end{align*}
and therefore both sides of the desired relation behave multiplicatively.
\end{proof}

Relying on this lemma, we can now prove a key relation
between the maps of complexes constructed before. 

\begin{lem}  \label{TED}
One has \, $\Theta \circ \Ec  \, = \, \Dc$. 
\end{lem}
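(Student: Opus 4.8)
The plan is to prove $\Theta\circ\Ec=\Dc$ by showing that the explicit cochain $\Theta(\Ec(\om))$ and the cochain $\Dc(\om)$ agree as $\wg^\bullet\Fg^\ast$-valued forms on $G$, and to do this by the same mechanism the paper uses for $\Dc$ itself: exhibit both as reconstructed from their value at the identity $\vp=e$ by one and the same recovery rule, and then check that these identity values coincide. By Theorem \ref{vE1} I may work with a homogeneous element $\Ec(\om)=\sum_{|I|=q}\a_I\ot\lu{I}f^0\wg\cdots\wg\lu{I}f^p$, so that by \eqref{Theta} the form $\Theta(\Ec(\om))(\psi_0,\dots,\psi_p)$ has coefficient functions $\prod_j\gbar(S(\lu{I}f^{\s(j)}))(\psi_j^{-1})$ paired against the left-invariant forms $\td\a_I$.

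First I would evaluate this form at an arbitrary point $\vp\in G$ and apply the preceding Lemma \ref{step1} factor by factor, replacing each $\gbar(S(\lu{I}f^{\s(j)}))(\psi_j^{-1})(\vp)$ by $\gbar(S(\lu{I}f^{\s(j)}))((\psi_j\lt\vp)^{-1})(e)$; since $\gbar$ is multiplicative this is legitimate for the products of generators constituting each $\lu{I}f^j$. Because the $\td\a_I$ are left-invariant, their values at $\vp$ are obtained from $\a_I$ by the same left-translation operation $L_\vp^\ast$ occurring in \eqref{EtoD}, and pulling $L_\vp^\ast$ out of the now purely scalar coefficients yields
\[
\Theta(\Ec(\om))(\psi_0,\dots,\psi_p)\mid_\vp \, = \, L_\vp^\ast\big(\Theta(\Ec(\om))(\psi_0\lt\vp,\dots,\psi_p\lt\vp)\mid_e\big) ,
\]
the exact analogue of \eqref{EtoD}. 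This is the step into which the entire difficulty is absorbed: once Lemma \ref{step1} supplies the $\lt$-equivariance of $\gbar$, the reconstruction rule for $\Theta(\Ec(\om))$ is forced to match that of $\Dc(\om)$.

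It then remains to compare the two sides at $\vp=e$. Here I would use the pointwise identity $\gbar(S(f))(\psi^{-1})(e)=f(\psi)$ for $\psi\in\Nbo$: on the generators $f=\eta^i_{jk\ell_1\ldots\ell_r}$ it follows by combining \eqref{bargeta2} with the definition \eqref{fcoord}, namely $\gbar(S(\eta^i_{jk\ell_1\ldots\ell_r}))(\psi^{-1})(e)=\g^i_{jk\ell_1\ldots\ell_r}(\psi)(e)=\eta^i_{jk\ell_1\ldots\ell_r}(\psi)$, and it propagates to all of $\Fc$ by multiplicativity of $\gbar$ and of evaluation. Substituting $\vp=e$ into \eqref{Theta} and invoking this identity collapses the value at $e$ to
\[
\Theta(\Ec(\om))(\psi_0,\dots,\psi_p)\mid_e \, = \,
\sum_I\Big(\sum_{\s\in S_{p+1}}(-1)^\s \, \lu{I}f^{\s(0)}(\psi_0)\cdots \lu{I}f^{\s(p)}(\psi_p)\Big)\, \a_I ,
\]
which under the identification \eqref{grp-iso-a} is exactly $\Ec(\om)$ evaluated at $(\psi_0,\dots,\psi_p)$. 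Since \eqref{E} gives $\Dc(\om)(\psi_0,\dots,\psi_p)\mid_e=\Ec(\om)(\psi_0,\dots,\psi_p)$, the two cochains have equal value at $e$.

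Finally, because $\Theta(\Ec(\om))$ obeys the displayed recovery identity and $\Dc(\om)$ obeys the identical rule \eqref{EtoD}, agreement at $\vp=e$ propagates to agreement at every $\vp\in G$; hence $\Theta(\Ec(\om))=\Dc(\om)$ as $\wg^\bullet\Fg^\ast$-valued forms on $G$ for all $\Nbo$-tuples, and by the covariance \eqref{Dcov} this equality persists after the passage to $\Gb$-cochains, proving $\Theta\circ\Ec=\Dc$. The only genuine obstacle is Lemma \ref{step1}; beyond it the argument is a matching of two explicit formulas, the sole bookkeeping being the antisymmetrization sum over $\s\in S_{p+1}$ and the left-invariance of the $\td\a_I$.
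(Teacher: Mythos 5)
Your proposal is correct and takes essentially the same route as the paper's own proof: both arguments show that $\Theta(\Ec(\om))$ and $\Dc(\om)$ obey one and the same recovery-from-the-identity rule (Lemma \ref{step1} plus left-invariance of the $\td\a_I$ on one side, \eqref{EtoD} on the other) and then match their values at $\vp=e$ via the definition \eqref{E} of $\Ec$. The only difference is that you make explicit two details the paper leaves implicit, namely the pointwise identity $\gbar(S(f))(\psi^{-1})(e)=f(\psi)$ on $\Fc$ and the collapse of $\Theta(\Ec(\om))$ at $e$ to $\Ec(\om)$ under $\kappa_\wg$.
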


\begin{proof} Let $\om \in C^\bullet (\Fa_n)$ and denote
$\varpi = \Ec (\om) \in C^{\bullet}_\Fc (\wg \Fg^\ast, \wg\Fc) $.
By the very definition \eqref{E},
 \begin{align*}
\Dc (\om) (\psi_0,\dots, \psi_p) (e) \, = \, \varpi (\psi_0,\dots, \psi_p) (e)  \in \wg^\bullet \Fg^*,
\end{align*}
while by \eqref{EtoD} on the one hand and
Lemma \ref{step1} on the other, for any $\vp \in G$ one has
 \begin{align*}
\Dc (\om) (\psi_0,\dots, \psi_p) (\vp) = L_\vp^*\big(\varpi (\psi_0,\dots, \psi_p) (e)\big)
= \Theta (\varpi)   (\psi_0,\dots, \psi_p) (\vp) .
\end{align*} 
  \end{proof} 
  
 The next important step is to reconcile the two maps denoted by $\Dc$, that defined
in Theorem \ref{main2} and the one defined in \S~\ref{LtoH}.
 
 \begin{lem} \label{DD}
 With $\nb$ denoting the standard flat linear connection on $\Rb^n$, one has
 \, $\Dc_\nb   \, = \,  \Dc$.   
\end{lem}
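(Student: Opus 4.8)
The plan is to reduce the asserted equality to a comparison of the two section maps into $F^\ify\Rb^n$ out of which the maps are built, and then to propagate a vertex-by-vertex identity by covariance. By Theorem \ref{main2} one has $\Dc_\nb = \circint_{\D^\bullet}\circ\,\hat\s^*$, where $\hat\s$ is the map \eqref{xchng2} attached to the flat connection, while by its definition in \S\ref{LtoH} one has $\Dc = \circint_{\D^\bullet}\circ\,\hat\vars^*$ with $\hat\vars$ given by \eqref{vars}; throughout I restrict $\hat\s$ to simplices whose vertices lie in $\Nbo\subset\Gb$. Thus it suffices to compare $\hat\s^*\tilde\om$ and $\hat\vars^*\tilde\om$, for $\om\in C^\bullet(\Fa_n)$, after integration along the fibers.

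First I would settle the single-connection case, i.e.\ a vertex of the simplex. For the flat connection the geodesics are straight lines, so $\exp_x^{\nb}$ is a translation and \eqref{jcon} shows that $\s_\nb$ carries a frame to its affine $\infty$-jet, which under $F\Rb^n\cong G$ is the tautological inclusion. Feeding this into the $\Diff$-equivariance \eqref{nat} and using the Kac decomposition \eqref{actions}, $\psi\circ\vp = (\psi\rt\vp)\circ(\psi\lt\vp)$, in which $\rt$ is exactly the prolongation action \eqref{frameact}, I obtain for all $\psi\in\Nbo$, $\vp\in G$,
\[
\s_{\nb^{\psi}}(\vp)\,=\,\psi^{-1}\cdot\s_\nb(\psi\cdot\vp)\,=\,\vp\cdot(\psi\lt\vp)^{-1},
\]
which is precisely the value of $\vars_p$ at that vertex. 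Moreover the two sections agree through the second jet order, and since the universal connection and curvature are of first, resp.\ second, order, Lemma \ref{Luc} and the simplicial formula \eqref{scone} give $\hat\vars^*\wt\vartheta = \hat\om_\nb = \hat\s^*\wt\vartheta$ and likewise $\hat\vars^*\wt R = \hat\Om_\nb = \hat\s^*\wt R$; hence $\hat\s^*$ and $\hat\vars^*$ coincide on the subalgebra $CW^\bullet(\Fa_n)\equiv\hat W(\Fg\Fl_n)$.

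The remaining, and genuinely delicate, point is to upgrade this to an equality on all of $C^\bullet(\Fa_n)$. Over the interior of a simplex the sections differ: $\hat\s$ interpolates the connections $\nb(\tb)=\sum_i t_i\,\nb^{\psi_i}$ and then exponentiates, whereas $\hat\vars$ interpolates the jets $\sb_{(\psi_0,\dots,\psi_p)}(\tb)=\sum_i t_i\,j_0^\ify(\psi_i)$, and since the geodesic exponential is nonlinear in the connection the two disagree at third and higher jet order. To neutralise this I would first verify, just as for $\Dc$ in \eqref{Dcov}, that $\Dc_\nb$ satisfies the enhanced covariance
\[
\Dc_\nb(\om)(\psi_0\lt\phi,\dots,\psi_p\lt\phi)\,=\,\phi^{*}\big(\Dc_\nb(\om)(\psi_0,\dots,\psi_p)\big),\qquad\phi\in G,
\]
using the cocycle identities for the $\g^i_{jk\ell_1\dots\ell_r}$ from Lemma \ref{step1} together with \eqref{nat}. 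By \eqref{EtoD} both $\Dc_\nb(\om)$ and $\Dc(\om)$ are then determined by their restrictions to $\vp=e$, so the claim reduces to $\circint_{\D^\bullet}(\hat\s^*\tilde\om)\big|_{\vp=e}=\Ec(\om)$. The crux is to show that the higher-jet discrepancy between $\hat\s$ and $\hat\vars$ is annihilated by $\circint_{\D^\bullet}$ --- a manifestation of the fact that the Gelfand--Fuks classes admit representatives of jet order at most two; I would establish this either through the fiberwise homotopy between the two sections, which is constant at the vertices and runs along the contractible fibers of $\pi_1\colon F^\ify\Rb^n\to F\Rb^n$, or by a direct check that the difference carries no component of top simplicial degree. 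Once the identity at $\vp=e$ is in hand, the covariance propagates it to every $\vp$, yielding $\Dc_\nb=\Dc$.
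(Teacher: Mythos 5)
Your first half is exactly the paper's proof: the computation that the flat exponential turns a frame $\rho\in G$ into the affine map $\rho$ itself, combined with the naturality \eqref{nat} and the factorization $\psi\circ\rho=(\psi\rt\rho)\circ(\psi\lt\rho)$, gives the vertex identity $\s_{\nb^{\psi}}(\rho)=\rho\cdot j_0^{\ify}(\psi\lt\rho)^{-1}$, and the reduction $\s_{\nb^{\vp\psi}}=\s_{\nb^{\psi}}$ for $\vp\in G$ handles general elements of $\Gb$; this is precisely the argument in the paper. The two accounts separate immediately afterwards. The paper concludes from the vertex identity that $\s_p(\tb;\psi_0,\ldots,\psi_p,\rho)=\vars_p(\tb;\psi_0,\ldots,\psi_p,\rho)$ for \emph{every} $\tb\in\D^p$ (this is \eqref{phi2psi}), so that $\hat\s^{*}=\hat\vars^{*}$ identically and the lemma follows with nothing left to repair. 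You, on the contrary, assert that the two sections disagree at third jet order over the interior of the simplex, and you propose to dispose of the resulting discrepancy only after integration along $\D^{\bullet}$.

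That disposal is a genuine gap, and it is the entire content of the lemma outside of $CW^{\bullet}(\Fa_n)$. Neither of your two suggested routes can close it. A fiberwise homotopy between $\hat\s$ and $\hat\vars$ along the fibers of $\pi_1$ shows at best that $\Dc_\nb$ and $\Dc$ are chain homotopic, whereas the lemma asserts an equality of cochain maps, and it is the strict identity that the paper uses later: \eqref{phi2psi} is invoked in the proof of Theorem \ref{VVOviaT} to restrict the simplicial construction to $\Nbo$, and Lemma \ref{TED} is combined with Lemma \ref{DD} as an equality of maps, not of cohomology classes. As for annihilation by $\circint_{\D^{\bullet}}$: already for $p=1$ and a degree-one cochain $\om$, the two candidate values are integrals of $\tilde\om$ along two paths in a fiber of $F^{\ify}\Rb^n\to F\Rb^n$ with common endpoints, so their difference equals $\int_{D}d\tilde\om$ over a disk bounded by the two paths. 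Since the lemma concerns arbitrary, in particular non-closed, cochains, nothing forces this to vanish; a direct check in dimension one, taking $\om$ dual to the formal field $x^{4}\p_x$ and $\psi(x)=x+ax^{2}$, produces a nonzero difference under your reading of the two interpolations. Your appeal to order-two representability of Gelfand--Fuks cohomology conflates classes with cochains and cannot help. What your order-two agreement argument does prove is the lemma restricted to $CW^{\bullet}(\Fa_n)\equiv\hat{W}(\Fg\Fl_n)$ --- enough for the characteristic cocycles $C_{I,J}(\nb)$, but not the statement. To prove the statement you must establish the pointwise identity \eqref{phi2psi} itself, i.e.\ reconcile the interpolation of \eqref{xchng2} (affine combination of connections inserted into \eqref{jcon}) with that of \eqref{vars} (affine combination of jets) over the whole simplex, not merely at its vertices; your premise that these genuinely differ amounts to a challenge to the paper's one-line deduction, and raising that challenge is not a substitute for resolving it.
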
  
  
 \begin{proof}  
 The construction of the two maps starts with two different
 cross-sections, $\s_\nb : F\Rb^n \ra F^\ify\Rb^n $ defined by \eqref{jcon} and
 $\vars : F\Rb^n \ra F^\ify\Rb^n$ of \eqref{vars}. We need to show that
  they both lead to the same map from $\D_\Gb F\Rb^n$ to $F^\ify\Rb^n $.
 
 Let $u \in F\Rb^n$ be represented as $j_0^1 (\vp) \equiv \rho$,
 with $\rho \in G$. We claim that for any $\psi \in N$,
\begin{align} 
\s_{\nabla^\psi} (u) = \rho \cdot j_0^{\ify}(\psi \lt \rho)^{-1} 
\end{align} 
Indeed with the usual identification $G \cong F\Rb^n$, 
\begin{align*} 
\exp_{\rho(0)}^{\nabla}(u(\xi)) = \exp_{\rho(0)}^{\nabla}(\rho'(0)\xi) =
\rho(0) + \rho'(0)\xi = \rho (\xi) , \qquad \xi \in \Rb^n.
\end{align*} 
Since the left action of $\Nbo$ on $G$ coincides with the natural action on $F\Rb^n$,
$\psi (u)\equiv \psi \rt \rho$. Thus, using
 the naturality property \eqref{nat} one obtains
 \begin{align*}
&  \s_{\nabla^\psi} (u) =  j_0^{\ify} \left(\exp_{\rho(0)}^{\nabla^\psi} \circ u \right) 
=  j_0^{\ify} \left(\psi^{-1} \circ \exp_{\psi(\rho(0))}^{\nabla} \circ \psi (u) \right)  \\
& =j_0^{\ify} \left(\psi^{-1} \circ (\psi \rt \rho) \right) 
=  j_0^{\ify} \left(\psi^{-1} \circ (\psi \rt \rho) \circ (\psi \lt \rho)\circ (\psi \lt \rho)^{-1} \right) \\
&=  j_0^{\ify} \left(\psi^{-1} \circ \psi \circ \rho \circ (\psi \lt \rho)^{-1} \right) 
= \rho \cdot j_0^{\ify}(\psi \lt \rho)^{-1}.
\end{align*}
 This shows that
 \begin{align*}
 \s_p (\tb ; \psi_0 , \ldots , \psi_p, \rho)\,  = \, \vars_p (\tb ; \psi_0 , \ldots , \psi_p, \rho) .
 \end{align*}
 
Now let $\phi \in \Gb$, factorized as the product $\vp \circ \psi$, with $\vp \in G$ and $\psi \in N$.
Then \, $ \s_{\nabla^\phi} \, = \, \s_{\nabla^{\vp \psi}} \, = \, \s_{\nabla^\psi}$ ,
 because   $ \nabla^\vp  =  \nabla$.
 
Therefore, if $\phi_i = \vp_i \psi_i$ , with $\vp_i \in G$ and
 $\psi_i \in \Nbo$, then  for any $\rho \in G$,
 \begin{align} \label{phi2psi}
 \s_p (\tb ; \phi_0 , \ldots , \phi_p, \rho)\,  = \, \vars_p (\tb ; \psi_0 , \ldots , \psi_p, \rho) ,
\end{align}
which completes the proof. 
   \end{proof} 
   
Lemmas \ref{TED} and \ref{DD} taken together ensure that
 \, $\Theta \circ \Ec  \, = \, \Dc_\nb$. Since both $\Dc_\nb$ and $ \Ec $ 
 are quasi-isomorphisms, so must be $\Theta$. This achieves the proof
 of the second explicit analogue of the van Est isomorphism:

\begin{thm} \label{vE2}
  The map  $\Theta: C^{\rm tot \bullet}_\Fc (\wg \Fg^\ast, \wg\Fc) \ra
\bar{C}_{\rm d}^{\rm tot \bullet}(\Gb, \Om^*(G))$ and
the induced map $\Theta^{\O_n}: C^{\rm tot \bullet}_\Fc (\wg (\Fg/\Fo_n)^\ast, \wg\Fc)^{\O_n} \ra
\bar{C}_{\rm d}^{\rm tot \bullet}(\Gb, \Om^*(G/\O_n))$ are quasi-isomorphisms.
 \end{thm}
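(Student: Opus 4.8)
The plan is to obtain both statements from the two-out-of-three property of quasi-isomorphisms, applied to the factorization $\Theta \circ \Ec = \Dc_\nb$. This factorization is precisely what the two preceding lemmas deliver: Lemma \ref{TED} gives $\Theta \circ \Ec = \Dc$, and Lemma \ref{DD} identifies $\Dc$ with the map $\Dc_\nb$ of Theorem \ref{main2} associated to the standard flat connection $\nb$ on $\Rb^n$. Taking $M = \Rb^n$, so that $FM = F\Rb^n \cong G$, both $\Theta \circ \Ec$ and $\Dc_\nb$ become chain maps $C^\bullet(\Fa_n) \to \bar{C}_{\rm d}^{\rm tot \bullet}(\Gb, \Om^*(G))$, and the two lemmas assert that they coincide.

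First I would pass to cohomology. Since all three arrows are chain maps, they induce $H(\Theta) \circ H(\Ec) = H(\Dc_\nb)$. By Theorem \ref{vE1}, $H(\Ec)$ is an isomorphism, and by Theorem \ref{main2}, specialized to $\Rb^n$ with its flat connection, $H(\Dc_\nb)$ is an isomorphism. Therefore $H(\Theta) = H(\Dc_\nb) \circ H(\Ec)^{-1}$ is a composite of isomorphisms, and $\Theta$ is a quasi-isomorphism.

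For the relative statement I would restrict the identical factorization to the subcomplexes of $\O_n$-basic cochains. Each of the three maps is $\GL_n$-equivariant---for $\Cc_\nabla$, and hence for $\Dc_\nb$, by Theorem \ref{main1}(2); for $\Ec$ by Theorem \ref{vE1}; and for $\Theta$ by its construction from the left-invariant forms $\td\a_I$---so all three descend to $\O_n$-invariants and the identity restricts to $\Theta^{\O_n} \circ \Ec^{\O_n} = \Dc^{\O_n}_\nabla$. The relative halves of Theorems \ref{vE1} and \ref{main2} supply that $\Ec^{\O_n}$ and $\Dc^{\O_n}_\nabla$ are quasi-isomorphisms, and the two-out-of-three argument then finishes as before.

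The analytic substance having been discharged by Lemmas \ref{TED} and \ref{DD}, I expect no genuine obstacle here; the sole point requiring care is that the relevant source and target complexes match exactly. On the target side this uses the identification of $\O_n$-basic forms on $G$ with forms on $PM = G/\O_n$ adopted in Theorem \ref{main1}(2), and on the source side the replacement of $\Fg$ by $\Fg/\Fo_n$ followed by restriction to $\O_n$-invariants set up in \S\ref{bcp}. As these conventions are already fixed, the verification amounts to routine bookkeeping rather than a genuine difficulty.
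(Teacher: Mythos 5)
Your proposal is correct and follows essentially the same route as the paper: the paper likewise combines Lemmas \ref{TED} and \ref{DD} to obtain the factorization $\Theta \circ \Ec = \Dc_\nb$, and then invokes the fact that $\Ec$ (Theorem \ref{vE1}) and $\Dc_\nb$ (Theorem \ref{main2} with $M = \Rb^n$ and the flat connection) are quasi-isomorphisms, with the relative case handled by restriction to $\O_n$-basic cochains exactly as you describe.
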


 \subsection{Hopf cyclic Vey bases} \label{TCC}

As above, $\nb$ stands for the flat connection on
the frame bundle $G \equiv F\Rb^n \ra {\Rb}^n$, 
 with connection form $\om_\nb = \left(\om^i_j \right)$, where
 \begin{equation*} 
 {\om}^i_j \, :=  \, ({\bf y}^{-1})^i_{\mu} \, d{\bf y}^{\mu}_j \, = \, \big({\bf y}^{-1} \, d{\bf y}\big)^i_j
 \, , \qquad i, j =1, \ldots , n \, .
 \end{equation*}
Its  pull-back under the action \eqref{frameact} of $\phi \in \Gb$, is 
the connection 
\begin{align} \label{phicon} 
\phi^* ({\om}^i_j ) \, = \, {\om}^i_j \, +\, \g_{jk}^i (\phi)  \,{\t}^k  .
\end{align}
Thus, the simplicial connection is
\begin{align} \label{scon} 
 \hat{\om}_\nb (\tb ; \phi_0, \ldots , \phi_p)^i_j =  \sum_{r=0}^p t_r \phi_r^* (\om^i_j) 
 = {\om}^i_j + \sum_{r=0}^p t_r  \g_{jk}^i (\phi_r)  \,{\t}^k  
\end{align}
and, taking into account that  \, $ \Om_\nb= 0$, 
the formula  \eqref{scurv} for the simplicial curvature becomes
\begin{align} \label{scurv2}
 \hat{\Om}_\nb(\tb ; \phi_0, \ldots , \phi_p) \, =& \sum_{r=0}^p dt_r \wdg \phi_r^* (\om_\nb)  
- \sum_{r=0}^p t_r \,
\phi_r^* (\om_\nb) \wdg \phi_r^* (\om_\nb) \\ \notag
& + \sum_{r, s=0}^p t_r t_s \,  \phi_r^* (\om_\nb) \wdg  \phi_s^* (\om_\nb) .
 \end{align}

Both $\hat{\om}_\nb$ and $ \hat{\Om}_\nb $ are polynomial forms on $\D_p$
tensored by left $G$-invariant forms $\{ \om^i_j  , \t^k \}$ multiplied
by components of the first-order jet of the prolongation,
 \begin{equation} \label{Gijk}
 \g^i_{j \, k} (\phi) (x, {\bf y}) \, =\, \left( {\bf y}^{-1} \cdot
{\phi}^{\prime} (x)^{-1} \cdot \part_{\mu} {\phi}^{\prime} (x) \cdot
{\bf y}\right)^i_j \, {\bf y}^{\mu}_k \, .
\end{equation}
\smallskip

Recall now that in  \S \ref{CharCo} (see Corollaries \ref{Vbasis} and \ref{VObasis})
we have constructed characteristic cocycles $C_{I, J} (\nb)$ in the differentiable 
Bott bicomplex, and also that Theorem \ref{vE2} provides a quasi-isomorphism
$\Theta: C^{\rm tot \bullet}_\Fc (\wg \Fg^\ast, \wg\Fc) \ra
\bar{C}_{\rm d}^{\rm tot \bullet}(\Gb, \Om^*(G))$.

 \begin{thm} \label{VVOviaT}
The characteristic cocycles $C_{I, J} (\nb)$,
where $(I, J)$ are running over the set $\Vc_n$, resp. $\Vc O_n$, are of the form
\begin{align} \label{sourceCo} 
C_{I, J} (\nb) \, = \, \Theta(\k_{I, J}) \, ;
\end{align}
$\k_{I, J}$ are explicitly defined
cocycles in the bicomplex $C^{\rm tot \bullet}_\Fc (\wg \Fg^\ast, \wg\Fc)$,
and their cohomology classes form a basis of 
$HP_{\acl}^\bullet(\Hc_n)$, resp. $HP_{\acl}^\bullet(\Hc_n, \O_n)$. 
 \end{thm}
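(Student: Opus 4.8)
The plan is to manufacture the cocycles $\k_{I,J}$ by pulling the differentiable Chern--Weil cocycles back through the chain of explicit quasi-isomorphisms already assembled, and then to read off the basis property from the Gelfand--Fuks theorem. Concretely, inside $CW^\bullet(\Fa_n) \equiv \hat W(\Fg\Fl_n) \subset C^\bullet(\Fa_n)$ let $w_{I,J}$ denote the universal Vey element
\[
w_{I,J} \,=\, Tc_{i_1}(\vartheta) \wg \cdots \wg Tc_{i_p}(\vartheta) \wg c_{j_1}(R)\wg \cdots \wg c_{j_q}(R),
\]
built from the universal transgression and Chern forms of the pair $(\vartheta, R)$, with $(I,J)$ ranging over $\Vc_n$ (and over $\Vc O_n$, using the $\O_n$-basic transgression \eqref{transodd}, in the relative case). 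I would then simply set
\[
\k_{I,J} \,:=\, \Ec(w_{I,J}) \in C^{\rm tot \bullet}_\Fc(\wg \Fg^\ast, \wg\Fc),
\]
respectively $\k_{I,J} := \Ec^{\O_n}(w_{I,J})$ in the relative case. Because each $w_{I,J}$ is closed and $\Ec$ is a chain map (Theorem \ref{vE1}), the $\k_{I,J}$ are cocycles; they are explicit because $\Ec$ is given by the Dupont-type integral \eqref{dupont3}.

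The first thing to verify is the identity \eqref{sourceCo}. By Corollary \ref{UCW1}, $\Cc_\nb$ is a DG-algebra isomorphism of $CW^\bullet(\Fa_n)$ onto $CW_{\rm d}^\bullet(|\bar\triangle_{\Gb} FM|)$ carrying the universal pair $(\vartheta, R)$ to the simplicial pair $(\hat\om_\nb, \hat\Om_\nb)$. Since the transgression formulas \eqref{TP} and \eqref{transodd} are universal algebraic expressions in connection and curvature, and $\Cc_\nb$ is an algebra map commuting with $d$, it carries each universal transgression to the corresponding differentiable one on the nose; hence
\[
\Cc_\nb(w_{I,J}) \,=\, Tc_I(\hat\om_\nb)\wg c_J(\hat\Om_\nb).
\]
Applying integration along the fibers and invoking the definition \eqref{basis} (resp. \eqref{basisO}) together with $\Dc_\nb = \circint_{\D^\bullet}\Cc_\nb$ from Theorem \ref{main2}, I obtain $C_{I,J}(\nb) = \Dc_\nb(w_{I,J})$. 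Finally, the identity $\Dc_\nb = \Theta \circ \Ec$ established by Lemmas \ref{TED} and \ref{DD} gives
\[
C_{I,J}(\nb) \,=\, \Theta\big(\Ec(w_{I,J})\big) \,=\, \Theta(\k_{I,J}),
\]
which is \eqref{sourceCo}; the relative case is identical with $\Cc^{\O_n}_\nb$, $\Ec^{\O_n}$, $\Theta^{\O_n}$.

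It remains to show the classes $[\k_{I,J}]$ form a basis. By the Gelfand--Fuks theorem the $w_{I,J}$, $(I,J) \in \Vc_n$, represent a Vey basis of $H^\bullet(\Fa_n)$ (and the $\O_n$-basic ones a basis of $H^\bullet(\Fa_n, \O_n)$), which is precisely the input underlying Corollaries \ref{Vbasis} and \ref{VObasis}. Since $\Ec$ (resp. $\Ec^{\O_n}$) is a quasi-isomorphism by Theorem \ref{vE1}, the classes $[\k_{I,J}]$ form a basis of the total cohomology of $C^{\rm tot \bullet}_\Fc(\wg \Fg^\ast, \wg\Fc)$ (resp. of its $\O_n$-basic subcomplex), and this cohomology is canonically $HP_{\acl}^\bullet(\Hc_n)$ (resp. $HP_{\acl}^\bullet(\Hc_n, \O_n)$) by the identification recorded in \S\ref{bcp}. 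I expect the only genuinely delicate point to be the cochain-level (not merely cohomological) matching $\Cc_\nb(w_{I,J}) = Tc_I(\hat\om_\nb)\wg c_J(\hat\Om_\nb)$: one must check that $\Cc_\nb$ intertwines the two transgressions exactly, which in the relative case also requires tracking the $\GL_n$- and $\O_n$-equivariance so that the modified odd transgression \eqref{transodd} and the passage to $\O_n$-basic elements are respected on both sides.
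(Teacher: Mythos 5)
Your proposal is correct, and it produces the same cocycles as the paper, but by a differently organized argument. The paper's proof runs from the target back to the source: it observes that $C_{I,J}(\nb)$ is a homogeneous, totally antisymmetric cochain whose values are invariant forms on $G$ with coefficients polynomial in the $\g^i_{jk}(\phi_r)$, uses \eqref{phi2psi} to restrict the group variables to $\Nbo$, and then reads the formula \eqref{Theta} backwards via \eqref{bargeta1}: the preimage $\k_{I,J}$ is exhibited by the literal substitutions $\g^i_{jk}\mapsto\eta^i_{jk}$ and $\td\a_I\mapsto\a_I$, and the basis statement follows from Theorem \ref{vE2} (that $\Theta$ is a quasi-isomorphism) combined with Corollaries \ref{Vbasis} and \ref{VObasis}. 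You instead go from the source forward: you define $\k_{I,J}:=\Ec(w_{I,J})$ for the universal Vey cocycles $w_{I,J}$, deduce \eqref{sourceCo} from the factorization $\Theta\circ\Ec=\Dc=\Dc_\nb$ (Lemmas \ref{TED} and \ref{DD}) together with the cochain-level identity $\Cc_\nb(w_{I,J})=Tc_I(\hat\om_\nb)\wg c_J(\hat\Om_\nb)$ and Theorem \ref{main2}, and obtain the basis statement from Theorem \ref{vE1} plus the Vey basis theorem for $H^\bullet(\Fa_n)$ rather than from Theorem \ref{vE2} and the basis in differentiable cohomology. The two constructions in fact yield the same cochains: by \eqref{fcoord}, evaluation at $e$ turns $\g^i_{jk}(\psi_r)$ into $\eta^i_{jk}(\psi_r)$, so $\Ec(w_{I,J})$ is precisely the paper's substitution cocycle. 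As for what each route buys: the paper's description displays $\k_{I,J}$ immediately as expressions in the generators $\eta^i_{jk}$ tensored with $\wg\Fg^\ast$, which is what makes the two-jet economy of Remark \ref{Rem:VVOviaT} transparent; your derivation is more functorial, makes the verification of \eqref{sourceCo} essentially automatic, and correctly isolates the one point needing care, namely that $\Cc_\nb$, being a $\GL_n$-equivariant DGA map carrying $(\wt{\vartheta},\wt{R})$ to $(\hat\om_\nb,\hat\Om_\nb)$ (Lemma \ref{Luc} applied to the connections $\nb(\tb;\rho_0,\ldots,\rho_p)$, as packaged in Corollary \ref{UCW1}), commutes with the universal transgressions \eqref{TP} and \eqref{transodd}; this does hold, also in the relative case, since the projections $\Fs$ and $\Fo$ act entry-wise on the matrix-valued forms and are therefore intertwined by $\Cc_\nb$.
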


\begin{proof}
The cocycles $C_{I, J} (\nb)$ are homogeneous and totally antisymmetric form-valued group cochains in the differentiable Bott bicomplex. Moreover, their values
are combinations of invariant forms
on $G \equiv F\Rb^n$ with coefficients polynomial expressions in $ \g^i_{j \, k} (\phi_r)$'s.
Also, in view of the equality \eqref{phi2psi}, one can restrict the simplicial 
construction to the group $\Nbo$, and thus assume $\phi_r \in \Nbo$.
 
 It then follows from the very definition of the map
\eqref{Theta} together with the identity \eqref{bargeta1} that 
 the preimage of these cochains in the bicomplex 
$ C^{\rm tot \bullet}_\Fc (\wg \Fg^\ast, \wg\Fc)$ is obtained by simply
replacing the $\g^i_{j k}$'s with $\eta^i_{j k}$'s
and the $G$-invariant forms $\td\a_I \in \Om^\bullet (G)$
by their values at the identity, $\a_I \in \wg^\bullet \Fg^*$. 
  
In view of Theorem \ref{vE2}, one obtains this way
a basis of Hopf cyclic characteristic classes.
\end{proof}

\begin{rem}  \label{Rem:VVOviaT}
{\rm From the formulas \eqref{phicon}, \eqref{scon}, \eqref{scurv2} and
 the very definition of the cocycles $\k_{I, J}$, it is clear that their tensor components
are ``economically'' manufactured solely out of elements from
$\wg \Fg^\ast $ tensored by exterior powers of the algebra generated
by $\{ \eta^i_{j k} ; \, 1 \leq i, j, k \leq n\}$. 
This feature constitutes
the analogue of the well-known fact that the Gelfand-Fuks cohomology classes are
representable in terms of $2$-jets.}
\end{rem}
   
\smallskip
 
Returning now to the standard Hopf cyclic cohomological model (see \S \ref{Canmod}),
we recall that the cocycles $C_{I, J} (\nb)$ were obtained by 
transferring Vey bases of $H^* (\Fa_n)$, resp. $H^* (\Fa_n , \O_n)$, via the quasi-isomorphism
$\Dc_\nabla$. Thus, by construction they belong to the subcomplex $\bar{C}_{\Dc}^{\rm tot}(\Gb, \Om^*(G))$
and therefore can be further transported via the quasi-isomorphism \,
 $\Phi_{\rm d}:\bar{C}_{\Dc}^{\rm tot}(\Gb, \Om^*(G)) \ra CC^{\rm tot *} (\Hc_n; \Cb_\d)$
 of \eqref{ups}. Since 
by Theorem \ref{vE}  the composition $\Phi_{\rm d} \circ \Dc_\nabla$ is a quasi-isomorphism,
we conclude that:

 \begin{thm} \label{VVOviaU}
{\rm (1)} The cocycles  $c_{I, J} (\nb) =\Phi_{\rm d}(C_{I, J} (\nb)) $, with $ (I, J) \in \Vc_n$,
form a complete set of representatives for the periodic Hopf cyclic
 cohomology $HP^\bullet (\Hc_n ; \Cb_\d)$.
 
 {\rm (2)} \, The cocycles  $c_{I, J} (\nb)=\Phi^{\O_n}_{\rm d}(C_{I, J} (\nb)) $, with $ (I, J) \in \Vc O_n$,
form a complete set of representatives for the relative periodic Hopf cyclic
 cohomology $HP^\bullet (\Hc_n, \O_n ; \Cb_\d)$. 
 \end{thm}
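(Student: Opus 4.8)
The plan is to recognize the $c_{I,J}(\nb)$ as the images of a Vey basis of $H^\bullet(\Fa_n)$ (resp. $H^\bullet(\Fa_n, \O_n)$) under a single quasi-isomorphism, namely $\Phi_{\rm d} \circ \Dc_\nb$, and then to invoke the elementary fact that a quasi-isomorphism carries a basis of cohomology to a basis of cohomology. First I would unwind the Chern--Weil construction of Corollaries \ref{Vbasis} and \ref{VObasis}: by Corollary \ref{UCW1}, $\Cc_\nb$ restricts to an isomorphism of the Chern--Weil subalgebra $CW^\bullet(\Fa_n) \equiv \hat W(\Fg\Fl_n)$ onto its differentiable simplicial counterpart, carrying the Gelfand--Fuks representatives $w_{I,J}$ of the Vey basis of \cite{Godbillon} precisely onto the forms $Tc_I(\hat\om_\nb) \wg c_J(\hat\Om_\nb)$. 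Applying $\circint_{\D^\bullet}$ then yields $C_{I,J}(\nb) = \Dc_\nb(w_{I,J})$ on the nose, identifying the characteristic cocycles as honest $\Dc_\nb$-images.

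The next point to secure is that the $C_{I,J}(\nb)$ lie in the subcomplex $\bar{C}_{\Dc}^{\rm tot}(\Gb, \Om^*(G)) = \Dc(C(\Fa_n))$ on which $\Phi_{\rm d}$ is defined. This is where Lemma \ref{DD} becomes essential: the identification $\Dc_\nb = \Dc$ shows that the image of the Chern--Weil cocycles under $\Dc_\nb$ coincides with the image of $\Dc$, so that $C_{I,J}(\nb) = \Dc(w_{I,J}) \in \bar{C}_{\Dc}^{\rm tot}(\Gb, \Om^*(G))$ by construction, and $\Phi_{\rm d}(C_{I,J}(\nb))$ is legitimately defined. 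For the relative statement the same reasoning applies after restriction to $\O_n$-basic forms on $G$, with $\Phi^{\O_n}_{\rm d}$ in place of $\Phi_{\rm d}$ and $H^\bullet(\Fa_n, \O_n)$ in place of $H^\bullet(\Fa_n)$.

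Finally I would assemble these into the composite quasi-isomorphism. By Theorem \ref{vE}, together with Lemma \ref{DD}, the maps $\Phi_{\rm d} \circ \Dc_\nb : C^\bullet(\Fa_n) \ra CC^{\rm tot *}(\Hc_n; \Cb_\d)$ and its relative restriction $C^\bullet(\Fa_n, \O_n) \ra CC^{\rm tot *}(\Hc_n, \O_n; \Cb_\d)$ are quasi-isomorphisms. Hence $[c_{I,J}(\nb)] = [(\Phi_{\rm d} \circ \Dc_\nb)(w_{I,J})]$ is the image of the Vey basis under an isomorphism in cohomology, so it is itself a basis, which gives (1); assertion (2) follows identically in the relative setting via $\Phi^{\O_n}_{\rm d}$. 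I expect the main obstacle to lie not in this concluding assembly, which is purely formal once the machinery is in place, but in the bookkeeping of the first two steps: one must verify that the $C_{I,J}(\nb)$ are \emph{literally} the $\Dc_\nb$-images of genuine $\Fa_n$-cocycles, hence land in the domain of $\Phi_{\rm d}$, rather than merely being cohomologous to such images. This on-the-nose identification is exactly what Corollary \ref{UCW1} and Lemma \ref{DD} are engineered to deliver, so the risk is one of careful tracking rather than of genuine mathematical difficulty.
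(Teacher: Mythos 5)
Your proposal is correct and follows essentially the same route as the paper: the paper likewise observes that the $C_{I,J}(\nb)$ are, by construction, $\Dc_\nb$-images of Vey basis representatives (hence lie in $\bar{C}_{\Dc}^{\rm tot}(\Gb, \Om^*(G))$ via Lemma \ref{DD}), and then invokes Theorem \ref{vE} to conclude that the composite quasi-isomorphism $\Phi_{\rm d} \circ \Dc_\nb$ carries the Vey bases to bases of $HP^\bullet (\Hc_n ; \Cb_\d)$, resp. $HP^\bullet (\Hc_n, \O_n ; \Cb_\d)$. Your more explicit tracking of the on-the-nose identification through Corollary \ref{UCW1} and of the domain issue for $\Phi_{\rm d}$ simply spells out what the paper's terser argument leaves implicit.
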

 
\begin{rem}  \label{Rem:VVOviaU} 
{\em As a final remark, paralleling Remark \ref{Rem:VVOviaT}, we note that  
the cocycles $\Phi_{\rm d}(C_{I, J} (\nb))$ are also ``economically'' constructed. Indeed, 
 their action on tensor products of
 monomials of the form $a = f \, U^*_\phi \in \Ac$ only involves
 the vector fields $X_k, Y^i_j$ applied to the
 function $f \in C_c^\infty (F\Rb^n)$
 and the operators $\d^i_{j k}$ applied to the diffeomorphism $\phi \in \Gb$.  
 The vector fields appear because  
  \begin{align*}
 df \, = \, \sum_{k=1}^n X_k (f) \t^k \, + \, \sum_{i, j=1}^n Y^i_j(f) \, \om^j_i \, ,
 \end{align*}
while the multiplication operators $\{\d^i_{j k}\}$ show up because of the conjugation relation
 \begin{align*}
U_\phi\, df \,U^*_\phi\, = \, \sum_{k=1}^n (X_k (f)\circ \phi)\,  \t^k \, + \, 
\sum_{i, j=1}^n (Y^i_j(f)\circ \phi)  \, ({\om}^i_j \, +\,  \g_{jk}^i (\phi)  \,{\t}^k) .
 \end{align*}
Since the characteristic map \eqref{char-map} is known to be faithful, it follows
that the Hopf cyclic cocycles $c_{I, J} (\nb) \in \sum_{q \geq 0} \Hc_n^{\ot^q}$
(resp. $\sum_{q \geq 0} \left(\Qc_n^{\ot^q}\right)^{\O_n}$)
have their tensorial components made out of the basic generators
 $X_k, Y^i_j$ and $\d^i_{j k}$, and do not involve any $\d^i_{j k \ldots}$
 operators of higher order.}
 \end{rem}
 
 \bigskip
 %%%%%%%%%%%%%%%%%%%%%%%%%%%%%%%%%% 

%%%%%%%%%%%%%%%%%%%%%%%%%%%%%%%%%

\begin{thebibliography}{9}
 
\bibitem{Bott*}
 Bott, R., {On characteristic classes in the framework of Gelfand-Fuks cohomology}.
 In  \textit{Colloque ``Analyse et Topologie'' en l'Honneur de Henri Cartan},
  Ast\'erisque {\bf 32-33} (1976), p.  113--139. Soc. Math. France (Paris).
 
 \bibitem{Bott-LNM}
 Bott, R., {On some formulas for the characteristic classes of group-actions}. In
 \textit{Differential topology, foliations and Gelfand-Fuks cohomology}, p. 25--61
 Lecture Notes in Mathematics, vol. {\bf 652}, Springer-Verlag, Berlin, 1978.

 \bibitem{BottHaef}
 Bott, R. and Haefliger, A., {On characteristic classes of $\Gamma$-foliations},
 \textit{Bull. Amer. Math. Soc.}, {\bf 78} (1972), 1039--1044.

 \bibitem{BSS}
 Bott, R., Shulman, H. and Stasheff, J.,  {On the de Rham theory of certain classifying
 spaces}, {\it Adv. Math.} \textbf{20}  (1976), 43--56.
 
\bibitem{Cartan}
Cartan, H., Notions d'alg\`ebre diff\'erentielle. 
In \textit{Colloque de Topologie Alg\`ebrique}, pp. 16--27, 57--71, Bruxelles, 1950.
 
 \bibitem{ChernSim}
 Chern, S.-S, Simons, J., Characteristic forms and geometric invariants,
 \textit{Ann. Math.}, {\bf 99} (1974), 48-69.

 \bibitem{Cext}
Connes, A.,  Cohomologie cyclique et foncteur $Ext^n$,
\textit{C.R. Acad. Sci. Paris}, Ser. I  Math., \textbf{296} (1983),
953-958.

\bibitem{book} 
{Connes, A.}, {\bf Noncommutative geometry},
 Academic Press, 1994.


\bibitem{CM95} 
Connes, A. and Moscovici, H., 
 The local index formula in noncommutative
geometry, {\it Geom. Funct. Anal.}, {\bf 5} (1995), 174-243.
 
\bibitem{CM98} 
Connes, A. and Moscovici, H., {Hopf algebras, cyclic
cohomology and the transverse index theorem}, \textit{Commun. Math.
Phys.} 198 (1998), 199-246.


\bibitem{CM99} 
Connes, A. and Moscovici, H., {Cyclic cohomology and Hopf
algebras}.
 \textit{Lett. Math. Phys.}  {\bf 48}  (1999),  no. 1, 97--108.

\bibitem{CM01} 
Connes, A. and Moscovici, H., Differentiable cyclic
cohomology and Hopf algebraic structures in transverse geometry,
In \textbf{Essays on Geometry and Related Topics}, pp. 217-256,
\textit{Monographie No. 38 de L'Enseignement Math\'ematique},
Gen\`eve, 2001.

 
\bibitem{CM04}
Connes, A. and Moscovici, H., 
{\em Modular Hecke algebras
and their Hopf symmetry;  Rankin-Cohen brackets
and the Hopf algebra of transverse geometry}, Moscow Math. J.
\textbf{4}  (2004), 67--109; 111--130.


 \bibitem{Dupont} 
 Dupont, J. L., {Simplicial de {R}ham cohomology and characteristic classes of
              flat bundles}.
 \textit{Topology}  {\bf 15}  (1976),  233--245.

 
 \bibitem{GF} Gelfand, I. M. and Fuks, D. B., Cohomology of the
Lie algebra of formal vector fields, \textit{Izv. Akad. Nauk SSSR}
\textbf{34} (1970), 322-337.


\bibitem{Godbillon}
Godbillon, C., Cohomologies d'alg\`ebres de Lie de champs de vecteurs formels,
\textit{S\'eminaire N. Bourbaki}, 1972-1973, exp. 421, 69-87.


\bibitem{Guel} 
Guelorget, S., Alg\`ebre de Weil du groupe lin\'raire. Application aux classes
caractŽristiques d'un feuiletage. In \textit{Differential topology and geometry}, 179--191,
 Lecture Notes in Mathematics, vol. {\bf 484} Springer-Verlag, Berlin, 1975. 

 \bibitem{HaefDC}
 Haefliger, A., Differentiable cohomology,  In
 \textit{Differential Topology -- Varenna, 1976}, pp. 19-70,
Liguori, Naples, 1979.

\bibitem{hkrs}
Hajac, P. M., Khalkhali, M., Rangipour, B. and Sommerh\"auser Y.,
{Stable anti-Yetter-Drinfeld modules.}   
{Hopf-cyclic homology and cohomology with coefficients.} \textit{C. R.
Math. Acad. Sci. Paris}  \textbf{ 338}  (2004), 587--590 and 667--672.


\bibitem{KambTond}
Kamber, F., Tondeur Ph., Foliated bundles and characteristic classes, 
Lecture Notes in Mathematics, vol. {\bf 493}, Springer-Verlag, Berlin, 1975. 

 
\bibitem{K-M-S} 
Kol\'{a}\v{r}, I. , Michor, P. and Slov\'ak, I.,
\textbf{Natural operators in differential geometry}, Springer-Verlag,
Berlin-Heidelberg, 1993.

\bibitem{MR07}
Moscovici, H., Rangipour, B.,  Cyclic cohomology of Hopf
algebras of transverse symmetries in codimension 1, \textit{ Adv.
Math.},  \textbf{210} (2007), 323--374.

\bibitem{MR09} 
Moscovici, H., Rangipour, B.,  Hopf algebras of primitive Lie pseudogroups and Hopf cyclic cohomology. {\it Adv. Math.} \textbf{ 220}  (2009), 706--790.

\bibitem{MR11} 
Moscovici, H., Rangipour, B., Hopf cyclic cohomology
and transverse characteristic classes.  {\it Adv. Math.} \textbf{ 227}  (2011), 654--729.

\bibitem{RanSut}
Rangipour, B., S\"utl\"u, S., Characteristic classes of foliations via SAYD-twisted cocycles.
arXiv:1210.5969.

\end{thebibliography}
 \end{document}